\newcommand\ZZ{{\widehat{\mathbb Z}}}
\newcommand\Z{{\mathbb Z}}
\newcommand\Q{{\mathbb Q}}
\newcommand\F{{\mathbb F}}
\newcommand\mS{{\mathbb S}}
\newcommand\K{\Bbbk}
\newcommand\N{{\mathbb N}}
\newcommand\cL{{\mathscr L}}
\newcommand\hL{\widehat{\mathscr L}}
\newcommand\cP{{\mathscr P}}
\newcommand\ra{\rightarrow}
\newcommand\ilim{\varprojlim}
\newcommand\dlim{\varinjlim}
\newcommand\Sp{\operatorname{Sp}}
\newcommand\Spec{\operatorname{Spec}}
\newcommand\aut{\operatorname{Aut}}
\newcommand\out{\operatorname{Out}}
\newcommand\lra{\longrightarrow}
\newcommand\hookra{\hookrightarrow}
\newcommand\tura{\twoheadrightarrow}
\renewcommand{\hom}{\mathrm{Hom}}
\newcommand\sr{\stackrel}
\newcommand\sst{\scriptscriptstyle}
\newcommand\cGG{\check{\GG}}
\newcommand\hGG{\widehat{\GG}}
\newcommand\hP{\widehat{\Pi}}
\newcommand\hp{\widehat{\pi}}
\newcommand\tGG{\tilde{\GG}}
\newcommand\ssm{\smallsetminus}
\newcommand\ol{\overline}
\newcommand\cG{{\mathscr G}}
\newcommand\cS{{\mathscr S}}
\newcommand\GG{\Gamma}
\newcommand\ld{\lambda}
\newcommand\Ld{\Lambda}
\newcommand\wh{\widehat}
\newcommand\td{\tilde}
\newcommand\sg{\sigma}
\newcommand\gm{\gamma}
\def\co{\colon\thinspace}
\newtheorem{theorem}{Theorem}[section]
\newtheorem{corollary}[theorem]{Corollary}
\newtheorem{proposition}[theorem]{Proposition}
\newtheorem{lemma}[theorem]{Lemma}
\theoremstyle{definition}
\newtheorem{definition}[theorem]{Definition}
\newtheorem{remark}[theorem]{Remark}
\begin{document}

\title{A restricted Magnus property\\ for profinite surface groups}
\author{Marco Boggi \and Pavel Zalesskii}
\maketitle

\begin{abstract}
Magnus \cite{Mag} proved that, given two elements $x$ and $y$ of a finitely generated free group $F$
with equal normal closures $\langle x\rangle^F=\langle y\rangle^F$, then $x$ is conjugated either to $y$
or $y^{-1}$. More recently (\cite{B-K-Z} and \cite{Bogo}), this property, called the Magnus property,
has been generalized to oriented surface groups.

In this paper, we consider an analogue property for profinite surface
groups. While Magnus property, in general, does not hold in the profinite setting, it does hold in some
restricted form. In particular, for $\cS$ a class of finite groups, we prove that, if $x$ and $y$ are \emph{algebraically simple} 
(cf.\ Definition~\ref{simple}) elements of the pro-$\cS$ completion $\hP^\cS$ of an orientable surface group $\Pi$, such that,
for all $n\in\N$, there holds $\langle x^n\rangle^{\hP^\cS}=\langle y^n\rangle^{\hP^\cS}$, then $x$ is
conjugated to $y^s$ for some $s\in(\ZZ^\cS)^\ast$. As a matter of fact, a much more general property is
proved and further extended to a wider class of profinite completions.

The most important application of the theory above is a generalisation of the description 
of centralizers of profinite Dehn twists given in \cite{boggi} to profinite Dehn multitwists.
\end{abstract}

\section{Introduction}
Let $\Pi$ be an oriented surface group, that is to say the fundamental group of an oriented 
Riemann surface of finite type $S$. 

\begin{definition}\emph{A class of finite groups} (cf. Definition~3.1 in \cite{A-M}) is a full 
subcategory $\cS$ of the category of finite groups which is closed under taking subgroups, homomorphic 
images and extensions (meaning that a short exact sequence of finite groups is in $\cS$ whenever its 
exterior terms are). We always assume that $\cS$ contains a nontrivial group. 
\end{definition}

For $\cS$ a class of finite groups, \emph{the pro-$\cS$ completion $\hP^\cS$ of $\Pi$} is 
the inverse limit of the finite quotients of $\Pi$ which belong to $\cS$. The profinite group $\hP^\cS$ is also 
called \emph{a pro-$\cS$ surface group}. 

Let us give some examples. Fixed a non-empty set of primes $\Ld$, let then $\cS$ be the category of finite
$\Ld$-groups, i.e.\ finite groups whose orders are product of primes in $\Ld$. In this case, the
corresponding profinite completion is denoted by $\hP^\Ld$ and called the \emph{pro-$\Ld$ completion
of $\Pi$}. The two cases of interest are usually when $\Ld$ is the set
of all primes, in which case $\hP^\Ld$ is just the profinite completion $\hP$ of $\Pi$, and when $\Ld$
consists of only one prime $p$, in which case $\hP^\Ld$ is the pro-$p$ completion of $\Pi$ and is
denoted by $\hP^{(p)}$. Another important example of class  of finite groups 
is the class of finite solvable groups.

The main purpose of this paper is to prove
for the pro-$\cS$ surface group $\hP^\cS$ some properties which are analogous to the
Magnus property proved for $\Pi$ in \cite{B-K-Z} and \cite{Bogo}.

For a given element $x$ of a (profinite) group $G$, let us denote by $\langle x \rangle$ and
$\langle x \rangle^G$, respectively, the (closed) subgroup and the (closed) normal subgroup
generated (topologically) by $x$ in $G$.
The Magnus property for the discrete group $\Pi$ says that if, for two given elements
$x,y\in\Pi$, the normal subgroup $\langle x\rangle^{\Pi}$
equals the normal subgroup $\langle y\rangle^{\Pi}$, then $x$ is conjugated either to $y$ or $y^{-1}$.
This property cannot be transported literally to the profinite case since $\widehat \Z$ has
more units than just $\{\pm 1\}$ and so the property would fail already for $\widehat \Z$.  Moreover,
even if we take this into account, there are counterexamples to the analogue property which can be
formulated for the profinite completion $\hP^\cS$.

A counterexample for a free profinite group of finite rank is the following.
Let us denote by $M(G)$ the intersection of all maximal normal subgroups of a group $G$.
Let then $U$ be a normal subgroup of a finitely generated free profinite group $F$ such that $U/M(U)$
is a direct product of non-abelian simple groups (for instance, let $U$ be the kernel of the natural
epimorphism of $F$ onto the maximal prosolvable quotient of $F$).

By Proposition 8.3.6 in Chapter 8 of \cite{R-Z}, the subgroup $U$ is the
normal closure of an element $u\in U$ if and only if $U/M(U)$ is
the normal closure of $u\cdot M(U)$. Now, in an infinite direct product
of non-abelian simple groups, there are plenty of elements and
groups generated by them which are non-conjugate but normally
generate $U/M(U)$. For instance, any element with non-trivial
projection to every direct simple factor of $U/M(U)$ has this
property. However, having different order in some of the projections,
such elements are not conjugate.

A counterexample for the pro-$p$ case is instead the following.
Let $F=F(x,y)$ be the free pro-$p$ group in two generators. Then, the normal closure of $x$ has
generators which are not conjugated to powers of $x$. Indeed, this follows from the fact that
$\langle x\rangle^F$, modulo its Frattini subgroup, identifies with the completed group ring
$\F_p[[\langle y\rangle]]$ and this has more units than just the powers of $y$.

For this reason, the profinite analogue of Magnus property should be rather formulated saying
that, if $x,y\in\hP^\cS$ satisfy the stronger condition
$\langle x^n\rangle^{\hP^\cS}=\langle y^n\rangle^{\hP^\cS}$, for all $n$ which are a product of primes
in $\Ld_\cS$, then $x$ is conjugated to a power $y^s$, where $s$ is a unit of the standard pro-$\cS$ 
cyclic group $\ZZ^\cS$, i.e.\ the pro-$\cS$ completion of $\Z$. The latter group is more explicitly described 
as follows. Let $\Ld_\cS$ be the set of primes which occur as orders of groups in $\cS$, 
there is then a natural isomorphism $\ZZ^\cS\cong\prod_{p\in\Ld_\cS}\Z_p$. 

A first instance of the profinite analogue of Magnus property for a free profinite group of finite rank follows 
from a deep theorem of Wise (cf.\ \S \ref{free}), but with the further restriction that one of the two elements 
be abstract:

\begin{theorem}\label{wise}Let $\widehat F$ be a free profinite group of finite rank and $x,y\in \widehat F$, with $y$ an
element contained in an abstract dense free subgroup $F$ of $\wh{F}$. If, for all $n\in\N^+$,
there is a $k_n\in\N^+$ such that $x^{k_n}\in\langle y^n\rangle^{\widehat F}$, then $x$ is
conjugated to $y^s$ for some $s\in \widehat\Z$.
\end{theorem}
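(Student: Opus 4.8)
The plan is to descend to the one-relator quotients $\Gamma_n:=F/\langle y^n\rangle^F$ for $n\geq 2$ and exploit their geometry through Wise's theorem. After the routine reduction to the case where $y$ is not a proper power in $F$ (so that for $n\geq 2$ the word $y^n$ is a genuine proper power and $\Gamma_n$ is a \emph{one-relator group with torsion}), I would first record the classical structure of such groups: the image $\ol y$ of $y$ has order exactly $n$ in $\Gamma_n$, and every element of finite order in $\Gamma_n$ is conjugate to a power of $\ol y$. The deep input of Wise is that $\Gamma_n$ is virtually special, hence residually finite, conjugacy separable, and possessing finitely many conjugacy classes of finite subgroups, each of which is separable.

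From residual finiteness of $\Gamma_n$ one gets that $\langle y^n\rangle^F$ is closed in the profinite topology of $F$, so that the kernel of $\widehat F\tura\widehat{\Gamma_n}$ is exactly $\langle y^n\rangle^{\widehat F}$; write $G_n:=\widehat F/\langle y^n\rangle^{\widehat F}\cong\widehat{\Gamma_n}$. The hypothesis $x^{k_n}\in\langle y^n\rangle^{\widehat F}$ says precisely that the image $\ol x_n$ of $x$ in $G_n$ is a torsion element (of order dividing $k_n$), hence generates a finite cyclic subgroup. The key step is to promote the discrete statement to $G_n$: using that $\Gamma_n$ is a virtually special hyperbolic group, every finite subgroup of the profinite completion $G_n=\widehat{\Gamma_n}$ is conjugate to the image of a finite subgroup of $\Gamma_n$, and the abstract finite subgroups are conjugate into $\langle\ol y\rangle$ by the classical structure. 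Therefore $\ol x_n$ is conjugate in $G_n$ to a power of $\ol y$, necessarily of order dividing $n$.

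It remains to assemble these level-wise conjugacies into a single one. For each open normal $U\triangleleft\widehat F$, taking $n$ to be a multiple of the order of the image of $y$ in $\widehat F/U$ makes $\widehat F/U$ a quotient of $G_n$, so $x$ is conjugate to a power of $y$ modulo $U$; the exponent is well defined modulo the order of $\ol y$, i.e.\ in the procyclic group $\ol{\langle y\rangle}\cong\ZZ$ (here $y$ has infinite order in $F$, and cyclic subgroups of free groups are separable, so the closure of $\langle y\rangle$ is $\ZZ$). The sets of admissible pairs (conjugator coset, exponent) at each finite level are nonempty, finite, and compatible under the projections, so their inverse limit is nonempty; a coherent choice produces $g\in\widehat F$ and $s\in\ZZ$ with $x=g\,y^s\,g^{-1}$.

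The main obstacle is the passage from the discrete to the profinite statement in the second paragraph, namely showing that a profinite torsion element of $\widehat{\Gamma_n}$ cannot escape the conjugacy classes of the finite subgroups of $\Gamma_n$. This is exactly where virtual specialness (Wise) is indispensable: it supplies conjugacy separability together with separability of the finitely many conjugacy classes of finite subgroups, and these force every finite subgroup of $\widehat{\Gamma_n}$ to be subconjugate to $\langle\ol y\rangle$. A secondary point to handle with care is the reduction when $y$ is a proper power $y=z^d$: there $\ol x_n$ is a priori only conjugate to a power of $\ol z$, and one must check, via the abelianized divisibility constraint coming from the hypothesis for \emph{all} $n$, that the resulting exponent lies in $d\,\ZZ$, so that $x$ is genuinely conjugate to a power of $y$ rather than merely of $z$.
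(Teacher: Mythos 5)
Your overall skeleton --- pass to the one-relator quotients $\Gamma_n=F/\langle y^n\rangle^F$, show that the image of $x$ there is conjugate to a power of $\bar y$, then run a compactness/inverse-limit argument --- is the same as the paper's (its Theorem~\ref{relator} plus a three-line deduction). The genuine gap is exactly the step you yourself call the main obstacle. You assert that conjugacy separability of $\Gamma_n$, together with separability of its finitely many conjugacy classes of finite subgroups, \emph{forces} every finite subgroup of $\widehat{\Gamma}_n$ to be subconjugate to $\overline{\langle \bar y\rangle}$. That inference is not valid: both separability properties are statements about elements and subgroups of the discrete group $\Gamma_n$ (they say that conjugacy classes, respectively the torsion set, of $\Gamma_n$ are closed in the profinite topology \emph{of $\Gamma_n$}); they give no control over a torsion element $t\in\widehat{\Gamma}_n$ that is not conjugate into $\Gamma_n$. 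Such a $t$ is a limit of elements of $\Gamma_n$ of infinite order, and what your inverse-limit argument needs --- that the image of $t$ in every finite quotient of $\Gamma_n$ be conjugate to a power of the image of $\bar y$ --- is precisely the statement to be proved, not a formal consequence of separability. That some further input is unavoidable is shown by the existence of finitely generated, residually finite, torsion-free groups whose profinite completions contain torsion: separability-type properties of the discrete group do not by themselves prevent ``exotic'' torsion in the completion. The paper supplies the missing input by a different mechanism: Wise's theorem enters not through conjugacy separability but through quasiconvexity of the Magnus--Moldavanskii hierarchy and the resulting separability of the subgroups occurring in it, which guarantees that the profinite completion functor carries the hierarchy to a hierarchy of profinite HNN-extensions, $\widehat{G}'=HNN(\widehat H,\widehat M,t)$; then the Zalesskii--Melnikov theorem (a torsion element of a profinite HNN-extension is conjugate into the base group), applied inductively along the hierarchy, shows that every torsion element of $\widehat{\Gamma}_n$ is conjugate to an element of $\Gamma_n$, after which the classical discrete result applies. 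Some genuinely profinite tool of this kind (profinite Bass--Serre theory, or cohomological goodness plus a Serre-type argument) is indispensable here, and your proposal contains none.

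A secondary problem: your reduction for $y=z^d$ a proper power cannot be completed as described. The hypothesis for all $n$ does \emph{not} yield the divisibility $s\in d\,\widehat\Z$: taking $x=z$, $y=z^2$ and $k_n=2n$, the hypothesis $x^{k_n}\in\langle y^n\rangle^{\widehat F}$ holds for every $n$, yet $z$ is conjugate to no power $(z^2)^s$ with $s\in\widehat\Z$, since in the abelianization this would force $1=2s$, which has no solution in $\widehat\Z$. What the method actually proves --- in your version as well as in the paper's --- is that $x$ is conjugate to $\rho^s$, where $\rho$ is the root of $y$ in $F$; so the statement has to be read with $y$ not a proper power (or with the conclusion formulated for the root).
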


We do not know in which generality the Magnus property holds for profinite surface groups.
In what follows, we will restrict to the case where one of the two elements satisfies some geometric
conditions similar to those considered in \cite{B-K-Z}.

\begin{definition}\label{simple}\begin{enumerate}
\item Let us fix a presentation of $\Pi$ as the fundamental group of a Riemann surface $S$.
A subset of non-trivial elements $\sg=\{\gm_1,\ldots,\gm_h\}\subset\Pi$ is \emph{simple} if there is
a set of disjoint simple closed curves (\emph{briefly, s.c.c.'s}) $\tilde{\sg}=\{\td{\gm}_1,\ldots,\td{\gm}_h\}$
on $S$, such that they are two by two non-isotopic and $\td{\gm}_i$ belongs to the free
homotopy class of $\gm_i$ for $i=1,\ldots,h$. A s.c.c.\ on $S$ is \emph{peripheral}
if it bounds a $1$-punctured disc.

\item Let $\cS$ be a class of finite groups. \emph{A pro-$\cS$ surface group} $\hP^\cS$ is the pro-$\cS$ 
completion of a surface group $\Pi$. It is endowed with a natural monomomorphism 
with dense image $\Pi\hookra\hP^\cS$. Let $\GG$ be the group of mapping classes of self-homeomorphisms 
of $S$ fixing the base point of $\Pi$ and let $\hGG^\cS$ be its closure in $\aut(\hP^\cS)$. Then, a subset
of elements $\sg=\{\gm_1,\ldots,\gm_h\}\subset\hP^\cS$ is \emph{simple} if it is in the orbit of the image
of a simple set $\sg'\subset\Pi$ for the action of $\hGG^\cS$.
\item A subset of elements $\sg=\{\gm_1,\ldots,\gm_h\}\subset\hP^\cS$ is \emph{algebraically simple} if it is
in the $\aut(\hP^\cS)$--orbit of the image of a set $\sg'\subset\Pi$ which is simple for some presentation
of $\Pi$ as the fundamental group of a Riemann surface.
\end{enumerate}
\end{definition}

\begin{remark}\label{pro-p case}
Let $\Pi$ be a free group of rank $n$ and $\hP^\cS$ either its pro-$\Ld$ completion, for some non-empty 
set of primes $\Ld$, or its pro-solvable completion. Then, given a minimal set $\{\alpha_1,\ldots,\alpha_n\}$
of topological generators for $\hP^\Ld$, any element of this set and any product of commutators
$\prod_{i=1}^k[\alpha_i,\alpha_{n-i}]$ and of commutators and generators
of the form $\prod_{i=1}^k[\alpha_i,\alpha_{n-i}]\alpha_{i+1}\alpha_{i+2}\ldots\alpha_j$, for
$1\leq k\leq [n/2]$ and $i+1\leq j\leq n-k-1$, is algebraically simple.
\end{remark}

Part (i) of the above definition can be rephrased, group-theoretically, saying that there is a graph of
groups $\cG$, whose vertex groups are finitely generated free groups of rank at least $2$,
together with an isomorphism $\pi_1(\cG)\cong\Pi$ which identifies the set of edge groups of $\cG$ with
the set of cyclic subgroups of $\Pi$ generated by non-peripheral elements of $\sg$.

Part (iii) just says that, modulo automorphisms, the profinite group $\hP^\cS$ can be realized as a 
profinite completion of a discrete group $\Pi$ of the above type.

In this setting, we are actually going to prove (cf.\ \S \ref{geometric proof}) the following
stronger statement:

\begin{theorem}\label{magnus}Let $\sg=\{\gm_1,\ldots,\gm_h\}\subset\hP^\cS$ be an algebraically
simple subset and let us denote by $\sg_n^{\hP^\cS}$, for
$n\in\N^+$, the closed normal subgroup of $\hP^\cS$ generated by $n$-th powers of elements
of $\sg$. Let $y\in\hP^\cS$ be an element such that, for all $n$ a product of
primes in $\Ld_\cS$, there exists a $k_n\in\N^+$ with the property
that $y^{k_n}\in\sg_n^{\hP^\cS}$. Then, for some $s\in\ZZ^\cS$ and $i\in\{1,\ldots,h\}$,
the element $y$ is conjugated to the element $\gm_i^s$.
\end{theorem}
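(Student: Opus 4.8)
The plan is to deduce the statement from a structural analysis of the finite quotients of $\hP^\cS$ in which all the $\gm_i$ have been made torsion. Since both the hypothesis and the conclusion are equivariant under $\aut(\hP^\cS)$, I would first use that $\sg$ is absolutely simple to replace it by the image of a genuinely simple set $\sg'\subset\Pi$: if the theorem holds for $\sg'$ and for $y':=\phi^{-1}(y)$, where $\phi\in\aut(\hP^\cS)$ carries $\sg'$ to $\sg$, then applying $\phi$ gives the conclusion for $\sg$ and $y$, because $\phi$ maps the normal closure of the $n$-th powers of $\sg'$ onto $\sg_n^{\hP^\cS}$. Next I would reformulate the hypothesis: writing $q_n\co\hP^\cS\tura Q_n:=\hP^\cS/\sg_n^{\hP^\cS}$, the condition $y^{k_n}\in\sg_n^{\hP^\cS}$ says exactly that $q_n(y)$ is a torsion element of $Q_n$, for every $n$ that is a product of primes in $\Ld_\cS$. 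Finally, I would record the elementary reduction that every finite quotient is dominated by some $Q_n$: given $\pi\co\hP^\cS\tura P$ with $P\in\cS$, the order $n:=|P|$ is a product of primes in $\Ld_\cS$, whence $\pi(\gm_i)^n=1$ for all $i$, so $\sg_n^{\hP^\cS}\subseteq\ker\pi$ and $\pi$ factors through $Q_n$.

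The heart of the argument is the following claim, to be proved for each admissible $n$: every torsion element of $Q_n$ is conjugate to a power of some $q_n(\gm_i)$. Here I would bring in profinite Bass--Serre theory. The simple set $\sg'$ furnishes a finite graph of groups $\cG$ decomposing $\Pi$, with free vertex groups of rank $\ge 2$ and cyclic edge groups generated by the non-peripheral $\gm_i$; this yields an action of $\hP^\cS$ on a profinite tree $T$, with edge stabilisers the closed cyclic subgroups $\langle\gm_i\rangle\cong\ZZ^\cS$. I would then identify $Q_n$ with the pro-$\cS$ fundamental group of the quotient graph of groups, acting on the quotient tree $T_n:=T/\sg_n^{\hP^\cS}$: the edge groups become $\langle q_n(\gm_i)\rangle\cong\Z/n$ and the vertex groups become pro-$\cS$ $2$-orbifold groups (free groups modulo the normal closure of the $n$-th powers of their boundary curves). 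Granting this structure, the claim follows in two standard steps: a torsion element is elliptic, hence conjugate into a vertex stabiliser (finite subgroups of the fundamental group of a proper profinite graph of groups are conjugate into vertex groups), and inside a pro-$\cS$ $2$-orbifold group every torsion element is conjugate to a power of a cone-point generator, i.e.\ of a boundary curve; since each such boundary curve is one of the $\gm_i$, we are done. The peripheral $\gm_i$ are accounted for in the same way, as cone points rather than edges.

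I expect the main obstacle to lie precisely in the identification of $Q_n$ with this graph of pro-$\cS$ groups, that is, in showing that passing to the pro-$\cS$ completion is exact on the decomposition $\cG$: one must know that the vertex and edge groups embed as closed subgroups with the expected intersections, that each $\langle\gm_i\rangle$ is closed and isomorphic to $\ZZ^\cS$, and that the resulting profinite graph of groups is proper, so that $\hP^\cS$ and $Q_n$ genuinely act on the profinite trees $T$ and $T_n$ with the stated stabilisers. This is where the good (in Serre's sense) behaviour and efficiency of surface and orbifold groups, together with the injectivity results for profinite completions of graphs of groups, must be invoked; the profinite $2$-orbifold torsion statement is the other delicate point, which I would either reduce to the same tree formalism applied to the orbifold vertex group or cite in its known pro-$\cS$ form.

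It remains to pass from the per-$n$ claim to a single conjugacy in $\hP^\cS$. For a fixed index $i$, set $K_i:=\{\,g\gm_i^s g^{-1}: g\in\hP^\cS,\ s\in\ZZ^\cS\,\}$; as the continuous image of the compact space $\hP^\cS\times\ZZ^\cS$ it is closed, and $y\in K_i$ if and only if $\pi(y)\in\pi(K_i)$ for every finite quotient $\pi\co\hP^\cS\tura P$. By the reduction above, each such $\pi$ factors through some $Q_n$, so the per-$n$ claim gives that $\pi(y)$ is conjugate to a power of $\pi(\gm_{i(P)})$ for some index $i(P)\in\{1,\ldots,h\}$. Since this index set is finite and the property ``$\pi(y)$ is conjugate to a power of $\pi(\gm_i)$'' is inherited by coarser quotients, a routine cofinality argument over the directed system of finite quotients produces a single index $i$ that works for a cofinal, hence for every, finite quotient. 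For this $i$ one has $\pi(y)\in\pi(K_i)$ for all $\pi$, whence $y\in K_i$; that is, $y$ is conjugate to $\gm_i^s$ for some $s\in\ZZ^\cS$, as required.
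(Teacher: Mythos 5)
Your proposal follows the same architecture as the paper's own proof: reduce by $\aut(\hP^\cS)$-equivariance to a simple set $\sg'\subset\Pi$, identify $Q_n=\hP^\cS/\sg_n^{\hP^\cS}$ with the pro-$\cS$ completion of the nodal Fuchsian group $\Pi/\sg_n^{\Pi}$ (the fundamental group of the orbifold $S_n$ obtained by coning off the curves with degree-$n$ discs), show that torsion in $Q_n$ is conjugate into the cyclic cone subgroups $\langle q_n(\gm_i)\rangle$ via profinite Bass--Serre theory plus a statement about torsion in pro-$\cS$ orbifold groups, and finish with an inverse limit/compactness argument (your closed-set formulation of that last step is fine, indeed more explicit than the paper's). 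The gap is that everything you defer to ``invoking'' structure theory --- that the vertex and edge groups of $(\widehat{\cG}^\cS_n,Y)$ embed in $Q_n$, that finite subgroups of $Q_n$ are conjugate into vertex groups, and that torsion in a pro-$\cS$ orbifold group is conjugate into a cone subgroup --- is available \emph{only} under a hypothesis you never verify and do not correctly identify: that the nodal Fuchsian group $\Pi/\sg_n^{\Pi}$ contains a torsion-free normal subgroup $H$ with quotient in $\cS$ (equivalently, that $S_n$ admits a representable normal $\Ld_\cS$-covering). This is not automatic for a general class $\cS$: amalgams of residually-$\cS$ groups over finite subgroups need not be residually-$\cS$, and without such an $H$ the cone subgroups $\Z/n$ need not even inject into the pro-$\cS$ completion, so the whole graph-of-groups picture of $Q_n$ can collapse. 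The paper spends roughly half of its proof on exactly this point: the abelian $\Ld_\cS$-cover attached to $[\Pi,\Pi]\Pi^n$ restricts with degree $n$ on non-separating and (when $S$ has at least two punctures) peripheral curves of $\sg$, but it has degree $1$ along \emph{separating} curves, whose homology classes vanish; one must pass to a further metabelian cover, using that preimages of separating curves become homologically non-trivial (Lemma~3.10 of \cite{sym}), to obtain a representable $S_n''\ra S_n$. Your appeal to ``goodness and efficiency'' does not substitute for this construction; goodness enters the paper elsewhere, namely in proving $p$-acyclicity of the $\cS$-solenoid.

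A secondary weakness: your fallback of proving the pro-$\cS$ orbifold torsion statement ``by the same tree formalism applied to the vertex group'' fails whenever a vertex orbifold of $S_n$ is closed (a component of $S\ssm\sg$ all of whose ends come from $\sg$), since a closed $2$-orbifold group admits no splitting with finite edge groups. The paper handles precisely this case with profinite Smith theory (Scheiderer) applied to the action on the $\cS$-solenoid --- this is its independent proof of Mochizuki's Lemma~2.11. Citing that lemma ``in its known pro-$\cS$ form'' is legitimate in principle, but it carries the same virtual-torsion-freeness hypothesis, so the citation cannot be made before the covering construction above is carried out.
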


An immediate corollary of Theorem~\ref{magnus} is the restricted Magnus property for
profinite surface groups, mentioned above:

\begin{corollary}\label{restricted}Let $x\in\hP^\cS$ be an algebraically simple element and $y\in\hP^\cS$
an arbitrary element. Let us denote by $\langle x^n\rangle^{\hP^\cS}$ and $\langle y^n\rangle^{\hP^\cS}$
the closed normal subgroups of $\hP^\cS$ generated respectively by
$x^n$ and $y^n$, for $n\in\N^+$. If, for all $n$ a product of
primes in $\Ld_\cS$, there holds $\langle x^n\rangle^{\hP^\cS}=\langle y^n\rangle^{\hP^\cS}$, then, for
some $s\in(\ZZ^\cS)^\ast$, the element $y$ is conjugated to $x^s$. 
\end{corollary}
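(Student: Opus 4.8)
The plan is to derive Corollary~\ref{restricted} directly from Theorem~\ref{magnus} by applying it in both directions. The key observation is that the hypothesis $\langle x^n\rangle^{\hP^\cS}=\langle y^n\rangle^{\hP^\cS}$ is symmetric in $x$ and $y$, and that the singleton $\sg=\{x\}$ is an absolutely simple subset by hypothesis.

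First I would apply Theorem~\ref{magnus} with $\sg=\{x\}$. For each $n$ a product of primes in $\Ld_\cS$, the normal subgroup $\sg_n^{\hP^\cS}$ is exactly $\langle x^n\rangle^{\hP^\cS}=\langle y^n\rangle^{\hP^\cS}$, so in particular $y^n\in\langle y^n\rangle^{\hP^\cS}=\sg_n^{\hP^\cS}$; thus the hypothesis of Theorem~\ref{magnus} holds with $k_n=n$. The theorem then yields some $s\in\ZZ^\cS$ and a conjugate relation $y=g\,x^s\,g^{-1}$ for some $g\in\hP^\cS$. It remains only to show that $s$ is a \emph{unit}, i.e.\ $s\in(\ZZ^\cS)^\ast$.

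To pin down $s\in(\ZZ^\cS)^\ast$, I would run the same argument with the roles of $x$ and $y$ reversed, which is legitimate precisely because the hypothesis $\langle x^n\rangle^{\hP^\cS}=\langle y^n\rangle^{\hP^\cS}$ is symmetric and $\{y\}$, being conjugate to $\{x^s\}$, is also absolutely simple. This produces some $t\in\ZZ^\cS$ with $x$ conjugated to $y^t$. Combining the two relations, $x$ is conjugated to $x^{st}$, and I would then invoke the fact that the cyclic pro-$\cS$ subgroup $\langle x\rangle\cong\ZZ^\cS$ is self-normalizing up to the relevant conjugacy constraints (equivalently, that a nontrivial element of a pro-$\cS$ surface group is not conjugate to a proper power of itself other than by a unit exponent). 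This forces $st=1$ in $\ZZ^\cS$, whence $s\in(\ZZ^\cS)^\ast$ with inverse $t$.

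The main obstacle I expect is the final step, namely ruling out $st\neq 1$: one must exclude that a nontrivial element $x$ of $\hP^\cS$ is conjugate to $x^{st}$ for some non-unit $st\in\ZZ^\cS$. For absolutely simple $x$ this should follow from the structure of $x$ as (a unit power of) a simple closed curve, whose centralizer in $\hP^\cS$ is the procyclic group $\langle x\rangle$ and which therefore admits no nontrivial conjugation onto a proper power; concretely one reduces, via the graph-of-groups description recalled after Definition~\ref{simple}, to the corresponding statement for the cyclic edge group $\ZZ^\cS$, where $x$ conjugate to $x^{st}$ with $x\neq 1$ forces $st=1$. Once this rigidity is in hand, the corollary follows immediately.
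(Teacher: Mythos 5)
Your first step is exactly the intended one: applying Theorem~\ref{magnus} with $\sg=\{x\}$ and $k_n=n$ is legitimate, since $y^n\in\langle y^n\rangle^{\hP^\cS}=\langle x^n\rangle^{\hP^\cS}=\sg_n^{\hP^\cS}$, and it yields $y$ conjugate to $x^s$ for some $s\in\ZZ^\cS$. The genuine gap is in your second step. Theorem~\ref{magnus} can only be applied to $\sg=\{y\}$ if $\{y\}$ is absolutely simple, but the corollary assumes $y$ is an \emph{arbitrary} element, and your justification --- that $y$ is absolutely simple because it is conjugate to $x^s$ --- is circular: $x^s$ is absolutely simple only when $s$ is a unit, which is precisely what you are trying to prove. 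Indeed, for a non-unit $s$ the element $x^s$ is in general \emph{not} absolutely simple: for instance, if $x$ is a non-separating simple element then its image in $H_1(\hP^\cS,\ZZ^\cS)$ is primitive, whereas the image of $x^p$ is divisible by $p$, and no automorphism of $\hP^\cS$ can carry a simple element of $\Pi$ (whose homology class is primitive or trivial) onto it; the case $s=0$ is excluded for the same kind of reason. So the reverse application of the theorem, and with it the element $t$ and the product $st$, is not available. (Your final rigidity claim --- $x$ conjugate to $x^{st}$ with $x\neq 1$ forces $st=1$ --- is also not something you can cite from "the centralizer is procyclic"; it is a malnormality-type statement, which does hold, but needs Lemma~\ref{absolute2}(iii)--(iv) at each finite level plus an inverse limit argument, as in Proposition~\ref{normalizers}.)

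The unit-ness of $s$ can instead be obtained directly, without ever applying the theorem to $y$, which is presumably why the paper calls the corollary immediate. Since conjugate elements have equal normal closures, the hypothesis at $n=1$ gives $\langle x\rangle^{\hP^\cS}=\langle y\rangle^{\hP^\cS}=\langle x^s\rangle^{\hP^\cS}$. If $s$ were not a unit of $\ZZ^\cS\cong\prod_{p\in\Ld_\cS}\Z_p$, some component $s_p$ would lie in $p\Z_p$, so that $x^s\in\langle x^p\rangle$ and hence $\langle x\rangle^{\hP^\cS}=\langle x^s\rangle^{\hP^\cS}\subseteq\langle x^p\rangle^{\hP^\cS}$; in particular the image of $x$ in $\hP^\cS/\langle x^p\rangle^{\hP^\cS}$ would be trivial. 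But, as established in the proof of Theorem~\ref{magnus}, this quotient is the pro-$\cS$ completion of the Fuchsian group $\Pi/\langle x^p\rangle^\Pi$, which satisfies the hypotheses of Lemma~\ref{absolute2}; in particular its decomposition group, the cyclic group of order $p$ generated by the image of $x$, embeds into the pro-$\cS$ completion. Thus the image of $x$ has order exactly $p$ there, a contradiction, and $s\in(\ZZ^\cS)^\ast$ follows.
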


The restricted Magnus property has applications to Grothendieck-Teichm\"uller theory. In order to show this,  
we need more definitions. A simple closed curve on a hyperbolic Riemann surface $S$ is described 
by an unordered pair $\{\gm,\gm^{-1}\}$ in the set of conjugacy classes $\Pi/\!\sim$ of elements of the fundamental group $\Pi$ of $S$. 
Let $\cL$ be the set of non-peripheral simple closed curves on $S$. We then define the set of (non-peripheral) 
\emph{profinite simple closed curves} $\hL$ as the closure of $\cL$ in the profinite set of pairs $\cP_2(\hP/\!\sim)$ 
in the set of conjugacy classes $\hP/\!\sim$. In Theorem~4.2 \cite{boggi}, it was proved that the profinite set $\hL$ parameterizes the
set of \emph{profinite Dehn twists} in the procongruence Teichm\"uller group $\cGG(S)$ associated to $S$ (cf.\ Section~\ref{linearization}). 
This is the completion of the mapping class group $\GG(S)$ associated to the surface $S$ with respect to the congruence topology.

For $K$ a normal open subgroup of $\hP$, let $p_K\co S_K\to S$ be the associated covering with covering transformation group
$G_K:=\hP/K$. We can naturally associate to an element $\gm\in\hL$ a subspace $V_{K,\gm}$ of $H_1(\ol{S}_K,\Q_\ell)$,
where $\ol{S}_K$ is the closed Riemann surface obtained from $S_K$ filling in the punctures, in the following way. 
Let us also denote by $\gm\in\hP$ an element in the class of $\gm\in\hL$ and let $\gm^{\nu_K}$ be the smallest positive power of $\gm$ contained in $K$.
We let then $V_{K,\gm}$ be the subspace of $H_1(\ol{S}_K,\Q_\ell)$ generated by the $G_K$-orbit of the image of $\gm^{\nu_K}$
in $H_1(\ol{S}_K,\Q_\ell)$.

The main result of Section~\ref{linearization} (cf.\ Theorem~\ref{embedding}) is that the sets $\{V_{K,\gm}\}_{K\lhd_o\hP}$ separate elements of $\hL$. 
This is a non-trivial result even if restricted to the subset of simple closed curves $\cL$ of $\hL$. For $\cL$, this result was proved, with different 
techniques, in \cite{scc} (cf.\  Theorem~5.1 therein).

The above results are then used to generalize the description of centralizers of profinite Dehn twists in the procongruence Teichm\"uller group $\cGG(S)$,
given in \cite{boggi}, to profinite multitwists. A multitwist in the mapping class group $\GG(S)$ is the product of a set of Dehn twists along 
disjoint simple closed curves on $S$. We show, in particular, that the centralizer in $\cGG(S)$ of a multitwist of $\GG(S)$ is the closure 
of the centralizer of the same element in $\GG(S)$. The result we prove is actually stronger but we refer to Section~\ref{centralizers} 
for the precise statement. So far, this is probably the main application of the restricted Magnus property.

In Section~\ref{faith}, we give a linear version of some classical faithfulness results on Galois representations associated to projective hyperbolic 
curves over number fields which appeared in \cite{H-M} and \cite{boggi}.

\section{The proof of Theorem~\ref{wise}}\label{free}
In order to prove Theorem~\ref{wise}, we need a preliminary result of independent interest,
which follows from the work of Wise \cite{Wise-qc-h}:

\begin{theorem}\label{relator} Let $G=\langle F\mid r^n\rangle$ be a one-relator group with torsion and
let $y\in \widehat G$ be a torsion element of its profinite completion. Then, $y$ is conjugate 
in $\widehat G$ to a power of the image $\bar r$ of $r$ in $G$.
\end{theorem}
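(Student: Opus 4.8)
The plan is to reduce the statement to its classical discrete counterpart and then transport that across the profinite completion using the deep structural results of Wise. Recall first the classical torsion theorem for one-relator groups with torsion: for $G=\langle F\mid r^n\rangle$ with $n\geq 2$, the image $\bar r$ has finite order and \emph{every} element of finite order in the discrete group $G$ is conjugate to a power of $\bar r$; equivalently, the maximal finite subgroups of $G$ are precisely the conjugates of the finite cyclic group $\langle\bar r\rangle$. Thus the theorem is known for $G$ itself, and the entire difficulty is to show that no \emph{new} conjugacy classes of torsion appear in passing to $\widehat G$ along the dense embedding $G\hookrightarrow\widehat G$.

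The structural input is Wise's theorem \cite{Wise-qc-h}: since one-relator groups with torsion are hyperbolic and carry a quasi-convex hierarchy, $G$ is virtually compact special. I would extract three consequences. First, $G$ is virtually torsion-free, so I fix a torsion-free finite-index normal subgroup $G_0\trianglelefteq G$ with finite quotient $Q=G/G_0$. Second, $G_0$ is cohomologically good in Serre's sense and of finite cohomological dimension; combining goodness with $\mathrm{cd}(G_0)<\infty$ forces the profinite completion $\widehat{G_0}$ to have finite cohomological dimension at every prime, hence to be \emph{torsion-free}, and since $G_0$ is of finite index one obtains an exact sequence $1\to\widehat{G_0}\to\widehat G\to Q\to 1$ with $\widehat{G_0}$ open, normal, and torsion-free. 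Third, $G$ is (hereditarily) conjugacy separable and its finite subgroups $\langle\bar r\rangle^g$ are separable. Now, given a torsion element $y\in\widehat G$, triviality of $\langle y\rangle\cap\widehat{G_0}$ shows that $\langle y\rangle$ is a finite cyclic subgroup of $\widehat G$ projecting injectively into $Q$, with order bounded by $|Q|$.

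The heart of the argument, and the step I expect to be the main obstacle, is to conjugate the finite subgroup $\langle y\rangle\leq\widehat G$ into the image of a finite subgroup of $G$; once this is achieved the classical theorem, together with separability of $\langle\bar r\rangle$, identifies it with a conjugate of a power of $\bar r$ and completes the proof. The natural way to carry this out is to project $y$ into the finite quotients $G/U$, where $U$ runs over the open normal subgroups of $\widehat G$ meeting $G$ in finite-index normal subgroups, obtaining a torsion element in each $G/U$ that is conjugate there to a power of the image of $\bar r$, and then to upgrade these compatible ``local'' conjugations into a single conjugating element of $\widehat G$ by a compactness argument in the inverse system. The delicate point is that the kernel $\widehat{G_0}$ is non-abelian, so one cannot settle the lifting by a naive $H^1$ computation; the consistency and existence of the limiting conjugator must instead be forced by the strong separability properties — hereditary conjugacy separability and separability of the finite subgroups — supplied by virtual specialness. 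I would expect the cleanest packaging to be a general lemma stating that, for a hyperbolic virtually special group, the map $G\to\widehat G$ induces a bijection on conjugacy classes of finite subgroups, with Theorem~\ref{relator} then following immediately by feeding in the classical one-relator torsion theorem.
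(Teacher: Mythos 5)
Your reduction is the same as the paper's first move: the classical torsion theorem handles the discrete group $G$, so everything comes down to showing that a torsion element of $\widehat G$ is conjugate into (the image of) $G$. Your structural consequences of Wise's theorem (a torsion-free finite-index normal $G_0$, goodness, hence $\widehat{G_0}$ open, normal and torsion-free, so $\langle y\rangle$ is finite and injects into $Q$) are also sound. But the step you yourself flag as ``the heart of the argument'' is left unproven, and the mechanism you sketch for it does not work. In the finite quotients $G/U$ \emph{every} element is torsion, so the image of $y$ in $G/U$ carries no useful information: there is no reason at all for it to be conjugate in $G/U$ to a power of the image of $\bar r$. The nonemptiness of the sets $\{\,gU : g y g^{-1}\equiv \bar r^{k}\ \mathrm{mod}\ U\,\}$, which your compactness argument needs as input, is not a ``local'' fact one can check level by level --- it is equivalent to the theorem itself. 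Nor can separability rescue this: (hereditary) conjugacy separability and separability of $\langle\bar r\rangle$ are statements about elements and subgroups \emph{of the discrete group} $G$; they give injectivity of the map on conjugacy classes of torsion from $G$ to $\widehat G$, but say nothing about an element of $\widehat G\smallsetminus G$, and what is needed here is precisely the surjectivity direction. Finally, the ``general lemma'' you hope to quote (that $G\to\widehat G$ induces a bijection on conjugacy classes of finite subgroups for hyperbolic virtually special groups) is not an off-the-shelf citation; proving it would require exactly the kind of argument you are trying to avoid.

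The paper closes this gap with a different, and essential, idea: induction along the Magnus--Moldavanskii hierarchy. By Theorem~18.1 of \cite{Wise-qc-h} the hierarchy of a one-relator group with torsion is quasi-convex, and by \cite{HW} quasi-convex subgroups are virtual retracts, hence separable; separability is what allows the hierarchy to survive profinite completion, giving $\widehat G\amalg\widehat\Z=HNN(\widehat H,\widehat M,t)$. Then Theorem~3.10 of \cite{ZM} (profinite Bass--Serre theory) conjugates any torsion element of this profinite HNN-extension into the base group $\widehat H$, and induction on the length of the hierarchy finishes, with the free group at the bottom supplying the torsion-free base case. In other words, the nontrivial input about torsion in $\widehat G$ comes from profinite actions on profinite trees, not from separability of the discrete group alone; your proposal is missing this (or some equivalent) ingredient, so as it stands it is a correct reduction followed by an unproved claim.
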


\begin{proof} The result is well-known for the discrete group $G$ (see for example \cite[Theorem 9.3]{A}. Hence it is enough to show that 
a torsion element $y\in \widehat G$ is conjugate to an element of $G$. 
Every one-relator group $G$ embeds naturally into a free product $G'=G\ast \Z$
which is an HNN extension $HNN(H,M,t)$ of a
one-relator group $H$ with shorter relator (length of the reduced word), where $M$ is a free subgroup generated by
subsets of the generators of the presentation of $G$ (cf.\ the
Magnus-Moldavanskii construction in Section~18.b
\cite{Wise-qc-h}). The hierarchy is finite, i.e.\ continuing further the splitting into such HNN-extensions, we  terminate at a virtually free
group of the form $\Z/n\ast F$, where $F$ is free. Let us use induction
on the length of the relator to prove that $y$ is conjugate to an element of $G$. 
If the length is zero, this means that $r=1$ and $G$ is free and then $\wh{G}$ is torsion free.
In this case, the claim trivially holds. 

According to Theorem~18.1 \cite{Wise-qc-h}, the
Magnus-Moldavanskii hierarchy is quasi-convex for any one-relator
group with torsion, i.e., for one-relator groups with torsion, the
subgroups $H$,$M$,$M^t$ are quasi-convex at each level of the
hierarchy. Wise showed that $G$ has a finite index subgroup $G_0$
that embeds as a quasi-convex subgroup of a right-angled Artin
group. It follows that every quasi-convex subgroup of G is a
virtual retract and is hence separable (cf.\ Theorem 7.3 \cite{HW}).

Thus the hierarchy is separable (including finite index subgroups
of the groups of the hierarchy) and so the profinite completion functor extends the hierarchy on 
$G$ to a hierarchy on $\widehat G$. In particular 
$\widehat G'=\widehat G\amalg \widehat \Z=HNN(\widehat H, \widehat M, t)$.
By Theorem~3.10 \cite{ZM}, any torsion element of a profinite HNN-extension is conjugate to an 
element of the base group. We may then assume that our torsion element $y$ is in $\widehat H$ and 
use the induction hypothesis to conclude that it is conjugated to an element of $H$ and so of $G'$. 
Since $\wh{G}$ is a free factor of $\wh{G}'$ and $y\in\wh{G}$, it follows that $y$ is actually conjugated to an 
element of $G$.
\end{proof}

Let us recall that an abstract group $G$ is \emph{good}, if the natural homomorphism
$G\ra\wh{G}$ of the group to its profinite completion induces an isomorphism on
cohomology with finite coefficients (cf.\ Exercises \S 2.6 \cite{Serre}).
From the proof of Theorem \ref{relator} and Theorem 1.4 \cite{GJZ}, it then follows:

\begin{theorem} One-relator groups with torsion are good.
\end{theorem}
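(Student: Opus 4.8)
The plan is to feed the separable hierarchy constructed in the proof of Theorem~\ref{relator} into the cohomological inheritance machinery of \cite{GJZ}, exploiting the fact that goodness, much like the conjugacy of torsion elements, is transmitted along a sufficiently well-behaved graph-of-groups decomposition. Recall that in that proof we embedded $G$ into $G'=G\ast\Z=HNN(H,M,t)$, where $H$ is a one-relator group of strictly shorter Magnus--Moldavanskii hierarchy, $M$ is a finitely generated free group, and the separability of the quasi-convex subgroups $H$, $M$, $M^t$ guarantees that the decomposition is \emph{efficient}: the profinite topology of $G'$ induces the full profinite topology on each of these subgroups, and the profinite completion functor carries the HNN decomposition of $G'$ to the profinite HNN decomposition $\wh{G'}=HNN(\wh{H},\wh{M},t)$. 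This efficiency is exactly the input that the comparison of discrete and profinite (continuous) cohomology requires.

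First I would argue by induction on the length of the hierarchy. For the base case, the hierarchy terminates at a virtually free group of the form $\Z/n\ast F$ with $F$ free; finite groups and free groups are good, and a free product is the fundamental group of a graph of groups over the trivial edge group, hence automatically efficient, so $\Z/n\ast F$ is good by Theorem~1.4 of \cite{GJZ}. For the inductive step, assuming $H$ good and noting that the edge group $M$ is free and therefore good, the efficiency of the decomposition established above is precisely the hypothesis needed to apply Theorem~1.4 of \cite{GJZ} to $\wh{G'}=HNN(\wh{H},\wh{M},t)$; this yields that $G'$ is good.

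Finally, since $G$ is a free factor of $G'$, it is a retract of $G'$, and the naturality of the comparison map in cohomology with finite coefficients shows that goodness passes to retracts; therefore $G$ itself is good. The main obstacle here is not the inductive bookkeeping but the verification of the efficiency hypotheses of \cite{GJZ} at every level of the hierarchy, namely that the profinite topology of the ambient group restricts to the full profinite topology on the vertex and edge groups and that the decomposition survives profinite completion. This is exactly the separability furnished by Wise's theorem, and it has already been secured while proving Theorem~\ref{relator}; once it is in hand, the goodness statement becomes a formal consequence of the cohomological inheritance results of \cite{GJZ}.
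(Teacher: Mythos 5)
Your proof is correct and follows essentially the same route as the paper, whose one-line argument is precisely to combine the separable (hence, after profinite completion, efficient) Magnus--Moldavanskii hierarchy established in the proof of Theorem~\ref{relator} with the goodness-inheritance result of Theorem~1.4 in \cite{GJZ}. Your write-up simply makes explicit the induction along the hierarchy, the base case $\Z/n\ast F$, and the final retract (free factor) step that the paper leaves implicit.
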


\begin{proof}Let $G$ be a one-relator group with torsion.
As in the proof of Theorem \ref{relator}, we use induction on the length of the relator of $G$. We also use the notations of that proof. 
For $G=\Z/n\ast F$, the result is clear and this provides the base for the induction. The subgroup $H$ of $G'=G\ast \Z=HNN(H,M,t)$ 
satisfies the induction hypothesis and so by Proposition 3.5 \cite{GJZ} combined with the last paragraph of the preceding proof we deduce that 
HNN-extension $G'=HNN(H,M,t)$ is good. But the cohomology of a free product is the sum of cohomologies of the factors in both the abstract and 
the profinite situation. Therefore the isomorphism $H^i(\widehat G',M)\to H^i( G',M)$ restricts to the required isomorphism 
$H^i(\widehat G,M)\to H^i(G,M)$, for $i\geq 0$.\end{proof}

\begin{proof}[Proof of Theorem~\ref{wise}.] By Theorem \ref{relator}, the element  
$x^{k_n}\langle y^n\rangle^{\widehat F}/\langle y^n\rangle^{\widehat F}$ of the quotient group 
$\widehat F/\langle y^n\rangle^{\widehat F}$ is conjugated to a power of the element 
$y\langle y^n\rangle^{\widehat F}/\langle y^n\rangle^{\widehat F}$ for every $n\in\N^+$. 
The result then follows taking the inverse limit of all these quotients for $n\in\N^+$.
\end{proof}

\section{A geometric proof of Theorem~\ref{magnus}.}\label{geometric proof}
{\it An (orientation-preserving) Fuchsian group} $\Pi$ is a group which admits a presentation of the form:
$$\begin{array}{ll}
\Pi=&\langle\alpha_1, \dots \alpha_g,\beta_1,\dots,\beta_g,\,x_1,\ldots,x_d,\,y_1,\ldots,y_s|\\
\\
&x_1\ldots x_d\cdot y_1\ldots y_s\cdot\prod_{i=1}^g[\alpha_i,\beta_i];\,
x_1^{m_1},\ldots,x_d^{m_d},\mbox{ for }m_1,\ldots,m_d\in\N^+ \rangle.
\end{array}$$
The \emph{measure} $\mu(\Pi)$ of such a Fuchsian group is defined by:
$$\mu(\Pi)=2g-2+\sum_{i=1}^d(1-\dfrac{1}{m_i})+s.$$

The Fuchsian group $\Pi$ is \emph{hyperbolic} if $\mu(\Pi)>0$.
Geometrically, this can be reformulated by saying that $\Pi$ is a finitely generated non-elementary discrete group of isometries of
the hyperbolic plane. Hyperbolic Fuchsian groups arise as topological fundamental
groups of complex hyperbolic orbicurves (see, for instance, \cite{Mochi} for a definition).

The integer $g$ is the genus of an orbifold Riemann surface $S$ whose fundamental group
has the standard presentation given to the hyperbolic Fuchsian group $\Pi$.
Let us then observe that the same Fuchsian group as an abstract group can be given distinct
presentations corresponding to orbifold Riemann surfaces of different genuses.

Following \cite{Mochi}, the \emph{order} of the Fuchsian
group $\Pi$ is the least common multiple of the integers
$m_1,\ldots,m_d$, i.e.\ of the orders of the cyclic subgroups
generated by $x_1,\ldots,x_d$ in $\Pi$. A finite subgroup of $\Pi$
in the conjugacy class of the cyclic subgroup $\langle
x_i\rangle$, for $i=1,\ldots,d$, is called \emph{a decomposition group}.

Let $S$ be an orbifold Riemann surface whose fundamental group can be identified with the
Fuchsian group $\Pi$ described above. Then, to a finite index subgroup $K$ of $\Pi$,
is associated an unramified covering $S_K\ra S$. If $K$ is a normal subgroup, the covering is normal
with covering transformation group the quotient $G_K:=\Pi/K$. The orbifold Riemann surface $S_K$
is representable, i.e.\ the decomposition groups of its points are all trivial, if and only if, 
$\langle x_i\rangle^a\cap K=\{1\}$, for all $i=1,\ldots,d$ and all elements $a\in\Pi$. If $K$ is a
normal subgroup, then it is enough to ask that $\langle x_i\rangle\cap K=\{1\}$, for all $i=1,\ldots,d$.

The following is a slight generalization, in a different terminology, of Lemma~2.11 \cite{Mochi}, of which,
however, we prefer to give an independent proof:

\begin{lemma}\label{absolute}Let $\Pi$ be a hyperbolic Fuchsian group with the presentation
given above and let $\cS$ be a class of finite groups. Let
us assume that $\Pi$ contains a torsion free normal subgroup $H$
such that $\Pi/H\in\cS$. Let $\hP^\cS$ be the pro-$\cS$ completion of $\Pi$.
Then, there hold:
\begin{enumerate}
\item $\langle x_i\rangle\cap \langle x_j\rangle^h\neq\{1\}$, if and only if
$i=j$ and $h\in\langle x_j\rangle$. In particular, the subgroup
$\langle x_i\rangle$ is self-normalizing in the profinite group
$\hP^\cS$, for $i=1,\ldots,d$.
\item Every finite non-trivial subgroup $C$ of $\hP^\cS$ is contained in a decomposition group,
i.e.\ in a subgroup $\langle x_i\rangle^h$ for some $i\in\{1,\ldots,d\}$ and $h\in\hP^\cS$.
\end{enumerate}
\end{lemma}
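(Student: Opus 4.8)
The plan is to deduce both statements from the structure of $\hP^\cS$ as an extension of a finite group by a torsion-free pro-$\cS$ surface group, combined with profinite Bass--Serre theory. First I would exploit the hypothesis: since $H$ is torsion free and normal with $\Pi/H\in\cS$, each cyclic group $\langle x_i\rangle$ injects into $\Pi/H$, so its order $m_i$ divides $|\Pi/H|$ and $\langle x_i\rangle\in\cS$. Consequently the composite $\Pi\to\Pi/H$ factors through $\hP^\cS$ and is injective on each $\langle x_i\rangle$, so the closure of $\langle x_i\rangle$ in $\hP^\cS$ is again a finite cyclic group of order $m_i$. Writing $G:=\Pi/H$ and $\overline{H}$ for the closure of $H$ in $\hP^\cS$, the closure of $\cS$ under extensions yields a short exact sequence $1\to\overline{H}\to\hP^\cS\to G\to 1$ in which $\overline{H}$ is the pro-$\cS$ completion of the surface group $H$, hence torsion free. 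In particular every finite subgroup $C\le\hP^\cS$ meets $\overline{H}$ trivially and therefore embeds into $G$.

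Next I would realize $\hP^\cS$ as the pro-$\cS$ fundamental group of a finite graph of groups modelled on the orbifold $S$: its vertex groups are the decomposition groups $\langle x_i\rangle$ and free groups, its edge groups are trivial or procyclic, and $\overline{H}$ acts freely on the associated profinite tree $T$ while $G$ permutes the quotient. The \emph{properness} of this completion --- that the closures of the vertex and edge groups in $\hP^\cS$ are exactly the pro-$\cS$ vertex and edge groups, and that $T$ is a genuine profinite tree --- is where the torsion-freeness of $H$ and the extension-closure of $\cS$ enter. Granting this, a finite subgroup of such a fundamental group fixes a vertex of $T$ (the profinite analogue of the Bass--Serre fixed-point property, cf.\ \cite{ZM} and \cite{R-Z}), so $C$ is conjugate into a vertex group. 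Since $C$ is non-trivial while the free vertex groups and the edge groups are torsion free, $C$ must lie in a conjugate $\langle x_i\rangle^h$ of a decomposition group; this is part (ii).

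For part (i), I would read off the intersection pattern of decomposition groups from the vertex stabilizers in $T$. The stabilizer of the vertex fixed by $\langle x_j\rangle$ is exactly $\langle x_j\rangle$, and because the edge groups contribute no torsion, two conjugates of decomposition groups have non-trivial intersection only if they stabilize the same vertex. Hence $\langle x_i\rangle\cap\langle x_j\rangle^h\neq\{1\}$ forces $i=j$ and $h\in\langle x_j\rangle$; the reverse implication is immediate, since then $\langle x_j\rangle^h=\langle x_j\rangle$. Self-normalization follows at once: if $h$ normalizes $\langle x_i\rangle$ then $\langle x_i\rangle\cap\langle x_i\rangle^h=\langle x_i\rangle\neq\{1\}$, whence $h\in\langle x_i\rangle$.

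The main obstacle is the middle step: establishing the $\cS$-properness of the orbifold graph-of-groups decomposition and the fixed-point property for finite subgroups of the profinite fundamental group, so that the geometric picture --- a finite group acting on the covering surface $S_H$ must fix a ramification point --- is faithfully reproduced in the pro-$\cS$ setting. The rigid cocompact cases, where $\Pi$ admits no splitting over trivial or finite subgroups, require separate handling; there I would instead use the splitting of the torsion-free surface group $H$ over procyclic subgroups, let $G$ act on the resulting profinite tree, and locate the fixed ramification vertex of $C$ by analysing the induced action, which again confines $C$ to a conjugate of a decomposition group.
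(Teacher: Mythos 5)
Your route is genuinely different from the paper's: you argue via profinite Bass--Serre theory, whereas the paper acts with $\hP^\cS$ on the $\cS$--solenoid $\mS^{\cS}$ and invokes Scheiderer's profinite Smith theory (a $p$-acyclic profinite space has $p$-acyclic, hence one-point, fixed-point set for a $C_p$-action). In the punctured case $s>0$ your argument is essentially sound and even somewhat cleaner where it applies: there $\Pi\cong\Z/m_1\ast\cdots\ast\Z/m_d\ast F$ with each $\Z/m_i\in\cS$ (since $\langle x_i\rangle\cap H=\{1\}$ forces $\langle x_i\rangle\hookra\Pi/H$), residual-$\cS$-ness plus closure of $\cS$ under extensions gives properness of the pro-$\cS$ free product decomposition (this is the paper's Lemma~\ref{Bass-Serre}), and the results of \cite{ZM} on finite subgroups of fundamental groups of graphs of profinite groups and on intersections of conjugates of vertex groups yield both (i) and (ii) directly, bypassing the paper's reduction of (ii) to solvable groups with cyclic Sylow subgroups. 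Indeed this is close to how the paper itself treats nodal Fuchsian groups in Lemma~\ref{absolute2}.

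The genuine gap is the cocompact case $s=0$, which you defer to ``separate handling'' but which is the essential case, not a corner case: in the application to Theorem~\ref{magnus} the vertex groups of $\Pi/\sg_n^\Pi$ are obtained by capping boundary curves with discs, so they are cocompact whenever $S$ is closed or a complementary subsurface carries no punctures. There $\Pi$ is one-ended and admits no splitting over finite or trivial subgroups, so your main argument never starts, and your fallback fails concretely: a splitting of the surface group $H$ over $\Z$ is not normalized by $\Pi$, so neither $G=\Pi/H$ nor $\hP^\cS$ has any induced action on the profinite Bass--Serre tree of $\ol{H}$. To get an action of $\hP^\cS$ on a profinite tree you would need a splitting of $\Pi$ itself, necessarily over infinite cyclic subgroups; but then the properness of the pro-$\cS$ completion of a graph of groups with infinite procyclic edge groups is a separability statement (the pro-$\cS$ topology of $\Pi$ must induce the full pro-$\cS$ topology on those cyclic subgroups and on the virtually free vertex groups, and their completions must inject) which you do not establish, which is delicate for a general class $\cS$, and which the paper only obtains later (Theorem~\ref{subsurface}) as a consequence of the machinery built on this very lemma --- so assuming it here risks circularity. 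The paper's solenoid argument exists precisely to handle the cocompact case uniformly: the missing tree is replaced by the $p$-acyclic profinite space $\mS^{\cS}$, and the fixed-vertex property by the fact that $(\mS^{\cS})^{C_p}$ is $p$-acyclic and therefore a single point, which is what gives both the intersection statement in (i) and the existence of fixed points in (ii).
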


\begin{proof}Let $S$ be an orbifold Riemann surface such that $\Pi=\pi_1(S)$ and let $S_H\ra S$ be the 
covering associated to the subgroup $H$ of $\Pi$. By hypothesis, $S_H$ is a Riemann surface.
\emph{The $\cS$--solenoid} $\mS^{\cS}$ (cf.\ \cite{scc}, for more details on this construction) is defined 
to be the inverse limit space
$\mS^{\cS}:=\varprojlim_{\sst \Pi/K\in\cS}S_K$ of the coverings $S_K\ra S$
associated to normal subgroups $K$ of $\Pi$ such that $\Pi/K\in\cS$. Let us observe that 
$\mS^{\cS}\cong\mS_H^{\cS}$, where the latter space is the inverse limit of the coverings $S_K\ra S_H$
associated to normal subgroups $K$ of $H$ such that $H/K\in\cS$.
There is then a series of natural isomorphisms:
$$H^k(\mS^{\cS},\Z/p)\cong H^k(\mS_H^{\cS},\Z/p):=\varinjlim_{\sst H/K\in\cS}H^k(S_K,\Z/p)\cong
\varinjlim_{\sst H/K\in\cS}H^k(K,\Z/p).$$

It is well known that a surface group is $p$-good for all primes $p$ (cf.\ (iii) of Lemma 5.12 in \cite{EHKZ}, for instance). 
This implies that $\varinjlim_{\sst H/K\in\cS}H^k(K,\Z/p)=0$, for $k>0$ and all primes $p\in\Ld_\cS$. It follows that, 
for all $p\in\Ld_\cS$, there hold $H^k(\mS^{\cS},\Z/p)=\{0\}$, for $k> 0$, and $H^0(\mS^{\cS},\Z/p)=\Z/p$.

There is a natural continuous action of $\hP^\cS$ on the $\cS$--solenoid $\mS^{\cS}$ and
the decompositions groups of $\hP^\cS$ naturally identify with the stabilizers of points of
$\mS^{\cS}$ in the inverse image of points of the orbifold Riemann surface
$S$ which have non-trivial isotropy groups.

In order to prove $(i)$, it is enough to show that the intersection of the stabilizers of two points
$P_1$ and $P_2$ contains a cyclic subgroup $C_p$ of prime order $p\in\Ld_\cS$ if and only if 
there holds $P_1=P_2$.

Let us observe that the solenoid $\mS^{\cS}$ can be triangulated by a simplicial profinite set in such a
way that the inverse images of the orbifold points of $S$ with non-trivial isotropy group are realized
inside the set of $0$-simplices. Therefore, it is possible to apply to the action of $C_p$ on the solenoid
$\mS^{\cS}$ the results of \cite{Scheiderer}.

By the results of \S 5 in \cite{Scheiderer}, item (b) of Theorem~10.5, Chap. VII \cite{Brown} generalizes
to profinite spaces. Therefore, since the profinite space $\mS^{\cS}$ is $p$-acyclic, for all $p\in\Ld_\cS$,
it follows that $(\mS^{\cS})^{C_p}$ is also $p$-acyclic,
where $(\mS^{\cS})^{C_p}$ is the fixed point set of the action of the $p$-group $C_p$ on the
$\cS$--solenoid $\mS^{\cS}$. In particular, $(\mS^{\cS})^{C_p}$ is connected and thus consists
of only one point. In particular, there holds $P_1=P_2$.

Let us now prove $(ii)$. Here, we basically proceed like in the proof of Lemma~2.11 \cite{Mochi}.
So, let $C$ be a finite subgroup of $\hP^\cS$. It then holds $C\in\cS$. 
Let us assume moreover that $C$ is solvable. Then, by induction on the order of $C$, we can further
assume that either:

\begin{itemize}
\item[a)] $C$ is of prime order $p\in\Ld_\cS$;
\item[b)] $C$ is an extension of a group of prime order $p\in\Ld_\cS$ by a non-trivial subgroup
$C_1\subseteq C$ which is contained in the decomposition group $A$.
\end{itemize}

If a) is satisfied, then, since the space $\mS^{\cS}$ is $p$-acyclic, there holds $(\mS^{\cS})^C\neq\emptyset$,
i.e.\ the subgroup $C$ is contained in a decomposition group of $\hP^\cS$.

If b) holds, by replacing the profinite group $\hP^\cS$ with its open subgroup $C_1\cdot \wh{H}^\cS$,
we can actually assume that $C_1$ is a decomposition group. But then, by $(i)$, it is also self-normalizing
and there holds $C_1=C$.

For $C$ any finite subgroup of $\hP^\cS$, the above arguments show that the Sylow subgroups of $C$
are cyclic. By a classical result of group theory, the group $C$ is then solvable and we are reduced to
the case already treated.
\end{proof}

\begin{remark}\label{determine}A consequence of Lemma~\ref{absolute} is that two (orientation-preserving) hyperbolic 
Fuchsian groups are isomorphic if and only if their pro-$\cS$ completions are isomorphic, where $\cS$ is a class of groups
containing their torsion subgroups (e.g.\ we can take for $\cS$ the class of solvable groups). In fact, it is not difficult to see that 
in order to reconstruct a Fuchsian group $\Pi$ of the above type, we need the following information:
\begin{enumerate}
\item the rank of the abelianization of $\Pi$;
\item whether $\Pi$ is virtually free or not;
\item the conjugacy classes of torsion subgroups of $\Pi$.
\end{enumerate}
The first two informations can be recovered from the homology of $\hP^\cS$ because the group $\Pi$ is $\cS$-good, 
since it contains a surface group $N$ as a normal finite index subgroup such that $\Pi/N\in\cS$ and surface groups are $\cS$-good. 
As for the third information, this can be recovered from $\hP^\cS$ by Lemma~\ref{absolute}. 
In particular, this provides a generalization of Theorem~1.4 of \cite{BCR} in the hyperbolic case.

\end{remark}

The next step is to generalize Lemma~\ref{absolute} to the fundamental group of a graph
of hyperbolic Fuchsian groups. More precisely, let $(\cG,Y)$ be a graph of groups such that
the vertex groups $G_v$, for every vertex $v\in v(Y)$, are hyperbolic Fuchsian groups and
the edge groups $G_e$ identify with maximal finite subgroups of the vertex
groups, for every edge $e\in e(Y)$. Then, we say that $\pi_1(\cG,Y)$ is \emph{a nodal Fuchsian group}.

Nodal Fuchsian groups can be characterized as topological fundamental groups of \emph{orbifold nodal 
Riemann surface}, i.e.\ nodally degenerate orbifold Riemann surfaces.

As above, if $X$ is an orbifold nodal Riemann surface, then its fundamental group is virtually torsion free,
if and only if $X$ admits a finite \'etale (eq.\ finite \'etale Galois) covering $Y\ra X$, where $Y$ is a 
connected nodal Riemann surface. 

\begin{definition}\label{decomposition}
\begin{enumerate}
\item Let $\cS$ be a class of finite groups, i.e.\ closed by taking subgroups, 
homomorphic images and extensions. \emph{A pro-$\cS$ nodal Fuchsian group} $\hP^\cS$ is the pro-$\cS$
completion of a nodal Fuchsian group $\Pi$.
\item We say that a finite subgroup $D$ of a nodal Fuchsian group $\pi_1(\cG,Y)$ or of its
pro-$\cS$ completion $\hp_1^\cS(\cG,Y)$ is \emph{a decomposition group of type $I$} if it is in the
conjugacy class of a decomposition subgroup $I$ of a vertex group of $(\cG,Y)$.
\end{enumerate}
\end{definition}

The following result is a result of independent interest we need in order to generalize
Lemma~\ref{absolute} to pro-$\cS$ nodal Fuchsian groups:

\begin{lemma} \label{Bass-Serre}Let $\cS$ be a class of finite groups
and let $(\cG,Y)$ be a finite graph of (discrete) groups, with  finite edge
groups, such that $G=\pi_1(\cG,Y)$ is residually-$\cS$. Then, the
pro-$\cS$ completion $\widehat{G}^\cS$ of $G$ is isomorphic to the
pro-$\cS$ fundamental group $\hp^\cS_1(\widehat{\cG}^\cS,Y)$ of the finite
graph of pro-$\cS$ groups $(\widehat{\cG}^\cS,Y)$ obtained from
$(\cG,Y)$ by taking the pro-$\cS$ completion of each  vertex group
and the vertex groups of $(\widehat{\cG}^\cS,Y)$  embed in $\widehat{G}^\cS$.
\end{lemma}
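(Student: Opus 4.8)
The plan is to prove Lemma~\ref{Bass-Serre} by first checking that the pro-$\cS$ completion interacts well with the Bass--Serre tree of the graph of groups $(\cG,Y)$, and then verifying the two assertions separately: that $\wh{G}^\cS \cong \hp_1^\cS(\wh{\cG}^\cS,Y)$, and that each vertex group embeds into $\wh{G}^\cS$. The essential input is that the edge groups are \emph{finite}, which is what makes the situation tractable in the pro-$\cS$ setting, since finite groups are their own pro-$\cS$ completions and cause no completion-theoretic pathology.

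First I would recall the standard fact (see \cite{R-Z}, Chapter~9, on graphs of pro-$\cS$ groups, or the foundational work of Zalesskii--Mel'nikov) that to a finite graph of pro-$\cS$ groups $(\wh{\cG}^\cS,Y)$ with finite edge groups one can canonically associate its pro-$\cS$ fundamental group $\hp_1^\cS(\wh{\cG}^\cS,Y)$, and that this construction is compatible with completion. Concretely, there is always a natural continuous epimorphism from $\wh{G}^\cS$ onto $\hp_1^\cS(\wh{\cG}^\cS,Y)$, coming from the universal property of pro-$\cS$ completion applied to the defining homomorphisms $G_v \to G \to \wh{G}^\cS$, which factor through the finite edge relations. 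The content of the lemma is that this map is an isomorphism and that the composites $\wh{G_v}^\cS \to \hp_1^\cS(\wh{\cG}^\cS,Y) \cong \wh{G}^\cS$ are injective. The residual-$\cS$ hypothesis on $G$ guarantees $G \hookra \wh{G}^\cS$, so the discrete vertex groups $G_v$ embed; one must upgrade this to their completions.

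The key step is to establish \emph{efficiency} of the decomposition, i.e.\ that the edge and vertex groups of $(\cG,Y)$ are closed in $\wh{G}^\cS$ and induce their full pro-$\cS$ topology, equivalently that $G$ is \emph{$\cS$-efficient} with respect to this splitting. Because the edge groups are finite (hence closed and $\cS$-separable automatically) and because $G$ is residually-$\cS$, I expect efficiency to follow from separability of the vertex groups together with the finiteness of the edges: one shows that every finite-index $\cS$-quotient of $G_v$ extends to a finite-index $\cS$-quotient of $G$ compatible with the graph structure, using that the finitely many finite edge groups impose only finitely many constraints. Granting efficiency, the vertex groups $G_v$ are closed in $\wh{G}^\cS$ with induced topology equal to their pro-$\cS$ topology, so their closures are exactly $\wh{G_v}^\cS$, which gives both the embedding of vertex groups and, via the standard correspondence between the profinite action of $\wh{G}^\cS$ on the pro-$\cS$ tree and the graph of pro-$\cS$ groups, the isomorphism $\wh{G}^\cS \cong \hp_1^\cS(\wh{\cG}^\cS,Y)$.

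The hard part will be verifying this $\cS$-efficiency: one needs to know that the profinite topology on $G$ restricts correctly to the vertex groups and is "compatible across edges", which in general can fail for infinite edge groups but is much more benign here because the edge groups are finite. I would handle it by a direct argument constructing, for each open normal subgroup of a vertex completion, a coherent system of finite $\cS$-quotients of $G$ -- using that a finite graph has finitely many edges and each edge group is finite, so the gluing conditions are finitary -- and then passing to the inverse limit. Once separability and efficiency are in place, the identification of the two pro-$\cS$ groups and the injectivity of the vertex-group maps are formal consequences of the pro-$\cS$ Bass--Serre theory for finite graphs of profinite groups with finite edge groups.
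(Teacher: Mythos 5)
Your overall reduction is the same as the paper's: everything comes down to showing that the pro-$\cS$ topology of $G$ induces the \emph{full} pro-$\cS$ topology on each vertex group $G_v$ (once that is known, the closure of $G_v$ in $\wh{G}^\cS$ is $\wh{G_v}^\cS$ and the identification $\wh{G}^\cS\cong\hp^\cS_1(\wh{\cG}^\cS,Y)$ is formal). The gap is in your proof of this induced-topology ("efficiency") statement. You assert that every finite-index $\cS$-quotient of $G_v$ extends to a finite-index $\cS$-quotient of $G$ "because the finitely many finite edge groups impose only finitely many constraints". That assertion \emph{is} the content of the lemma, and finitariness of the gluing data does not prove it. Concretely, the natural way to carry out your gluing is: given $N_v\trianglelefteq G_v$ with $G_v/N_v\in\cS$ (refined so that $N_v$ misses the finite edge groups), form the induced finite graph of finite $\cS$-groups $(\cQ,Y)$ and the quotient $G\twoheadrightarrow\pi_1(\cQ,Y)$, and then look for a finite $\cS$-quotient of $\pi_1(\cQ,Y)$ in which the finite vertex groups $Q_v$ inject. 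Such a quotient need not exist: the fundamental group of a finite graph of finite $\cS$-groups need not be residually-$\cS$ --- already an amalgam of two finite $p$-groups over a common subgroup can fail to be residually-$p$ (Higman) --- and the hypothesis that $G$ is residually-$\cS$ does not help here, because residual-$\cS$-ness is not inherited by the quotient $\pi_1(\cQ,Y)$. So your "direct argument" stalls exactly where the work is.

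The paper's proof supplies the missing idea, and it is short. Since $G$ is residually-$\cS$ and there are only finitely many edge groups, all finite, one can find a normal subgroup $H$ of $G$, open in the pro-$\cS$ topology, which meets every edge group trivially. By Bass--Serre theory (the Kurosh-type subgroup theorem for groups acting on trees, applied to the action of $H$ with trivial edge stabilizers), $H$ decomposes as a free product in which $H\cap G_v$ is a free factor. Free factors are retracts, so the pro-$\cS$ topology of $H$ induces on $H\cap G_v$ its full pro-$\cS$ topology --- this is exactly Corollary~3.1.6 of \cite{R-Z} --- and since $H$ is $\cS$-open in $G$, $H\cap G_v$ is $\cS$-open in $G_v$, and $\cS$ is extension-closed, the statement for $(H, H\cap G_v)$ is equivalent to the one for $(G,G_v)$. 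In other words, the finiteness of the edge groups is exploited not to make the gluing "finitary", but to kill the edge groups inside a finite-index subgroup, where the graph-of-groups decomposition degenerates into a free product and the retraction argument applies. Without this reduction (or some equivalent tree/normal-form argument) your efficiency claim remains unproved, so the proposal as written has a genuine gap.
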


\begin{proof} Since $G$ is residually $\cS$ one can  find an open (in the pro-$\cS$ 
topology) subgroup $H$ of $G$ that intersects trivially all the edge groups.  Then it suffices
to show that the pro-$\cS$ topology of $G$  or equivalently of $H$ induces the full
pro-$\cS$ topology on $H\cap \cG(v)$. But $H\cap \cG(v)$ is a
free factor of $H$ so this statement is just Corollary 3.1.6 in \cite{R-Z}.
\end{proof}

Let us then extend Lemma~\ref{absolute} to pro-$\cS$ nodal Fuchsian groups:

\begin{lemma}\label{absolute2}Let $\Pi:=\pi_1(\cG,Y)$ be a nodal Fuchsian group and let $\cS$ be 
a class of finite groups. Let us assume that $\Pi$ contains a torsion free
normal subgroup $H$ such that $\Pi/H\in\cS$. Let then $\hP^\cS$ be
the pro-$\cS$ completion of $\Pi$. Let $D_1$ and $D_2$ be decomposition groups of $\hP^\cS$ of
type $I_1$ and $I_2$, respectively. Then, there hold:

\begin{enumerate}
\item The profinite group $\hP^\cS$ is virtually torsion free.
\item $\hP^\cS=\hp^\cS_1(\widehat{\cG}^\cS,Y)$ and the vertex groups of 
$(\widehat{\cG}^\cS,Y)$ embed in $\hP^\cS$.
\item $D_1\cap D_2\neq\{1\}$, if and only if $D_1=D_2$ and $I_1$, $I_2$ are contained and conjugated 
in a vertex group $G_v$ for some vertex $v\in v(Y)$.
\item The decomposition groups of $\hP^\cS$ are self-normalizing.
\item Every finite non-trivial subgroup $C$ of $\hP^\cS$ is contained in a decomposition group.
\end{enumerate}
\end{lemma}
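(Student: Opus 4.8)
The plan is to prove Lemma~\ref{absolute2} by combining the structural results on graphs of groups (Lemma~\ref{Bass-Serre}) with the local analysis already carried out for a single Fuchsian vertex group in Lemma~\ref{absolute}, using Bass--Serre theory for profinite groups acting on profinite trees. First I would establish $(i)$ and $(ii)$ together. Since $\Pi=\pi_1(\cG,Y)$ contains a torsion-free normal subgroup $H$ of finite index with $\Pi/H\in\cS$, the quotient map shows $\Pi$ is residually-$\cS$, so Lemma~\ref{Bass-Serre} applies directly and gives $\hP^\cS=\hp^\cS_1(\wh{\cG}^\cS,Y)$ with the vertex groups embedding in $\hP^\cS$. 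The subgroup $H$ itself is the fundamental group of a graph of torsion-free groups, hence torsion-free; its closure $\wh{H}^\cS$ is open in $\hP^\cS$ and, being the pro-$\cS$ fundamental group of a graph of torsion-free (surface) groups with trivial edge groups, is torsion-free as well. This gives $(i)$, and $(ii)$ is just the statement of Lemma~\ref{Bass-Serre} recorded for later use.

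The heart of the argument is the standard profinite tree associated to the graph-of-groups decomposition in $(ii)$. By $(ii)$ the group $\hP^\cS$ acts on the profinite Bass--Serre tree $T$ with vertex stabilizers the conjugates of the vertex groups $\wh{G}_v^\cS$ and edge stabilizers the (finite) conjugates of the edge groups. I would then prove $(v)$ first, as it is the engine for the rest. Given a finite nontrivial subgroup $C\subseteq\hP^\cS$, I would invoke the profinite analogue of the fact that a finite group acting on a (profinite) tree fixes a vertex: by Theorem-style results on finite group actions on profinite trees (as in the references to profinite Bass--Serre theory and \cite{ZM}), $C$ fixes a vertex, hence lies in a conjugate of some $\wh{G}_v^\cS$. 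Applying Lemma~\ref{absolute}$(ii)$ inside that vertex group, $C$ is contained in a decomposition group of $\wh{G}_v^\cS$, which by definition is a decomposition group of type $I$ in $\hP^\cS$. This yields $(v)$.

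For $(iii)$ and $(iv)$ I would exploit the same tree action combined with the local statement Lemma~\ref{absolute}$(i)$. Suppose $D_1\cap D_2\neq\{1\}$ and pick a nontrivial $c$ in the intersection. Both $D_1$ and $D_2$ are finite, hence each fixes a unique vertex of $T$ (uniqueness comes from acyclicity of $T$ together with the fact that a nontrivial finite group cannot stabilize an edge and both adjacent vertices distinctly, i.e. the fixed-point set of a finite group in a profinite tree is itself a subtree and, being finite-group-fixed, reduces to a single vertex — precisely the mechanism used in Lemma~\ref{absolute}$(i)$ via $p$-acyclicity of the solenoid). Since $c$ fixes the vertex of $D_1$ and the vertex of $D_2$, and these fixed vertices are unique, the two vertices coincide; call it $v$. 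Then $I_1,I_2$ and $c$ all lie in the single vertex group $\wh{G}_v^\cS$, and Lemma~\ref{absolute}$(i)$ applied there forces $D_1=D_2$ with $I_1,I_2$ conjugate in $G_v$, giving $(iii)$. For $(iv)$, let $g$ normalize a decomposition group $D$; then $g$ fixes (setwise, hence, by uniqueness, pointwise) the vertex fixed by $D$, so $g$ lies in the corresponding vertex group, where self-normalization of decomposition groups is exactly Lemma~\ref{absolute}$(i)$.

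The main obstacle I anticipate is making the ``finite group acting on a profinite tree fixes a unique vertex'' step fully rigorous in the pro-$\cS$ setting: one must ensure that the profinite tree $T$ is genuinely $p$-acyclic for $p\in\Ld_\cS$ and that the fixed-point theorems of Smith-type (used for the solenoid in Lemma~\ref{absolute}) transfer to this tree, rather than merely citing the discrete Bass--Serre theorem. I would handle this by reducing, as in Lemma~\ref{absolute}$(ii)$, to the case where $C$ has prime order $p$ (Sylow subgroups are cyclic by the same classical group-theory argument, so $C$ is solvable and an induction on $|C|$ applies), and then use the profinite fixed-point results to locate $C$ in a vertex stabilizer; the uniqueness of the fixed vertex then follows from the tree being a profinite tree with the relevant acyclicity. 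All the genuinely geometric content has already been isolated in Lemma~\ref{absolute}, so the remaining work is organizing the Bass--Serre reduction correctly.
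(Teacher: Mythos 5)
Your overall strategy coincides with the paper's: prove (i) via torsion-freeness of $\wh{H}^\cS$, get (ii) from Lemma~\ref{Bass-Serre}, and reduce (iii)--(v) to Lemma~\ref{absolute} by conjugating finite subgroups into vertex groups using profinite Bass--Serre theory (the paper cites Theorem~3.10 and Remark~3.18 of \cite{ZM} for exactly the step you invoke in (v)). Two smaller points first: a single quotient $\Pi/H\in\cS$ does not show $\Pi$ is residually-$\cS$ (it only separates elements outside $H$); you need that $H$, being a free product of surface groups, is itself residually-$\cS$, and then closure of $\cS$ under extensions --- this matters because residual-$\cS$-ness is the hypothesis of Lemma~\ref{Bass-Serre}. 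Likewise, torsion-freeness of a free pro-$\cS$ product of torsion-free pro-$\cS$ groups is not automatic; it is again the conjugacy theorem of \cite{ZM}, which the paper cites explicitly. Both are fixable in your framework.

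The genuine gap is in your argument for (iii) and (iv). You assert that a nontrivial finite subgroup of $\hP^\cS$ fixes a \emph{unique} vertex of the profinite Bass--Serre tree, justified by the claim that ``a nontrivial finite group cannot stabilize an edge.'' That claim is false precisely in the situation of this lemma: by definition of a nodal Fuchsian group, the edge groups of $(\wh{\cG}^\cS,Y)$ are maximal finite subgroups (i.e.\ decomposition groups) of the vertex groups, hence are nontrivial finite groups, and any subgroup of an edge group stabilizes that edge and therefore fixes both of its endpoints. Since the decomposition groups attached to the nodes \emph{are} edge groups, your uniqueness mechanism breaks down exactly for the subgroups the lemma is about: the step ``$c$ fixes the vertex of $D_1$ and the vertex of $D_2$, and these fixed vertices are unique, so the two vertices coincide'' does not follow, and in (iv) an element normalizing $D$ need only stabilize the fixed subtree of $D$, not a single vertex. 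The paper closes this hole with Theorem~3.12 of \cite{ZM}: the intersection of conjugates of two distinct vertex groups is conjugate to a subgroup of an edge group; since edge groups identify with decomposition groups of the vertex groups, a nontrivial intersection $D_1\cap D_2$ forces both $D_1$ and $D_2$ to be conjugate to the same decomposition group inside a single vertex group, where Lemma~\ref{absolute}(i) applies. Your proof needs this ingredient (or an equivalent analysis of fixed subtrees and of the nontrivial edge stabilizers) to go through.
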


\begin{proof} Since the subgroup $H$ is torsion free, it is a free product of surface groups. Moreover, 
since $\Pi/H\in\cS$, the closure of the subgroup $H$ in $\hP^\cS$ coincides
with its pro-$\cS$ completion $\wh{H}^\cS$ and so is a free pro-$\cS$ product of pro-$\cS$ surface
groups. By Theorem~3.10 and Remark~3.18 \cite{ZM}, any finite subgroup of the profinite group $\wh{H}^\cS$ 
is contained in a free factor of $\wh{H}^\cS$. As observed in the proof of Lemma~\ref{Bass-Serre}, such a 
free factor is the pro-$\cS$ completion of the corresponding free factor of the group $H$, hence it is a 
pro-$\cS$ surface group, which is torsion free. It follows that $\widehat{H}^\cS$ is torsion free. 
This proves (i).

In particular, by the above proof, the group $\Pi=\pi_1(\cG,Y)$ is residually-$\cS$. Item $(ii)$ then 
follows from Lemma~\ref{Bass-Serre}.

By Theorem~3.10 and Remark~3.18 \cite{ZM}, any finite subgroup of the profinite group $\hP^\cS$ is 
contained in a vertex group of $(\widehat{\cG}^\cS,Y)$. Moreover, by Theorem~3.12 \cite{ZM}, the intersection 
of the conjugates of two distinct vertex groups is conjugate to a subgroup of an edge group.

Since the edge groups of $(\widehat{\cG}^\cS,Y)$ identify with
decompositions groups of the vertex groups, if $D_1\cap D_2\neq\{1\}$, then they are both conjugated
to the same decomposition group of some vertex group of $(\widehat{\cG}^\cS,Y)$.
Thus, items $(iii)$, $(iv)$, $(v)$ follows from Lemma~\ref{absolute}.
\end{proof}

\begin{proof}[Proof of Theorem~\ref{magnus}.]It is not restrictive to assume that
$\sg=\{\gm_1,\ldots,\gm_h\}$ is a simple subset of $\Pi$. Let $\sg_n^\Pi$, for $n\in\N^+$, the closed
normal subgroup of $\Pi$ generated by $n$-th powers of elements of $\sg$.

Let us denote by $S_n$ the orbifold nodal Riemann surface obtained topologically from $S$ glueing a
disc to each s.c.c.\ in $\sg$ with an attaching map of degree $n$. Then, there is a natural isomorphism
$\pi_1(S_n)\cong\Pi/\sg_n^{\Pi}$.

Therefore, the quotient group $\Pi/\sg_n^{\Pi}$ is a nodal Fuchsian group
whose decomposition groups are the conjugacy classes of the subgroups generated by the
images of the elements in $\sg$ and the quotient group $\hP^\cS/\sg_n^{\hP^\cS}$ is the
pro-$\cS$ completion of $\Pi/\sg_n^{\Pi}$.

Let us prove that, if $n$ is a product of primes in $\Ld_\cS$, then the nodal Fuchsian group 
$\Pi/\sg_n^{\Pi}$ satisfies the hypotheses of Lemma~\ref{absolute2}. It is enough to show that there 
is a torsion free, normal subgroup $H$ of the quotient group $\Pi/\sg_n^{\Pi}$ of index a product 
of powers of primes in $\Ld_\cS$ and such that the quotient of $\Pi/\sg_n^{\Pi}$ by $H$ is a 
metabelian group, or, equivalently, that there exists a normal, metabelian $\Ld_\cS$-covering 
$S_n''\ra S_n$ such that $S_n''$ is representable.

Let $S'\ra S$ be the abelian $\Ld_\cS$-covering associated to the characteristic subgroup $[\Pi,\Pi]\Pi^n$ 
of $\Pi$. This covering has the property that its restriction to every s.c.c.\ of $S'$, which covers either a
non-separating s.c.c.\ or, in case there is more than one puncture on $S$, a peripheral s.c.c.\ on $S$, 
has degree $n$.

Since $\sg_n^{\Pi}<[\Pi,\Pi]\Pi^n$, there is an induced $\Ld_\cS$-covering $S_n'\ra S_n$ which ramifies
with order $n$ over the orbifold points of $S_n$ corresponding to the non-separating and, in case there
is more than one puncture on $S$, the peripheral s.c.c.'s in $\sg$. Therefore the orbifold Riemann 
surface $S_n'$ is representable over those points.

From the same argument used in the proof of Lemma~3.10 \cite{sym}, it follows that a s.c.c.\ contained
in the inverse image in $S'$ of a non-peripheral separating s.c.c.\ $\gm$ in the set $\sg$ is non-separating
and its image in $S_n'$ is homologically non-trivial. In case there is only one peripheral s.c.c.\ in $\sg$, a
s.c.c.\ contained in its inverse image in $S'$ has also homologically non-trivial image in $S_n'$.

Let $\Pi':=[\Pi,\Pi]\Pi^n$. The metabelian $\Ld_\cS$-covering $S''\ra S$ associated to the normal subgroup
$\sg_n^{\Pi}[\Pi',\Pi'](\Pi')^n$ of $\Pi$ has then the property that its restriction to every s.c.c.\ of $S''$,
lying above a s.c.c.\ of $\sg$, has degree $n$. Therefore, the induced metabelian $\Ld_\cS$-covering 
$S_n''\ra S_n$ is such that $S_n''$ is representable.

For an element $a\in \hP^\cS$, let us denote by $\bar{a}$ its image in the quotient group
$\hP^\cS/\sg_n^{\hP^\cS}$. The image $\bar{y}$ of the given $y$ then has finite order.

From Lemma~\ref{absolute2}, it follows that $\bar{y}\in\langle\bar{\gm}_i\rangle^{\bar{x}}$, for some
$i\in\{1,\ldots,h\}$ and $\bar{x}\in\hP^\cS/\sg_n^{\hP^\cS}$. Since this holds for all $n$ which are a product
of primes in $\Ld_\cS$, by an inverse limit argument, it actually holds $y\in\langle\gm_i\rangle^x$,
for some $i\in\{1,\ldots,h\}$ and $x\in\hP^\cS$.
\end{proof}

We say that an element $x$ of a profinite group $G$ is \emph{full} if the $p$-component
$\langle x\rangle^{(p)}$ of the pro-cyclic group generated by $x$ is non-trivial for every prime
$p$ dividing the order of $G$.

By cohomological methods, it is possible to show that normalizers of full elements in a 
non-abelian pro-$\cS$ surface groups are pro-cyclic. For algebraically simple elements in the 
pro-$\cS$ completion of a surface group, this also follows from an argument similar to the one given in the 
proof of Theorem~\ref{magnus}:

\begin{proposition}\label{normalizers}Let $\cS$ be a class of finite groups and let
$\hP^\cS$ be the pro-$\cS$ completion of a non-abelian surface group $\Pi$.
Let $x$ be an algebraically simple element of $\hP^\cS$. Then, for all $n\in\ZZ^\cS\ssm\{0\}$, there holds:
$$N_{\hP^\cS}(\langle x^n\rangle)=N_{\hP^\cS}(\langle x\rangle)=\langle x\rangle.$$
\end{proposition}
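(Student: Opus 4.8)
The plan is to prove the chain of equalities
$$N_{\hP^\cS}(\langle x^n\rangle)=N_{\hP^\cS}(\langle x\rangle)=\langle x\rangle$$
by exploiting the same passage to the finite nodal-Fuchsian quotients $\hP^\cS/\sg_n^{\hP^\cS}$ used in the proof of Theorem~\ref{magnus}, applied now to the single absolutely simple element $x$. Without loss of generality I would assume $x\in\Pi$ is simple, represented by a single s.c.c.\ $\td{\gm}$ on $S$, so that $\sg=\{x\}$ and $\sg_n^{\Pi}=\langle x^n\rangle^\Pi$. The right-hand inclusion $\langle x\rangle\subseteq N_{\hP^\cS}(\langle x\rangle)$ is trivial, and since $\langle x^n\rangle$ is characteristic in $\langle x\rangle$ we also get $N_{\hP^\cS}(\langle x\rangle)\subseteq N_{\hP^\cS}(\langle x^n\rangle)$ for free. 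So the real content is the reverse inclusion $N_{\hP^\cS}(\langle x^n\rangle)\subseteq\langle x\rangle$, and it suffices to prove this for $n\in\N^+$ a product of primes in $\Ld_\cS$ and then take an inverse limit over such $n$ to recover the case of a general $n\in\ZZ^\cS\ssm\{0\}$.

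For a fixed such $n$, I would work in the quotient $G_n:=\hP^\cS/\langle x^n\rangle^{\hP^\cS}$, which, exactly as in the proof of Theorem~\ref{magnus}, is the pro-$\cS$ completion of the nodal Fuchsian group $\Pi/\langle x^n\rangle^\Pi=\pi_1(S_n)$ and satisfies the hypotheses of Lemma~\ref{absolute2}. The image $\bar x$ of $x$ generates a decomposition group $\langle\bar x\rangle$ of order $n$ in $G_n$. Suppose $g\in\hP^\cS$ normalizes $\langle x^n\rangle$; I need to show $\bar g$ normalizes $\langle\bar x\rangle$ in $G_n$, and then invoke part (iv) of Lemma~\ref{absolute2}, which says decomposition groups are self-normalizing, to conclude $\bar g\in\langle\bar x\rangle$. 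The point is that conjugation by $g$ sends $\langle\bar x\rangle$ to another finite cyclic subgroup of the same order; by Lemma~\ref{absolute2}(v) this conjugate lies in some decomposition group, and since $\langle\bar x\rangle^{\bar g}$ and $\langle\bar x\rangle$ are finite subgroups with nontrivial intersection (they are conjugate, hence equal, images of the normalized $\langle x^n\rangle$), part (iii) of the same lemma forces them to coincide. Hence $\bar g\in N_{G_n}(\langle\bar x\rangle)=\langle\bar x\rangle$, i.e.\ $g\in\langle x\rangle\cdot\langle x^n\rangle^{\hP^\cS}$.

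Assembling these congruences as $n$ ranges over products of primes in $\Ld_\cS$, I would take the inverse limit $\ilim_n G_n\cong\hP^\cS$ (the normal subgroups $\langle x^n\rangle^{\hP^\cS}$ form a cofinal system with trivial intersection, by the same solenoid/acyclicity input that makes the Magnus argument work) to deduce that any $g$ normalizing some $\langle x^n\rangle$ actually lies in the closed subgroup $\langle x\rangle$. Since $\langle x\rangle$ is itself abelian and therefore contained in its own normalizer, all three groups in the display coincide, and the identical argument run with $x^n$ in place of $x$ (using that $x^n$ is again full of the same type, so its decomposition data agree) gives the first equality $N_{\hP^\cS}(\langle x^n\rangle)=N_{\hP^\cS}(\langle x\rangle)$.

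\emph{The hardest step} I expect to be the inverse-limit bookkeeping: one must check that the finite-level statements $\bar g\in\langle\bar x\rangle$ are genuinely compatible across the tower $\{G_n\}$ and that the intersection $\bigcap_n\langle x^n\rangle^{\hP^\cS}$ is trivial, so that the limit of the cyclic subgroups $\langle\bar x\rangle\subset G_n$ is exactly the pro-cyclic $\langle x\rangle\subset\hP^\cS$ and no extra normalizing elements survive in the limit. The self-normalizing input from Lemma~\ref{absolute2}(iii)--(iv) does all the geometric work at each finite level; the subtlety is purely in organizing the limit correctly, and in handling a general exponent $s\in\ZZ^\cS\ssm\{0\}$, where one argues component-by-component over the primes $p\in\Ld_\cS$ using that the $p$-component of $s$ is nonzero for at least one $p$, so that $\langle x^s\rangle=\langle x\rangle$ already forces the normalizers to agree.
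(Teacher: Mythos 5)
Your overall strategy---pass to the quotients by normal closures of powers of $x$, which are pro-$\cS$ completions of (nodal) Fuchsian groups, apply Lemma~\ref{absolute2}, and take an inverse limit---is exactly the paper's, and it does prove the middle equality: if $g$ normalizes $\langle x\rangle$, then its image in every quotient $G_m:=\hP^\cS/\langle x^m\rangle^{\hP^\cS}$ normalizes the decomposition group $\langle\bar x\rangle$, hence lies in it by Lemma~\ref{absolute2}(iv), and intersecting the closed sets $\langle x\rangle\cdot\langle x^m\rangle^{\hP^\cS}$ over all $m$ (compactness, together with $\bigcap_m\langle x^m\rangle^{\hP^\cS}=\{1\}$, which holds because any open normal subgroup $U$ contains $\langle x^m\rangle^{\hP^\cS}$ for $m$ the exponent of $\hP^\cS/U$) gives $N_{\hP^\cS}(\langle x\rangle)=\langle x\rangle$.

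However, your treatment of $N_{\hP^\cS}(\langle x^n\rangle)$ has a genuine gap: you work at the \emph{same} level $n$ as the subgroup being normalized. In $G_n=\hP^\cS/\langle x^n\rangle^{\hP^\cS}$ one has $\bar x^{\,n}=1$ by construction, so the image of $\langle x^n\rangle$ is \emph{trivial}; the hypothesis that $g$ normalizes $\langle x^n\rangle$ therefore imposes no condition whatsoever on $\bar g$ in $G_n$. In particular your claim that $\langle\bar x\rangle$ and $\langle\bar x\rangle^{\bar g}$ have nontrivial intersection ``because they are conjugate, hence equal, images of the normalized $\langle x^n\rangle$'' is false---both of those images are $\{1\}$, and Lemma~\ref{absolute2}(iii) needs a \emph{nontrivial} intersection to conclude anything. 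The repair is the device used in the paper: for the fixed $n$, pass instead to the quotients $\wh{\Xi}_h^\cS:=\hP^\cS/\langle x^h\rangle^{\hP^\cS}$ with $h>n$ (more precisely with $h\nmid n$; such levels are cofinal, and this formulation is also what handles a general exponent $n\in\ZZ^\cS\ssm\{0\}$). At such a level the image of $\langle x^n\rangle$ is the nontrivial subgroup $C_h^n$ of the decomposition group $C_h=\langle\bar x\rangle$, and since $\bar g$ normalizes it, $C_h^n\subseteq C_h\cap C_h^{\bar g}$; now (iii) forces $C_h=C_h^{\bar g}$ and (iv) gives $\bar g\in C_h$, i.e.\ $N_{\wh{\Xi}_h^\cS}(C_h^n)=C_h$, and the inverse limit over $h$ concludes as before. (Relatedly, your closing assertion that $\langle x^s\rangle=\langle x\rangle$ for $s\in\ZZ^\cS\ssm\{0\}$ is wrong unless $s$ is a unit; it is the cofinality of the levels $h\nmid s$, not that identity, which rescues the general exponent.)
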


\begin{proof}We can assume that $x$ is a simple element of $\Pi$. The quotient group 
$\Xi_h:=\Pi/\langle x^h\rangle^\Pi$ is a nodal Fuchsian group which satisfies the hypotheses of 
Lemma~\ref{absolute2}. Let $x_h$ be the image of $x$ in $\Xi_h$. Then,
the cyclic group $C_h$ generated by $x_h$ is a decomposition group of $\Xi_h$.
From Lemma~\ref{absolute2}, it follows that, in the pro-$\cS$ completion $\wh{\Xi}_h^\cS$ of
$\Xi_h$, for $n\in\N^+$ and $h>n$, there holds:
$N_{\wh{\Xi}_h^\cS}(C_h)=N_{\wh{\Xi}_h^\cS}(C_h^n)=C_h$.
The conclusion of the proposition then follows taking the inverse limit for $h\ra\infty$.
\end{proof}

\emph{A multi-curve} (cf.\ Definition~\ref{curve-complex}) $\sg$ on a Riemann surface $S$ is a set 
$\{\gm_0,\ldots,\gm_k\}$ of disjoint, non-trivial, non-peripheral s.c.c.'s on $S$, such that they are 
two by two non-isotopic. The complement $S\ssm\sg$ is then a disjoint union of hyperbolic Riemann 
surfaces $\coprod_{i=0}^h S_i$ and, for some choices of base points and a path between them, 
the fundamental group $\Pi_i:=\pi_1(S_i)$, for $i=0,\ldots,h$, identifies with a subgroup of $\Pi:=\pi_1(S)$. 

Let $Y_\sg$ be the graph which has for vertices the connected components of $S\ssm\sg$ and for edges
the elements of $\sg$, where two vertices $S_i$ and $S_j$ are joined by the edge $\gm_l$ if $\gm_l$ is
a boundary component of both $S_i$ and $S_j$. Then, the group $\Pi$ is naturally isomorphic to the
fundamental group of the graph of groups $(\cG,Y_\sg)$ with vertex groups $\cG_i=\Pi_i$, for 
$i=0,\ldots,h$, and with edge groups the cyclic groups $C_j$, for $j=0,\ldots k$, generated by 
representatives in $\Pi$ of the s.c.c.'s in $\sg$.

From $(ii)$ Lemma~\ref{absolute2} and an argument similar to that of the proof of Theorem~\ref{magnus},
it follows a description of the pro-$\cS$ completion of $\Pi$ in terms of the above graph of groups:

\begin{theorem}\label{subsurface}Let $\sg$ be a multi-curve on a Riemann surface $S$, let 
$S\ssm\sg=\coprod_{i=0}^h S_i$ be the decomposition in connected components and let $\Pi_i$ be the 
fundamental group of $S_i$, for $i=0,\ldots,h$. The fundamental group $\Pi$ of $S$ is naturally
isomorphic to the fundamental group of the graph of groups $(\cG,Y_\sg)$, described above, with vertex 
groups the groups $\Pi_i$, for $i=0,\ldots,h$. Let $\cS$ be a class of finite groups.
Then, the pro-$\cS$ completion $\hP^\cS$ of $\Pi$ is the pro-$\cS$ fundamental group of the graph of 
profinite groups $(\widehat{\cG}^\cS,Y_\sg)$ whose vertex and edge groups are the pro-$\cS$ completions 
of vertex and edge groups of the graph of groups $(\cG,Y_\sg)$. Moreover, the vertex groups $\hP^\cS_i$,
for $i=0,\ldots,h$, embed in $\hP^\cS$ and are their own normalizers in this group.
\end{theorem}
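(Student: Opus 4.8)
Let me understand what needs to be proven. We have a multi-curve $\sigma$ on a Riemann surface $S$, decomposing $S \setminus \sigma = \coprod S_i$. The discrete group $\Pi$ is the fundamental group of the graph of groups $(\mathcal{G}, Y_\sigma)$. The claim is:
1. $\hat\Pi^\mathcal{S}$ is the pro-$\mathcal{S}$ fundamental group of $(\widehat{\mathcal{G}}^\mathcal{S}, Y_\sigma)$
2. The vertex groups $\hat\Pi_i^\mathcal{S}$ embed in $\hat\Pi^\mathcal{S}$
3. The vertex groups are their own normalizers.

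Let me plan my proof.

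The plan is to run the same Dehn-filling-and-inverse-limit scheme used to prove Theorem~\ref{magnus}, using Lemma~\ref{absolute2} at each finite level and pro-$\cS$ Bass-Serre theory only at the very end. For $n$ a product of primes in $\Ld_\cS$, let $\sg_n^\Pi$ be the normal closure in $\Pi$ of the $n$-th powers of the chosen generators $\gm_0,\ldots,\gm_k$ of the edge groups, and set $\Pi_{(n)}:=\Pi/\sg_n^\Pi$. Exactly as in the proof of Theorem~\ref{magnus}, $\Pi_{(n)}$ is the fundamental group of the orbifold nodal Riemann surface $S_n$ obtained by capping each s.c.c.\ in $\sg$ with a disc along a degree $n$ attaching map; hence $\Pi_{(n)}$ is a nodal Fuchsian group whose graph of groups has underlying graph $Y_\sg$, vertex groups the orbifold quotients of the $\Pi_i$ and finite cyclic edge (decomposition) groups $C_j/C_j^n\cong\Z/n$. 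The construction in the proof of Theorem~\ref{magnus} moreover provides, for such $n$, a torsion free normal subgroup of $\Pi_{(n)}$ with quotient in $\cS$, so that $\Pi_{(n)}$ falls under the hypotheses of Lemma~\ref{absolute2}.

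Applying Lemma~\ref{absolute2}$(ii)$ to each $\Pi_{(n)}$ yields a natural isomorphism $\hP^\cS/\sg_n^{\hP^\cS}=\widehat{(\Pi_{(n)})}^\cS\cong\hp_1^\cS(\widehat{\cG_{(n)}}^\cS,Y_\sg)$, a finite graph of pro-$\cS$ groups with edge groups $\Z/n$ in which the (orbifold) vertex groups embed. On the other hand, the inclusions $\Pi_i\hookra\Pi$ and the identification $\Pi\cong\pi_1(\cG,Y_\sg)$ induce a continuous homomorphism $\phi\co\hP^\cS\ra\hp_1^\cS(\widehat{\cG}^\cS,Y_\sg)=:G$ into the pro-$\cS$ fundamental group of the graph of pro-$\cS$ groups with pro-cyclic edge groups $\widehat{C_j}^\cS\cong\ZZ^\cS$. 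Killing $n$-th powers of the edge groups identifies $G/K_n$, where $K_n$ is the closed normal subgroup generated by $n$-th powers of the edge groups, with $\hp_1^\cS(\widehat{\cG_{(n)}}^\cS,Y_\sg)$, and $\phi$ intertwines these quotients with the isomorphisms above. Now for any open normal subgroup $U$ of $\hP^\cS$ the image of each $\gm_j$ in the finite group $\hP^\cS/U\in\cS$ has order dividing $N:=|\hP^\cS/U|$, whence $\sg_N^{\hP^\cS}\subseteq U$; thus $\bigcap_n\sg_n^{\hP^\cS}=\{1\}$ and $\hP^\cS=\varprojlim_n\hP^\cS/\sg_n^{\hP^\cS}$, and the same argument gives $\bigcap_n K_n=\{1\}$ and $G=\varprojlim_n G/K_n$. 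Passing to the inverse limit turns $\phi$ into $\varprojlim_n$ of the level-$n$ isomorphisms, so $\phi$ is an isomorphism $\hP^\cS\cong\hp_1^\cS(\widehat{\cG}^\cS,Y_\sg)$, which is the first assertion.

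The embedding of the vertex groups follows by the same limiting argument: the map $\hP_i^\cS\ra\hP^\cS$ induced by $\Pi_i\hookra\Pi$ is injective because it is the inverse limit, over $n$, of the injections $\hP_i^\cS/\langle\text{boundary}^n\rangle^{\hP_i^\cS}\hookra\hP^\cS/\sg_n^{\hP^\cS}$ supplied by Lemma~\ref{absolute2}$(ii)$, the relevant intersection of normal closures being trivial by the order argument above applied inside $\hP_i^\cS$. It remains to prove that the vertex groups are self-normalizing, and this is the step I expect to be the main obstacle, since Lemma~\ref{absolute2}$(iv)$ self-normalizes only the \emph{finite} decomposition (edge) groups and cannot be transported to the infinite vertex groups level by level. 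Here I would instead use the action of $\hP^\cS$ on the standard pro-$\cS$ tree $T$ of $(\widehat{\cG}^\cS,Y_\sg)$, in which $\hP_i^\cS$ is the stabilizer of a vertex $v_i$ and the incident edge stabilizers are the pro-cyclic groups $\ZZ^\cS$. Since $S_i$ is hyperbolic, $\hP_i^\cS$ is a non-abelian pro-$\cS$ surface group and therefore properly contains each incident edge group; hence it can stabilize no edge of $T$ and, by connectivity, fixes $v_i$ as its unique fixed vertex. Consequently, if $g\in\hP^\cS$ normalizes $\hP_i^\cS$ then $\hP_i^\cS=g\hP_i^\cS g^{-1}$ stabilizes both $v_i$ and $gv_i$, forcing $gv_i=v_i$ and $g\in\hP_i^\cS$, so that $N_{\hP^\cS}(\hP_i^\cS)=\hP_i^\cS$. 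The delicate point to verify is precisely that a vertex stabilizer has a unique fixed vertex in the pro-$\cS$ tree, for which one invokes the theory of pro-$\cS$ groups acting on pro-$\cS$ trees together with the intersection property for conjugates of vertex groups (the analogue of Theorem~3.12 \cite{ZM} used in Lemma~\ref{absolute2}).
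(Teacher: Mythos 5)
Your proposal is correct and follows essentially the same route as the paper's own proof: Dehn-filling quotients $\Pi/\sg_n^\Pi$, application of Lemma~\ref{absolute2}~(ii) at each finite level, and an inverse-limit identification of $\hP^\cS$ with the pro-$\cS$ fundamental group of $(\widehat{\cG}^\cS,Y_\sg)$, with the vertex groups embedding. The only divergence is the self-normalizing statement, where the paper simply cites Corollary~3.13 and Remark~3.18 of \cite{ZM} while you unpack the same pro-$\cS$ Bass-Serre input into an explicit unique-fixed-vertex argument; this is a matter of presentation, and your justification that $\bigcap_n\sg_n^{\hP^\cS}=\{1\}$ is, if anything, more careful than the paper's.
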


\begin{proof}Let us also denote by $\sg$ a set of representatives in $\Pi$ of the s.c.c.'s in $\sg$.
As in the proof of Theorem~\ref{magnus}, let us consider the quotient group $\Pi/\sg_n^{\Pi}$,
which is a nodal Fuchsian group satisfying the hypotheses of Lemma~\ref{absolute2}.
This group is the fundamental group of the graph of groups
$(\cG_n,Y_\sg)$ whose vertex groups are the fundamental groups of the orbifolds obtained, from
the connected components of the manifold with boundary obtained cutting $S$ in $\sg$,
attaching discs to their boundary components with an attaching map of degree $n$. 
 
By $(ii)$ Lemma~\ref{absolute2}, its pro-$\cS$ completion $\hP^\cS/\sg_n^{\hP^\cS}$ is the pro-$\cS$ 
fundamental group $\hP^\cS_1(\widehat{\cG},Y_\sg)$ of the finite graph of pro-$\cS$ groups 
$(\widehat{\cG}^\cS_n,Y_\sg)$ obtained from $(\cG_n,Y)$ by taking the pro-$\cS$ completion of each  
vertex group of $(\cG_n,Y_\sg)$ and the vertex groups of $(\widehat{\cG}^\cS_n,Y_\sg)$  embed in 
$\hP^\cS/\sg_n^{\hP^\cS}$. 

The inverse limit, for $n\ra\infty$, of the graphs of pro-$\cS$ groups $(\widehat{\cG}^\cS_n,Y_\sg)$
is the graph of groups $(\widehat{\cG}^\cS,Y_\sg)$, whose vertex groups are the pro-$\cS$ completions
of the fundamental groups of the connected components of $S\ssm\sg$. The pro-$\cS$ fundamental
group of $(\widehat{\cG}^\cS,Y_\sg)$ is the inverse limit, for $n\ra\infty$, of the groups 
$\hP^\cS/\sg_n^{\hP^\cS}$, i.e.\ the pro-$\cS$ completion $\hP^\cS$ of $\Pi$. Therefore,
the vertex groups of $(\widehat{\cG}^\cS,Y_\sg)$ embed in $\hP^\cS$. The last statement in the theorem
then follows from Corollary~3.13 and Remark~3.18 \cite{ZM}.
\end{proof}

\section[Relative versions of the restricted Magnus property.]
{Relative versions of the restricted Magnus\\ property.}\label{relative version}

For the applications given in Section~\ref{linearization}, we need a finer result than
Theorem~\ref{magnus}. Let us give the following definition:

\begin{definition}\label{relative compl}
\begin{enumerate}
\item For a given profinite group $G$ (possibly finite) and a given prime $p\geq 2$, let us denote
respectively by $G^{(p)}$ and $G^{nil}$ its maximal pro-$p$ and pro-nilpotent quotients. It is clear
that, if the group $G$ is pro-nilpotent, there holds $G=\prod G^{(p)}$ and the pro-$p$ group $G^{(p)}$
is naturally identified with the $p$-Sylow subgroup of $G$.

\item For $K$ a normal open subgroup of a profinite group $G$ and a given prime $p\geq 2$, let
$G_K^{(p)}$ and $G_K^{nil}$ be, respectively, the quotients of $G$ by the kernels of
the natural epimorphisms $K\ra K^{(p)}$ and $K\ra K^{nil}$. Let us call them, respectively, 
the relative maximal pro-$p$ and pro-nilpotent quotients of $G$ with respect to the subgroup $K$.

\item For $K$ a normal finite index subgroup of a discrete group $G$ and a given prime $p\geq 2$,
let us denote by $\wh{G}_K^{(p)}$ and $\wh{G}_K^{nil}$, respectively, the quotients of the profinite
completion $\wh{G}$ by the kernels of the natural epimorphisms $\wh{K}\ra \wh{K}^{(p)}$ and
$\wh{K}\ra\wh{K}^{nil}$. Let us call $\wh{G}_K^{(p)}$ and $\wh{G}_K^{nil}$, respectively, the relative pro-$p$
and pro-nilpotent completion of $G$ with respect to the subgroup $K$.
\end{enumerate}
\end{definition}

Mochizuki's Lemma~\ref{absolute} admits the following generalization to relative
pro-$p$ completions of hyperbolic Fuchsian groups:

\begin{lemma}\label{relative}Let $\Pi$ be a hyperbolic Fuchsian group with the presentation given in
\S \ref{geometric proof}. Let $K$ be a finite index normal subgroup of $\Pi$ which contains a torsion
free normal subgroup $H$ of index a power of $p$, for a prime $p\geq 2$. Let then $\hP_K^{(p)}$ be
the relative pro-$p$ completion  of $\Pi$ with respect to the subgroup $K$. Let $D_i$ be a decomposition 
group of $\hP_K^{(p)}$ in the conjugacy class of the cyclic subgroup $\langle x_i\rangle$, for $i=1,\ldots,n$.
Then, there hold:

\begin{enumerate}
\item For all $x\in\hP_K^{(p)}$, there holds $D_i^{(p)}\cap (D_j^{(p)})^x\neq\{1\}$, if and only if, $i=j$ and
$D_i=(D_i)^x$. In particular, for all $i=1,\ldots,n$ such that $D_i^{(p)}\neq\{1\}$, there is a series of
identities:
$$N_{\hP_K^{(p)}}(D_i)=N_{\hP_K^{(p)}}(D_i^{(p)})=D_i.$$
\item Every finite nilpotent subgroup $C$ of $\hP_K^{(p)}$, such that $C^{(p)}\neq\{1\}$,  is contained
in a decomposition group of $\hP_K^{(p)}$.
\item Therefore, the decomposition groups $D$ of $\Pi_K^{(p)}$, such that $D^{(p)}\neq\{1\}$, may
be characterized as the maximal finite nilpotent subgroups $M$ of the profinite group $\Pi_K^{(p)}$
such that $M^{(p)}\neq\{1\}$.
\end{enumerate}
\end{lemma}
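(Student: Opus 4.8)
The plan is to mimic the solenoid argument of Lemma~\ref{absolute}, but replacing the full $\cS$--solenoid by a solenoid adapted to the \emph{relative} pro-$p$ completion $\hP_K^{(p)}$. First I would set up the geometric object: let $S$ be the orbifold Riemann surface with $\Pi=\pi_1(S)$ and $S_H\ra S$ the finite covering associated to the torsion free subgroup $H$. Since $[\Pi:H]$ is a power of $p$ and $H$ is torsion free, $S_H$ is a genuine Riemann surface, hence $p$-good. I would then define the relevant solenoid $\mS_K^{(p)}$ as the inverse limit of the coverings $S_N\ra S$ ranging over those normal subgroups $N\trianglelefteq\Pi$ that define the relative completion, i.e.\ $N\supseteq\ker(\wh K\ra \wh K^{(p)})$ in the appropriate sense, so that $\ilim \Pi/N\cong\hP_K^{(p)}$. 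The decomposition groups $D_i$ with $D_i^{(p)}\neq\{1\}$ correspond exactly to the point-stabilizers with nontrivial $p$-part.

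Next I would establish the cohomological input that drives the fixed-point argument. The key computation is that $\mS_K^{(p)}$ is $p$-acyclic: exactly as in Lemma~\ref{absolute}, one passes through $\mS_H^{(p)}$, uses $\mS_K^{(p)}\cong\mS_H^{(p)}$ (because $H$ already trivializes the isotropy), and invokes $p$-goodness of the surface group $H$ together with Theorem~8.3 of \cite{coco} to get $\varinjlim_{\sst} H^k(N,\Z/p)=0$ for $k>0$. The point of restricting to a power-of-$p$ index and to the pro-$p$ direction is precisely that only the prime $p$ now enters the cohomology, so the vanishing statements that before held for all $p\in\Ld_\cS$ now need only hold at the single prime $p$. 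With $\mS_K^{(p)}$ being $\Z/p$-acyclic, I would triangulate it by a simplicial profinite set with the isotropy points among the $0$-simplices and apply the Scheiderer-enhanced version of Theorem~10.5(b), Chap.~VII of \cite{Brown}: for any cyclic $C_p$ of order $p$, the fixed-point set $(\mS_K^{(p)})^{C_p}$ is again $p$-acyclic, hence connected, hence a single point. This yields (i): two decomposition groups whose $p$-parts meet nontrivially share a fixed point, forcing equality of the points and thus of the groups, and self-normalization of $D_i$ follows as in Lemma~\ref{absolute}.

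For (ii) I would run the same induction on $|C|$ as in Lemma~\ref{absolute}(ii), but only along the $p$-part. Given a finite nilpotent $C$ with $C^{(p)}\neq\{1\}$, write $C=\prod_q C_q$ as a product of its Sylow subgroups; the hypothesis singles out the factor $C_p\neq\{1\}$. The prime-order base case is handled by the $p$-acyclicity fixed-point statement just proved, giving $(\mS_K^{(p)})^{C_p}\neq\emptyset$, so $C_p$ lies in a decomposition group. The extension step uses (i): once a subgroup sits inside a decomposition group, that group is self-normalizing, which pins down the larger nilpotent group. The genuinely new wrinkle here, and the step I expect to be the main obstacle, is controlling the \emph{non-$p$} Sylow factors of $C$: in the absolute case one proved all Sylows were cyclic and invoked solvability, whereas here the relative completion need not kill the prime-to-$p$ torsion in a way that makes those factors land automatically in the same decomposition group. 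I would handle this by showing that a finite nilpotent subgroup with nontrivial $p$-part has its whole centralizer structure governed by the $p$-part: since $C$ is nilpotent, $C_p$ is central-commuting with the other factors, so the other factors normalize $\langle C_p\rangle$ and hence, by the self-normalization in (i), must themselves lie in the same decomposition group $D_i$; as $D_i$ is procyclic this forces $C\subseteq D_i$. Finally, (iii) is a formal consequence: by (ii) every finite nilpotent subgroup with nontrivial $p$-part embeds in a decomposition group, and decomposition groups are themselves finite nilpotent (indeed procyclic) with nontrivial $p$-part, so maximality among such subgroups characterizes them exactly.
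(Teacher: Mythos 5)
Your proposal is correct and follows essentially the same route as the paper's proof: the $p$-adic solenoid $\mS_K^{(p)}\cong\mS_H^{(p)}$, its $p$-acyclicity via $p$-goodness of the surface group $H$, the Scheiderer--Brown fixed-point argument giving (i), and an induction with self-normalization giving (ii) and hence (iii). The only organizational difference is in the extension step of (ii): you apply the intersection criterion of (i) directly to the normal (resp.\ commuting) factors of the nilpotent group $C$, whereas the paper first replaces $\hP_K^{(p)}$ by the open subgroup $\wh{H}^{(p)}\cdot C$ so that $C_1$ becomes a full decomposition group --- both versions of the step are valid.
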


\begin{proof}Let $S$ be a hyperbolic orbifold Riemann surface such that $\Pi\cong\pi_1(S)$ and let
$S_K\ra S$ be the Galois unramified covering associated to the normal finite index subgroup $K$
of $\Pi$. Let us also denote by $S_H\ra S_K$ the normal unramified $p$-covering associated to the subgroup 
$H$ of $K$. By hypothesis, $S_H$ is a hyperbolic Riemann surface and the $p$-adic solenoid $\mS_K^{(p)}$
of $S_K$, i.e.\ the inverse limit of all unramified $p$-coverings of $S_K$, identifies with the $p$-adic
solenoid $\mS_H^{(p)}$ of $S_H$. Let $\{L\lhd_o H\}$ be the set of subgroups of $H$ open for
the pro-$p$ topology. There is then a series of natural isomorphisms:
$$H^k(\mS_K^{(p)},\Z/p)\cong H^k(\mS_H^{(p)},\Z/p):=\varinjlim_{L\lhd_o H}H^k(S_L,\Z/p)\cong
\varinjlim_{L\lhd_o  H}H^k(L,\Z/p).$$
Since $H$ is $p$-good, there hold $H^k(\mS_K^{(p)},\Z/p)=\{0\}$, 
for $k> 0$, and $H^0(\mS_K^{(p)},\Z/p)=\Z/p$.

There is a natural continuous action of $\Pi_K^{(p)}$ on the $p$-adic solenoid $S_K^{(p)}$ and
the decompositions groups of $\Pi_K^{(p)}$ identify with the stabilizers of points of
$S_K^{(p)}$ in the inverse image of points of the orbifold Riemann surface
$S$ which have non-trivial isotropy groups. 

The proof then proceeds exactly like the proof of Lemma~\ref{absolute}.
In order to prove $(i)$, it is enough to show that the intersection of the stabilizers of two such points
$P_1$ and $P_2$ contains a cyclic subgroup $C_p$ of order $p$ if and only if $P_1=P_2$.

Since the profinite space $\mS_K^{(p)}$ is $p$-acyclic, it follows that $(\mS_K^{(p)})^{C_p}$ is also 
$p$-acyclic, where $(\mS_K^{(p)})^{C_p}$ is the fixed point set of the action of the $p$-group $C_p$ on 
the $p$-adic solenoid $\mS_K^{(p)}$. In particular, $(\mS_K^{(p)})^{C_p}$ is connected and thus consists
of only one point. In particular, there holds $P_1=P_2$.

The proof of item $(ii)$ essentially follows from the same arguments of the proof of
item $(ii)$ of Lemma~\ref{absolute}.
Since $C$ is nilpotent, it has a unique normal and, by hypothesis, non-trivial
$p$-Sylow subgroup $C^{(p)}$. Proceeding by induction on the order of $C$, we can assume either
that $C$ is of order $p$ or that $C$ is an extension of a group of prime order by a non-trivial subgroup
$C_1\leq C$, which is contained in a decomposition group $D$ and such that $C^{(p)}\leq C_1$,.

In the first case, since the profinite space $\mS_K^{(p)}$ is $p$-acyclic and of finite dimension,
it follows that $(\mS_K^{(p)})^{C}$ is non-empty, i.e.\ $C$ is contained in a decomposition group.

In the second case, replacing $\hP_K^{(p)}$ by its open subgroup $\wh{H}^{(p)}\cdot C$, which is also
the relative pro-$p$ completion of a hyperbolic Fuchsian group with respect to some normal subgroup, we may assume 
that actually $C_1=D\leq C$. Since $C_1$ is normal in $C$ and $D$ is self-normalizing, it follows $C=D$.

Item $(iii)$ is just a reformulation of $(ii)$.
\end{proof}

The next step is to generalize Lemma~\ref{relative} to nodal Fuchsian groups.

\begin{definition}\label{decomposition2}
\begin{enumerate}
\item \emph{A virtual pro-$p$ nodal Fuchsian group} $\hP_K^{(p)}$ is the relative pro-$p$ completion
of a nodal Fuchsian group $\Pi$ with respect to a given finite index normal subgroup $K$.
\item We say that a finite subgroup $D$ of a virtual pro-$p$ nodal Fuchsian group
$\hp_1(\cG,Y)^{(p)}_K$ is \emph{a decomposition group of type $I$} if it is in the
conjugacy class of a decomposition subgroup $I$ of a vertex group of $(\cG,Y)$.
\item Sets of simple and algebraically simple elements of $\hP_K^{(p)}$ are defined as in
Definition~\ref{simple}. Thus, a subset of elements $\sg=\{\gm_1,\ldots,\gm_h\}\subset\hP_K^{(p)}$ is
\emph{algebraically simple} if it is in the $\aut(\hP_K^{(p)})$--orbit of the image of a set $\sg'\subset\Pi$ which
is simple for some presentation of $\Pi$ as the fundamental group of a Riemann surface.
\end{enumerate}
\end{definition}

\begin{lemma}\label{relative2}Let $\Pi:=\pi_1(G,Y)$ be a nodal Fuchsian group. Let $K$ be a finite
index normal subgroup of $\Pi$ and $p\geq 2$ a prime such that, for some normal subgroup $H$ of
$K$ of index a power of $p$ and for all vertices $v$ of $Y$, the subgroups
$H\cap G_v$ are torsion free. Let then $\hP_K^{(p)}$ be the relative pro-$p$ completion of $\Pi$
with respect to the subgroup $K$. Let $D_1$ and $D_2$ be decomposition groups of $\hP_K^{(p)}$ of
type $I_1$ and $I_2$, respectively, such that their $p$-Sylow subgroups are non-trivial. Then, there hold:

\begin{enumerate}
\item The profinite group $\hP_K^{(p)}$ is virtually torsion free.
\item It holds $D_1^{(p)}\cap D_2^{(p)}\neq\{1\}$, if and only if $D_1=D_2$ and
$I_1$, $I_2$ are contained and conjugated in a vertex group $G_v$ for some vertex $v\in v(Y)$.
In particular, if $D$ is a decomposition group  of $\hP_K^{(p)}$ such that $D^{(p)}\neq\{1\}$,
there is a series of identities:
$$N_{\hP_K^{(p)}}(D)=N_{\hP_K^{(p)}}(D^{(p)})=D.$$
\item Every finite nilpotent subgroup $C$ of $\hP_K^{(p)}$, such that $C^{(p)}\neq\{1\}$, is contained in
a decomposition group.
\item Therefore, the decomposition groups of $\hP_K^{(p)}$ with a nontrivial $p$-component may be
characterized as the maximal finite nilpotent subgroups of $\hP_K^{(p)}$ with a nontrivial $p$-component.
\end{enumerate}
\end{lemma}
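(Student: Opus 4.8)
The plan is to mirror the structure of the proof of Lemma~\ref{absolute2}, reducing each assertion about the relative pro-$p$ nodal Fuchsian group $\hP_K^{(p)}$ to the corresponding statement of Lemma~\ref{relative} for vertex groups, via a profinite Bass--Serre decomposition. The first step is to establish the analogue of Lemma~\ref{Bass-Serre} in the relative pro-$p$ setting: since $H$ is torsion free and of $p$-power index in $K$, the closure of $H$ in $\hP_K^{(p)}$ coincides with its maximal pro-$p$ completion $\wh{H}^{(p)}$, which is a free pro-$p$ product of pro-$p$ surface groups. Because $H\cap G_v$ is torsion free for every vertex $v$, these free factors are torsion free, and by Theorem~3.10 and Remark~3.18 \cite{ZM} every finite subgroup of $\wh{H}^{(p)}$ lies in a free factor; hence $\wh{H}^{(p)}$ is torsion free. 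This gives item $(i)$, that $\hP_K^{(p)}$ is virtually torsion free, since $\wh{H}^{(p)}$ is an open normal subgroup.

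Next I would show that $\hP_K^{(p)}$ decomposes as the pro-$p$ fundamental group of the graph of pro-$p$ groups obtained from $(\cG,Y)$ by relative pro-$p$ completion of the vertex groups, with the vertex groups embedding. The mechanism is that $H$ (equivalently $K$) induces the full relative pro-$p$ topology on each intersection $H\cap G_v$, which is a free factor of $H$; this is the content of Corollary~3.1.6 in \cite{R-Z} transported to the pro-$p$ situation exactly as in the proof of Lemma~\ref{Bass-Serre}. With the decomposition in hand, the structural results of \cite{ZM} apply: by Theorem~3.10 and Remark~3.18 \cite{ZM} every finite subgroup of $\hP_K^{(p)}$ is conjugate into a vertex group, and by Theorem~3.12 \cite{ZM} the intersection of two distinct conjugate vertex groups is conjugate into an edge group.

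With these tools, items $(ii)$, $(iii)$ and $(iv)$ follow by combining the global decomposition with the local Lemma~\ref{relative} applied inside a single vertex group. For $(iii)$, given a finite nilpotent $C$ with $C^{(p)}\neq\{1\}$, I conjugate $C$ into a vertex group $G_v$ and apply item $(ii)$ of Lemma~\ref{relative} there to place $C$ inside a decomposition group of type $I$. For $(ii)$, if $D_1^{(p)}\cap D_2^{(p)}\neq\{1\}$ then this common nontrivial $p$-subgroup forces both decomposition groups into a common vertex group (using Theorem~3.12 \cite{ZM} to rule out the case where they came from distinct vertices, since edge groups carry the conjugacy into a single vertex), whereupon item $(i)$ of Lemma~\ref{relative} inside that vertex group yields $D_1=D_2$ together with the conjugacy of $I_1,I_2$; the self-normalizing identities $N_{\hP_K^{(p)}}(D)=N_{\hP_K^{(p)}}(D^{(p)})=D$ then come from the local self-normalizing statement of Lemma~\ref{relative}$(i)$ combined with the fact that vertex groups are their own normalizers. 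Item $(iv)$ is a reformulation of $(ii)$ and $(iii)$ together, exactly as item $(iii)$ of Lemma~\ref{relative} reformulates its item $(ii)$.

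The main obstacle I anticipate is the relative pro-$p$ refinement of the topology argument: one must verify carefully that the relative pro-$p$ completion with respect to $K$ induces on each vertex subgroup precisely its own relative pro-$p$ completion with respect to $K\cap G_v$, so that the hypotheses of Lemma~\ref{relative} are genuinely satisfied vertex by vertex. Unlike the full pro-$\cS$ case of Lemma~\ref{absolute2}, here the completion is dictated by the single subgroup $H$ of $p$-power index, and the compatibility of this relative topology with the edge groups --- whose finite images must survive in the right Sylow structure --- is where the delicate bookkeeping lies; everything else is a transcription of the arguments already carried out for Lemma~\ref{absolute2}.
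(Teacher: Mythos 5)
Your treatment of item (i) coincides with the paper's (free pro-$p$ product decomposition of $\wh{H}^{(p)}$ plus the results of \cite{ZM} inside the pro-$p$ category, where they do apply). But for items (ii)--(iv) the paper does \emph{not} transcribe the Bass--Serre proof of Lemma~\ref{absolute2}; it transcribes the solenoid proof of Lemma~\ref{relative}: it identifies the $p$-adic solenoid $\mS_K^{(p)}$ of $S_K$ with $\mS_H^{(p)}$, uses that $H$, being a free product of surface groups, is $p$-good to conclude that this solenoid is $p$-acyclic, and then repeats verbatim the Smith-theoretic (Scheiderer) fixed-point argument and the self-normalization induction of Lemma~\ref{relative}. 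This choice of route is not cosmetic; it is forced by exactly the point where your argument has a genuine gap.

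The gap is in the step ``with the decomposition in hand, the structural results of \cite{ZM} apply.'' Lemma~\ref{Bass-Serre} and Theorems~3.10, 3.12 and Remark~3.18 of \cite{ZM} belong to pro-$\cS$ Bass--Serre theory for a class $\cS$ closed under extensions: only under that hypothesis is the standard graph of the splitting an $\cS$-tree, which is what those theorems require. The relative pro-$p$ completion is not a pro-$\cS$ completion for any such class: its system of finite quotients consists of extensions of the fixed finite group $\Pi/K$ by $p$-groups, which is not extension-closed, and the natural candidate for the tree, namely $\varprojlim_N T/N$ with $T$ the Bass--Serre tree of $(\cG,Y)$ and $N$ running over the defining subgroups, is only a pro-$p$ tree, not a profinite tree --- a loop of the finite graph $T/H$ (which is present whenever the nodal surface has a non-separating node) unwinds only through covers of $p$-power degree, so the prime-to-$p$ part of $H_1$ survives in the inverse limit; equivalently, this graph is $p$-acyclic but not $q$-acyclic for $q\neq p$. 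Consequently the fixed-point and intersection theorems you invoke control only finite $p$-subgroups, whereas the subgroups you must place into vertex groups --- the decomposition groups $D_1,D_2$ and the nilpotent groups $C$ --- are merely assumed to have non-trivial $p$-Sylow and in general have non-trivial prime-to-$p$ parts, for which no fixed-point theorem on a pro-$p$ tree is available. So the pivotal step ``conjugate $C$ (resp.\ $D_1,D_2$) into a common vertex group by \cite{ZM}'' is unjustified, and this is a deeper obstruction than the induced-topology bookkeeping you flagged at the end: it would remain even after that bookkeeping were done. The paper's solenoid argument sidesteps it precisely because Smith theory is only ever applied to a cyclic group of order $p$ acting on a $p$-acyclic space, while the prime-to-$p$ part of $C$ is captured afterwards by the induction that uses self-normalization of decomposition groups (itself a consequence of the Smith-theoretic step), not by any fixed-point theorem.
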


\begin{proof}From the same argument used to prove item (i) of Lemma~\ref{absolute2},
it follows that $\wh{H}^{(p)}$ is a torsion free group.

In order to prove (ii), let us consider the Galois unramified covering $S_K\ra S$ associated to the
normal finite index subgroup $K$ of $\Pi$ and let $S_H\ra S_K$ be the $p$-covering
associated to the subgroup $H$ of $K$. As in the proof of (i) Lemma~\ref{relative}, the $p$-adic
solenoid $\mS_K^{(p)}$  of $S_K$, i.e.\ the inverse limit of all unramified $p$-coverings of $S_K$, identifies
with the $p$-adic solenoid $\mS_H^{(p)}$ of $S_H$. Since $H$ is a free product
of surface groups, it is $p$-good. Therefore, as in the proof of Lemma~\ref{relative}, there hold
$H^k(\mS_K^{(p)},\Z/p)=H^k(\mS_H^{(p)},\Z/p)=\{0\}$, for $k> 0$, and $H^0(\mS_K^{(p)},\Z/p)=\Z/p$.
Then, the proof proceeds exactly as for Lemma~\ref{relative}.
\end{proof}

\begin{definition}\label{sigma-relative}
For an element $a\in\hP_K^{(p)}$, let $\nu_K(a)$ be the minimal natural number such that there holds
$a^{\nu_K(a)}\in K^{(p)}$. For $\sg=\{\gm_1,\ldots,\gm_h\}\subset\hP_K^{(p)}$ an algebraically simple
subset, let then $\sg_{K,n}^{\hP_K^{(p)}}$, for $n\in\N^+$, be the closed normal subgroup of
$\hP_K^{(p)}$ generated by the elements
$\gm_1^{n\cdot\nu_K(\gm_1)},\ldots,\gm_h^{n\cdot\nu_K(\gm_h)}$.
\end{definition}

We then have the following generalization of Theorem~\ref{magnus}:

\begin{theorem}\label{magnus2}Given an algebraically simple subset
$\sg=\{\gm_1,\ldots,\gm_h\}\subset\hP_K^{(p)}$,
let $y\in\hP_K^{(p)}$ be an element such that $y^{\nu_K(y)}$ is a generator of the pro-$p$ group
$K^{(p)}$ and, for every $n=p^t$, with $t\in\N^+$, there exists a $k_n\in\N^+$ with the property
that there holds $y^{k_n}\in\sg_{K,n}^{\hP_K^{(p)}}$. Then, for some $s\in\ZZ\ssm\{0\}$ and
$i\in\{1,\ldots,h\}$, the element $y$ is conjugated to $\gm_i^s$.
\end{theorem}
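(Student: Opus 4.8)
The plan is to run, in the relative pro-$p$ setting, the very same geometric argument that proves Theorem~\ref{magnus}, with Lemma~\ref{relative2} playing the role of Lemma~\ref{absolute2}. As there, and using Definition~\ref{decomposition2}(iii), I would first apply a suitable automorphism of $\hP_K^{(p)}$ to reduce to the case in which $\sg=\{\gm_1,\ldots,\gm_h\}$ is (the image of) a simple subset of the discrete Fuchsian group $\Pi$, transporting $y$ accordingly. For every $n=p^t$ I then pass to the quotient $Q_n:=\hP_K^{(p)}/\sg_{K,n}^{\hP_K^{(p)}}$. Since, by Definition~\ref{sigma-relative}, each generator of $\sg_{K,n}^{\hP_K^{(p)}}$ has the shape $\gm_i^{n\nu_K(\gm_i)}=(\gm_i^{\nu_K(\gm_i)})^{n}$ with $\gm_i^{\nu_K(\gm_i)}\in K^{(p)}$, the normal subgroup $\sg_{K,n}^{\hP_K^{(p)}}$ lies inside $K^{(p)}$, so that there is a short exact sequence
$$1\lra K^{(p)}/\sg_{K,n}^{\hP_K^{(p)}}\lra Q_n\lra \Pi/K\lra 1.$$

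The geometric heart of the argument is the identification of $Q_n$ with a virtual pro-$p$ nodal Fuchsian group. The factors $\nu_K(\gm_i)$ are chosen precisely so that $\gm_i^{\nu_K(\gm_i)}$ represents a simple closed curve on the covering $S_K\ra S$ associated with $K$; gluing a disc of degree $n$ along each such curve produces an orbifold nodal Riemann surface with cone points of order $n$, and this exhibits $Q_n$ as the relative pro-$p$ completion of a nodal Fuchsian group $\Pi_n$ with respect to the image $K_n$ of $K$, whose decomposition groups are exactly the conjugates of the cyclic groups generated by the images $\bar\gm_i$. I would then verify that $(\Pi_n,K_n)$ satisfies the hypotheses of Lemma~\ref{relative2}, namely the existence of a normal subgroup of $K_n$ of $p$-power index meeting every vertex group in a torsion-free subgroup; this is obtained, exactly as in the proof of Theorem~\ref{magnus}, by pulling back the abelian and then metabelian $p$-power coverings of $S$ and checking that the induced covering is representable over the relevant orbifold points.

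The point where the hypotheses on $y$ enter, and where the argument genuinely departs from the proof of Theorem~\ref{magnus}, is the following. In the absolute case one simply notes that $\bar y$ has finite order and invokes Lemma~\ref{absolute2}(v), which covers \emph{all} finite subgroups; here Lemma~\ref{relative2}(iii) only controls finite nilpotent subgroups whose $p$-component is nontrivial, so I must check that $\bar y\in Q_n$ is finite and has nontrivial $p$-Sylow. Both follow from the assumption that $y^{\nu_K(y)}$ topologically generates $K^{(p)}$: its image generates $K^{(p)}/\sg_{K,n}^{\hP_K^{(p)}}$, and since $K^{(p)}$ is thereby procyclic, the closed subgroup $\sg_{K,n}^{\hP_K^{(p)}}$ (nontrivial, as forced for $n=p^t$ by the hypothesis $y^{k_n}\in\sg_{K,n}^{\hP_K^{(p)}}$ together with $y$ having infinite order) has finite index in it, making $K^{(p)}/\sg_{K,n}^{\hP_K^{(p)}}$ a nontrivial finite $p$-group. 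Hence $\langle\bar y\rangle$ is finite cyclic with $\langle\bar y\rangle^{(p)}\neq\{1\}$, and Lemma~\ref{relative2}(iii) places it inside a decomposition group of $Q_n$, i.e.\ $\bar y=(\bar\gm_i^{s_n})^{\bar x_n}$ for suitable $i$, $s_n$ and $\bar x_n$.

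Finally, as in Theorem~\ref{magnus}, I would take the inverse limit over $n=p^t$, $t\to\infty$: after passing to a cofinal subsequence on which the index $i\in\{1,\ldots,h\}$ is constant, compactness of $\hP_K^{(p)}$ produces a limit conjugator $x$ and a limit exponent $s\in\ZZ\ssm\{0\}$ (nonzero since $y$ has infinite order) with $y=(\gm_i^{s})^{x}$. The main obstacle I anticipate is the geometric identification step: carrying out the disc-gluing and the representability/covering argument \emph{relative} to $K$, so that $Q_n$ is genuinely the relative pro-$p$ completion of a nodal Fuchsian group to which Lemma~\ref{relative2} applies. By contrast, once the procyclicity of $K^{(p)}$ granted by the hypothesis on $y$ is exploited, the finiteness and nontrivial-$p$-component checks are immediate.
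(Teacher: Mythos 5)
Your overall strategy coincides with the paper's: reduce to a simple subset of the discrete group $\Pi$, identify $\hP_K^{(p)}/\sg_{K,n}^{\hP_K^{(p)}}$ with the relative pro-$p$ completion of the nodal Fuchsian group $\Pi/\sg_{K,n}^{\Pi}$ with respect to the image of $K$ (this is the step you flag as the main obstacle, and the paper handles it exactly as you propose, by the argument of Theorem~\ref{magnus}), then apply Lemma~\ref{relative2} and take an inverse limit. The genuine gap is in the step you declare ``immediate''. You read the hypothesis ``$y^{\nu_K(y)}$ is a generator of $K^{(p)}$'' as saying that $y^{\nu_K(y)}$ topologically generates $K^{(p)}$, and you deduce that $K^{(p)}$ is procyclic. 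But $K$ is a finite index subgroup of the (hyperbolic) surface group $\Pi$, hence itself a surface group with $\dim_{\F_p}H_1(K,\Z/p)\geq 2$, so $K^{(p)}$ is never procyclic: under your reading the hypothesis is unsatisfiable and the theorem vacuous, which is incompatible with its later use (Corollary~\ref{magnus3}, Theorem~\ref{magnus5}, Theorem~\ref{embedding}). In this paper ``generator'' of a pro-$p$ group means an element lying outside the Frattini subgroup, i.e.\ a member of a minimal generating set; see Remark~\ref{cofinal} and the proof of Theorem~\ref{magnus5}, where this condition is verified precisely by showing that a certain image is not contained in $\Phi(K_U^{(p)})$. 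With the correct reading, your chain ``procyclic $\Rightarrow$ $\sg_{K,n}^{\hP_K^{(p)}}$ of finite index $\Rightarrow$ the quotient is a nontrivial finite $p$-group $\Rightarrow$ $\bar y$ finite of order divisible by $p$'' has no valid starting point.

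The repair is the paper's short argument. Finiteness of the order of $\bar y$ follows directly from $y^{k_n}\in\sg_{K,n}^{\hP_K^{(p)}}$ and needs no hypothesis on $y^{\nu_K(y)}$. For the nontrivial $p$-part, note that $\sg_{K,n}^{\hP_K^{(p)}}$ is topologically generated by the conjugates $\bigl(\gm_i^{n\cdot\nu_K(\gm_i)}\bigr)^x=\bigl((\gm_i^{\nu_K(\gm_i)})^x\bigr)^{p^t}$, where $(\gm_i^{\nu_K(\gm_i)})^x\in K^{(p)}$ (the latter being normal in $\hP_K^{(p)}$) and $t\geq 1$; all of these lie in $\Phi(K^{(p)})$, so $\sg_{K,n}^{\hP_K^{(p)}}\leq\Phi(K^{(p)})$ and the natural epimorphism $K^{(p)}\tura H_1(K,\Z/p)$ factors through $K^{(p)}/\sg_{K,n}^{\hP_K^{(p)}}$. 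Since $y^{\nu_K(y)}\notin\Phi(K^{(p)})$ by hypothesis, its image in $H_1(K,\Z/p)$ is nontrivial, hence of order $p$; therefore $p$ divides the order of $\bar y^{\nu_K(y)}$ and so the order of $\bar y$. This gives exactly the input that Lemma~\ref{relative2} requires, and the remainder of your outline then goes through as written.
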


\begin{proof}It is not restrictive to assume that $\sg=\{\gm_1,\ldots,\gm_h\}$ is a simple subset of $\Pi$.
Let us then denote by $\sg_{K,n}^{\Pi}$ the normal subgroup of $\Pi$ generated by the elements:
$$\gm_1^{n\cdot\nu_K(\gm_1)},\ldots,\gm_h^{n\cdot\nu_K(\gm_h)}.$$

The quotient group $\Pi/\sg_{K,n}^{\Pi}$ is a nodal Fuchsian group whose decomposition groups are
the conjugacy classes of the subgroups generated by the images of the elements in $\sg$.

By the same argument used in the proof of Theorem~\ref{magnus}, the group $\Pi/\sg_{K,n}^{\Pi}$
contains a normal, torsion free subgroup, which is contained as a subgroup of index a power of $p$ 
in the image of the subgroup $K$. Its relative pro-$p$ completion with respect to the image of $K$ is
then exactly the quotient group $\hP_K^{(p)}/\sg_{K,n}^{\hP_K^{(p)}}$.

For an element $a\in \Pi_K^{(p)}$, let us denote by $\bar{a}$ its image in the quotient group
$\hP_K^{(p)}/\sg_{K,n}^{\hP_K^{(p)}}$.

Since the natural epimorphism $\wh{K}^{(p)}\tura H_1(K,\Z/p)$ factors through the quotient group
$\wh{K}^{(p)}/\sg_{K,n}^{\hP_K^{(p)}}$ and, by hypotheses, the lift $y^{\nu_K(y)}$ is a generator of
the pro-$p$ group $\wh{K}^{(p)}$, the image $\bar{y}$ of $y$ then has finite order divisible by $p$.

From Lemma~\ref{relative2}, it then follows that, for all $n=p^t$, there holds
$\bar{y}\in\langle\bar{\gm}_i\rangle^{\bar{x}}$, for some $i\in\{1,\ldots,h\}$ and some
$\bar{x}\in\hP_K^{(p)}/\sg_{K,n}^{\hP_K^{(p)}}$. By an inverse limit argument, we conclude that
there holds $y\in\langle\gm_i\rangle^x$, for some $i\in\{1,\ldots,h\}$ and some $x\in\hP_K^{(p)}$.
\end{proof}

\begin{corollary}\label{magnus3}Let $K$ be a normal finite index subgroup of $\Pi$ such that,
for every algebraically simple element $x\in\hP_K^{(p)}$, the lift $x^{\nu_K(y)}\in \wh{K}^{(p)}$ is
a generator. Given an algebraically simple subset $\sg=\{\gm_1,\ldots,\gm_h\}\subset\hP_K^{(p)}$,
let $y\in\hP_K^{(p)}$ be an algebraically simple element such that, for every $n=p^t$, with $t\in\N^+$,
there exists a $k_n\in\N^+$ with the property that there holds $y^{k_n}\in\sg_{K,n}^{\hP_K^{(p)}}$.
Then, for some $s\in\ZZ^\ast$ and $i\in\{1,\ldots,h\}$, the element $y$ is conjugated to $\gm_i^s$.
\end{corollary}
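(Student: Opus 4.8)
The plan is to derive the corollary from Theorem~\ref{magnus2} applied twice, once in each direction, and then to exploit the resulting symmetry. As usual, I would first reduce to the case in which $\sg=\{\gm_1,\ldots,\gm_h\}$ is a simple subset of $\Pi$ and $y$ is a simple element. Since $y$ is absolutely simple, the hypothesis on $K$ ensures that $y^{\nu_K(y)}$ is a generator of $\wh{K}^{(p)}$, so $y$ meets the assumptions of Theorem~\ref{magnus2}; combined with the hypothesis $y^{k_n}\in\sg_{K,n}^{\hP_K^{(p)}}$, this produces an index $i$ and an element $s\in\ZZ\ssm\{0\}$ such that $y$ is conjugate to $\gm_i^s$. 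This is the full statement except that $s$ is so far only known to be non-zero, and the entire content of the corollary is the upgrade of this to a unit.

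To that end I would apply Theorem~\ref{magnus2} a second time, now to the singleton absolutely simple set $\{y\}$ and to the test element $\gm_i$, which is itself absolutely simple, so that $\gm_i^{\nu_K(\gm_i)}$ generates $\wh{K}^{(p)}$ by the hypothesis on $K$. The one thing that must be checked is the power condition: for every $n=p^t$ some positive integer power of $\gm_i$ should lie in the normal closure of $y^{n\,\nu_K(y)}$. Using that $y$ is conjugate to $\gm_i^s$, this normal closure coincides with that of $\gm_i^{s\,n\,\nu_K(y)}$, an element lying in the normal pro-$p$ procyclic subgroup $\wh{K}^{(p)}\cong\Z_p$; since the ambient group acts on $\wh{K}^{(p)}$ through $\Z_p^\ast$, this normal closure is exactly $p^a\wh{K}^{(p)}$ with $a=v_p\!\left(s\,n\,\nu_K(y)\right)$, and the explicit integer $k_n=\nu_K(\gm_i)\,p^{a}$ then satisfies $\gm_i^{k_n}\in p^a\wh{K}^{(p)}$. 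Theorem~\ref{magnus2} consequently yields an $s'\in\ZZ\ssm\{0\}$ with $\gm_i$ conjugate to $y^{s'}$.

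Combining the two conjugacies gives that $y$ is conjugate to $y^{ss'}$ and, likewise, that $\gm_i$ is conjugate to $\gm_i^{ss'}$. I would then pass to the finite quotients of $\hP_K^{(p)}$: in each of them the subgroup generated by the image of $\gm_i^{ss'}$ is simultaneously contained in, and conjugate to, the subgroup generated by the image of $\gm_i$, hence, having the same order, equals it. Thus $ss'$ acts invertibly on the procyclic group $\langle\gm_i\rangle$, which the generator condition identifies with $\Z_p\times C$ for a finite cyclic group $C$ of order prime to $p$. Hence $ss'$, and therefore its factor $s$, is a unit in $\Z_p\times C$. Since the reduction map $\ZZ^\ast\to(\Z_p\times C)^\ast$ is surjective, I may finally replace $s$ by a genuine unit of $\ZZ$ without altering $\gm_i^s$, obtaining $y$ conjugate to $\gm_i^s$ with $s\in\ZZ^\ast$, as required.

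The step I expect to be the main obstacle is the second application of Theorem~\ref{magnus2}, and within it the verification of the power condition for $\gm_i$ relative to $\{y\}$: one has to compute the normal closure of the relevant power of $y$ inside $\wh{K}^{(p)}$ and then exhibit an honest positive integer power of $\gm_i$ lying in it, which rests on $\wh{K}^{(p)}$ being normal, procyclic pro-$p$, and acted on through $\Z_p^\ast$. A secondary delicate point is the final passage from a unit of the procyclic quotient $\Z_p\times C$ to a unit of $\ZZ$, which is legitimate only because the induced map on unit groups is surjective.
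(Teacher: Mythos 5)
Your overall strategy --- apply Theorem~\ref{magnus2} in both directions, deduce from $\gm_i\sim\gm_i^{ss'}$ via finite quotients that $ss'$ acts as a unit on the procyclic group $\overline{\langle\gm_i\rangle}\cong\Z_p\times C$, and then lift units along the surjection $\ZZ^\ast\to(\Z_p\times C)^\ast$ --- is sound; the paper in fact states this corollary with no written proof, as an immediate consequence of Theorem~\ref{magnus2}, and your last two paragraphs correctly supply the missing bookkeeping. However, the verification of the power condition for the second application, which you yourself flag as the crux, rests on a false premise: $\wh{K}^{(p)}$ is \emph{not} procyclic. Here $K$ is a finite-index subgroup of the surface group $\Pi$, hence itself a surface group, so its pro-$p$ completion $\wh{K}^{(p)}$ has Frattini quotient $H_1(K,\F_p)$ of dimension at least $2$; it is never isomorphic to $\Z_p$, the ambient group does not act on it through $\Z_p^\ast$, and the normal closure of $\gm_i^{sn\nu_K(y)}$ is not $p^a\wh{K}^{(p)}$. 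The source of the error is a misreading of the hypothesis: in this paper ``$x^{\nu_K(x)}$ is a generator of $\wh{K}^{(p)}$'' means that $x^{\nu_K(x)}$ lies outside the Frattini subgroup $\Phi(\wh{K}^{(p)})$, i.e.\ is part of a minimal generating set (this is exactly the condition produced in the proof of Theorem~\ref{magnus5}), not that it topologically generates $\wh{K}^{(p)}$. Under your reading no subgroup $K$ with the stated property could exist and the corollary would be vacuous, whereas Remark~\ref{cofinal} exhibits such subgroups.

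The claim you were trying to justify is nevertheless true, and the repair is local: compute inside $\overline{\langle\gm_i\rangle}$ rather than inside $\wh{K}^{(p)}$. Since $y=(\gm_i^s)^x$, the normal closure of $y^{n\nu_K(y)}$ coincides with that of $\gm_i^{sn\nu_K(y)}$, and in particular contains the closed subgroup $\overline{\langle\gm_i^{sn\nu_K(y)}\rangle}$ of $\overline{\langle\gm_i\rangle}\cong\Z_p\times C$ (this isomorphism needs only that $\gm_i^{\nu_K(\gm_i)}$ be a non-trivial element of the torsion-free pro-$p$ group $\wh{K}^{(p)}$, so it survives the corrected reading of ``generator''). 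The $p$-component $s_p$ of $s$ is non-zero, since otherwise $\gm_i^s$, and hence $y$, would have finite order, contradicting that $y^{\nu_K(y)}\notin\Phi(\wh{K}^{(p)})$ in a torsion-free group. Thus $a:=v_p(s_pn\nu_K(y))$ is finite, and the positive integer $k_n:=|C|\cdot p^{a}$ (your choice $\nu_K(\gm_i)p^a$ works equally well) has image in $\Z_p\times C$ lying in the closed subgroup generated by the image of $sn\nu_K(y)$; hence $\gm_i^{k_n}\in\overline{\langle\gm_i^{sn\nu_K(y)}\rangle}$, which is exactly what the second application of Theorem~\ref{magnus2} requires. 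With this substitution your proof is complete, and it follows what is presumably the intended (unwritten) argument of the paper.
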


\begin{remark}\label{cofinal}In Lemma~3.10 \cite{sym}, it is defined a
characteristic finite index subgroup $K_\ell$ of $\Pi$ such that, for every simple element $x\in\Pi$,
the lift $x^{\nu_{K_\ell}(y)}\in K_\ell$ is a generator.
The definition of this group can be rephrased, group theoretically as follows. Let $K$ be a finite index
characteristic subgroup of $\Pi$ such that, for all simple (and so for all algebraically simple) elements 
$\gm\in\Pi$, there holds $\gm\notin K$. For a given integer $\ell>1$, we let then $K_\ell:=[K,K]K^\ell$.
For any algebraically simple element $x\in\hP_{K_\ell}^{(p)}$, the lift
$x^{\nu_{K_\ell}(y)}\in\wh{K}_\ell^{(p)}$ is then a generator. It follows that any normal finite index
subgroup $N$ of $\Pi$, contained in $K_\ell$, satisfies the hypothesis of Corollary~\ref{magnus3}.
\end{remark}

Corollary~\ref{magnus3} and Remark~\ref{cofinal} yield a substantial refinement of
Theorem~\ref{magnus}. We need to fix some more notations.

\begin{definition}\label{sigma-relative2}
For an open normal subgroup $K$ of $\hP$ and $a\in\hP$, let $\nu_K(a)$ be the minimal
natural number such that $a^{\nu_K(a)}\in K$.
For an algebraically simple subset $\sg=\{\gm_1,\ldots,\gm_h\}\subset\hP$,
let then $\sg_{K,n}^{\hP}$, for $n\in\N^+$, be the closed normal subgroup of $\hP$
generated by the elements $\gm_1^{n\cdot\nu_K(\gm_1)},\ldots,\gm_h^{n\cdot\nu_K(\gm_h)}$.
\end{definition}

Let $\hP$ be the profinite completion of a hyperbolic surface
group $\Pi$ and $\hP_K^{(p)}$ its relative pro-$p$ completion with respect to some normal finite index
subgroup $K$. There is then a natural epimorphism:
$$\psi_K^{(p)}\co\hP\ra\hP_K^{(p)}.$$
From an inverse limit argument, Corollary~\ref{magnus3} and Remark~\ref{cofinal}, it follows:

\begin{corollary}\label{magnus4}Let $\sg=\{\gm_1,\ldots,\gm_h\}\subset\hP$ be an algebraically simple
subset and $p$ a fixed prime. Let $y\in\hP$ be an algebraically simple element such that, for a cofinal
system of open normal subgroups $\{K\}$ of $\hP$ and every $n=p^t$, with $t\in\N^+$, there exists a
$\mu_{K,n}\in\N^+$ with the property that there holds
$\psi_K^{(p)}(y)^{\mu_{K,n}}\in\psi_K^{(p)}(\sg_{K,n}^{\hP})$.
Then, for some $s\in\ZZ^\ast$ and $i\in\{1,\ldots,h\}$, the element $y$ is conjugated to $\gm_i^s$.
\end{corollary}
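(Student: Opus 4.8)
The plan is to derive Corollary~\ref{magnus4} from Corollary~\ref{magnus3} by passing from the full profinite completion $\hP$ to the relative pro-$p$ completions $\hP_K^{(p)}$ and then taking an inverse limit over a cofinal system of open normal subgroups $K$. The essential observation is that the hypothesis in Corollary~\ref{magnus4} is phrased precisely so that, after applying each epimorphism $\psi_K^{(p)}\co\hP\ra\hP_K^{(p)}$, the element $\psi_K^{(p)}(y)$ satisfies the hypotheses required by Corollary~\ref{magnus3} at the level of $\hP_K^{(p)}$. So the first step is to restrict attention to a cofinal system $\{K\}$ of open normal subgroups that are moreover contained in some $K_\ell$ as in Remark~\ref{cofinal}; such a cofinal system exists because the $K_\ell$ themselves form a cofinal family and we may intersect any given $K$ with an appropriate $K_\ell$. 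For every such $K$, Remark~\ref{cofinal} guarantees that the lift of any absolutely simple element to $\wh{K}^{(p)}$ is a generator, which is exactly the standing hypothesis of Corollary~\ref{magnus3}.

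Next I would verify that the images $\psi_K^{(p)}(y)$ and $\psi_K^{(p)}(\gm_i)$ remain absolutely simple in $\hP_K^{(p)}$, and that $\psi_K^{(p)}(\sg_{K,n}^{\hP})=\sg_{K,n}^{\hP_K^{(p)}}$ with the $\nu_K$ and $\sg_{K,n}$ of Definition~\ref{sigma-relative2} matching those of Definition~\ref{sigma-relative} under $\psi_K^{(p)}$. This compatibility is essentially formal: $\psi_K^{(p)}$ is surjective, so it carries the normal closure of the generating powers onto the normal closure of their images, and the quantity $\nu_K(a)$ is defined identically on both sides since $\psi_K^{(p)}$ restricts to the canonical map $\wh{K}\ra\wh{K}^{(p)}$. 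Absolute simplicity is preserved because $\psi_K^{(p)}$ is a profinite completion map intertwining the respective automorphism groups, so the $\aut(\hP)$--orbit of a simple set maps into the $\aut(\hP_K^{(p)})$--orbit of its image. Granting these identifications, the hypothesis $\psi_K^{(p)}(y)^{\mu_{K,n}}\in\psi_K^{(p)}(\sg_{K,n}^{\hP})$ becomes exactly $y_K^{k_n}\in\sg_{K,n}^{\hP_K^{(p)}}$ for $y_K:=\psi_K^{(p)}(y)$, so Corollary~\ref{magnus3} applies and yields, for each $K$ in the cofinal system, an index $i(K)\in\{1,\ldots,h\}$ and a unit $s_K\in\ZZ^\ast$ such that $y_K$ is conjugate to $\gm_{i(K)}^{s_K}$ in $\hP_K^{(p)}$.

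The final step is the inverse limit argument. Since the index set $\{1,\ldots,h\}$ is finite and the cofinal system $\{K\}$ is directed, one may pass to a cofinal subsystem on which the index $i(K)=i$ is constant; the compatibility of the conjugacy data across the maps $\hP_K^{(p)}\ra\hP_{K'}^{(p)}$ for $K\subseteq K'$ then produces a coherent family, whose inverse limit gives an element $x\in\hP=\ilim_K\hP_K^{(p)}$ and a unit $s\in\ilim_K\ZZ^\ast$ conjugating $y$ to $\gm_i^s$ in $\hP$. Here one uses that $\hP$ is recovered as the inverse limit of its relative pro-$p$ completions $\hP_K^{(p)}$ as $K$ runs over a cofinal system and $p$ over all primes, together with the standard compactness of the relevant conjugating sets. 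I expect the main obstacle to be precisely this coherence: showing that the conjugators $x_K$ and the units $s_K$ can be chosen compatibly across the transition maps, rather than merely existing individually. The self-normalizing property of decomposition groups established in Proposition~\ref{normalizers} and item~(ii) of Lemma~\ref{relative2} is what makes the conjugacy class data rigid enough to glue, since it forces the conjugator to be well-defined modulo the pro-cyclic normalizer and hence the inverse system of conjugators to have nonempty inverse limit by compactness.
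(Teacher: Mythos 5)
Your proposal is correct and takes essentially the same route as the paper, whose entire proof of Corollary~\ref{magnus4} is the one-line derivation from Corollary~\ref{magnus3}, Remark~\ref{cofinal} and an inverse limit argument --- precisely the ingredients you spell out (the compatibility $\psi_K^{(p)}(\sg_{K,n}^{\hP})=\sg_{K,n}^{\hP_K^{(p)}}$ via $\nu_K(a)=\nu_K(\psi_K^{(p)}(a))$, the reduction to a constant index $i$ by finiteness of $\{1,\ldots,h\}$, and the compactness gluing of conjugators and exponents in $\hP\times(\ZZ)^\ast$). One slip that does not damage the structure: the subgroups $K_\ell$ of Remark~\ref{cofinal} do \emph{not} form a cofinal family (their intersection contains $[K,K]$); what one actually uses is that a single $K_\ell$ suffices, that the hypothesis of the corollary passes from any member $K'$ of the given cofinal system to every open normal subgroup $K\supseteq K'$ (since $\nu_{K}(\gm_i)\mid\nu_{K'}(\gm_i)$ gives $\sg_{K',n}^{\hP}\subseteq\sg_{K,n}^{\hP}$), hence holds for \emph{all} open normal subgroups, and that one may therefore work with an $\aut(\hP)$-invariant cofinal system inside $\wh{K}_\ell$ --- which is also what makes your assertion that absolute simplicity descends along $\psi_K^{(p)}$ legitimate, since a general automorphism of $\hP$ only induces one of $\hP_K^{(p)}$ when it preserves $\wh{K}$.
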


The assumption that the element $y$ is algebraically simple can be dropped reformulating
Corollary~\ref{magnus4} for relative pro-nilpotent, instead of relative pro-$p$, completions.
Let us denote by $\psi_K^{nil}\co\hP\ra\hP_K^{nil}$ the natural epimorphism. It holds:

\begin{theorem}\label{magnus5}Let $\sg=\{\gm_1,\ldots,\gm_h\}\subset\hP$ be an algebraically simple
subset. Let $y\in\hP$ be an element such that, for a cofinal system of open normal subgroups $\{K\}$
of $\hP$ and every $n\in\N^+$, there exists a $\mu_{K,n}\in\N^+$ with the property that
$\psi_K^{nil}(y)^{\mu_{K,n}}\in\psi_K^{nil}(\sg_{K,n}^{\hP})$.
Then, for some $s\in\ZZ$ and $i\in\{1,\ldots,h\}$, the element $y$ is conjugated to $\gm_i^s$.
\end{theorem}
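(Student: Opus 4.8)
The plan is to transpose the argument of Corollary~\ref{magnus4} to the relative pro-nilpotent completions $\hP_K^{nil}$, the point being that in this setting the absolute simplicity of $y$ becomes superfluous. Exactly as in the proof of Theorem~\ref{magnus2}, for each $K$ in the given cofinal system and each $n\in\N^+$ I would form the quotient $\hP_K^{nil}/N_{K,n}$, where $N_{K,n}$ is the closed normal subgroup of $\hP_K^{nil}$ generated by $\psi_K^{nil}(\sg_{K,n}^{\hP})$, and identify it with the relative pro-nilpotent completion of the nodal Fuchsian group $\Pi/\sg_{K,n}^{\Pi}$ with respect to the image of $K$. The representability input required for this identification is precisely the metabelian covering $S_n''\ra S_n$ constructed in the proof of Theorem~\ref{magnus}, now taken over the full set of primes rather than over $\Ld_\cS$ restricted to a single prime. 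By the hypothesis on $y$, its image $\bar y$ in $\hP_K^{nil}/N_{K,n}$ satisfies $\bar y^{\mu_{K,n}}=1$, so that $\langle\bar y\rangle$ is a finite cyclic, hence nilpotent, subgroup.

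The key new ingredient is a pro-nilpotent analogue of Lemma~\ref{relative2}: \emph{every non-trivial finite nilpotent subgroup $C$ of such a relative pro-nilpotent completion is contained in a decomposition group.} To establish it I would let $C$ act on the associated solenoid $\mS$, which by the cohomological computation of Lemma~\ref{absolute} (carried out now over all primes) is $p$-acyclic for every prime $p$. Writing $C=\prod_p C^{(p)}$ as the product of its Sylow subgroups, profinite Smith theory (the results of \cite{Scheiderer} together with item (b) of Theorem~10.5, Chap.~VII \cite{Brown}, as already invoked in Lemmas~\ref{absolute} and \ref{relative2}) shows that, for each prime $p$ with $C^{(p)}\neq\{1\}$, the fixed locus $\mS^{C^{(p)}}$ is $p$-acyclic, hence connected, and therefore a single point $P_p$, since the points of $\mS$ with non-trivial isotropy form a profinite, hence $0$-dimensional, subset. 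Because $C$ is nilpotent, the other Sylow subgroups $C^{(q)}$, $q\neq p$, commute with $C^{(p)}$ and so act on the singleton $\mS^{C^{(p)}}=\{P_p\}$; thus $P_p$ is fixed by all of $C$, and $C$ is contained in the decomposition group $\mathrm{Stab}(P_p)$. The self-normalizing property and the fact that two distinct decomposition groups meet trivially then follow, prime by prime, from Lemma~\ref{relative2}.

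With this lemma in hand the argument of Theorem~\ref{magnus2} goes through verbatim: applied to $\langle\bar y\rangle$ it yields that $\bar y$ is conjugate in $\hP_K^{nil}/N_{K,n}$ to a power $\bar\gm_i^{\,s_{K,n}}$ of the image of some $\gm_i$ (when $\bar y=1$ one simply records $s_{K,n}\equiv 0$). The decisive difference with Theorem~\ref{magnus2} and Corollary~\ref{magnus4} is that I never need $\bar y$ to have order divisible by a prescribed prime: it was exactly this requirement, secured through absolute simplicity by the subgroups $K_\ell$ of Remark~\ref{cofinal}, that the commuting-Sylow argument of the second paragraph renders unnecessary. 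This is why the hypothesis on $y$ can be dropped and why the exponent $s$ is now allowed to range over all of $\ZZ$ rather than over $\ZZ^\ast$.

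Finally I would pass to the inverse limit over the cofinal system $\{K\}$ and over $n\ra\infty$. Since every finite quotient of $\hP$ is dominated by some $\hP_K^{nil}$ and $N_{K,n}\ra\{1\}$ as $n\ra\infty$, one has $\ilim\,\hP_K^{nil}/N_{K,n}\cong\hP$, and in the limit $y$ is conjugated to $\gm_i^s$ for a single $i\in\{1,\ldots,h\}$ and some $s\in\ZZ$. I expect the main obstacle to lie precisely in this limit step: one must choose, compatibly across the whole inverse system, a single index $i$ and coherent conjugating elements so that the exponents $s_{K,n}$ assemble into a well-defined $s\in\ZZ$. As $i$ ranges over the finite set $\{1,\ldots,h\}$, this is handled by the same compactness argument on conjugacy data used at the end of the proofs of Theorems~\ref{magnus} and \ref{magnus2}; the pro-nilpotent decomposition lemma is what makes that scheme run uniformly in all primes at once.
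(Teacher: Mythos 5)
The decisive step of your argument --- the pro-nilpotent analogue of Lemma~\ref{relative2}, which you base on the claim that the solenoid attached to a relative pro-nilpotent completion is $p$-acyclic for every prime $p$ --- is unsound, and this is a genuine gap. The acyclicity computations in Lemmas~\ref{absolute}, \ref{relative} and \ref{relative2} use, in an essential way, that the defining class of finite quotients is closed under extensions (this is exactly why the paper's standing definition of a class of finite groups demands extension-closure): that closure is what guarantees that the relevant topology of $K$ induces on each open subgroup $L$ its \emph{full} pro-$\cS$ (resp.\ pro-$p$) topology, so that a class in $H^k(L,\Z/p)$ can be killed by passing to a smaller subgroup which is still open in the topology of $K$, whence $\varinjlim_L H^k(L,\Z/p)=0$ for $k>0$. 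Finite nilpotent groups are \emph{not} closed under extensions, and the computation genuinely fails for them. Indeed, set $D:=\ker(\wh{K}\ra\wh{K}^{nil})=\ker(\wh{K}\ra\prod_q\wh{K}^{(q)})$ and let $\mS_K^{nil}:=\varprojlim_L S_L$, where $L$ runs over the subgroups of $K$ open in its pro-nilpotent topology; by goodness of the surface groups $L$ and continuity of profinite cohomology one gets
\[
H^k(\mS_K^{nil},\Z/p)\cong\varinjlim_{L}H^k(L,\Z/p)\cong H^k_{\mathrm{cont}}(D,\Z/p).
\]
The closed normal subgroup $D$ is far from trivial: composing with any continuous surjection $\wh{K}\tura S_3$, the image of $D$ is a normal subgroup of $S_3$ with nilpotent quotient, hence contains $A_3$; therefore $D$ surjects continuously onto $\Z/3$ or onto $S_3$, and so $H^1_{\mathrm{cont}}(D,\Z/p)\neq 0$ for $p=3$ or $p=2$. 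Thus the pro-nilpotent solenoid is not $p$-acyclic for all primes, the Smith-theory step (fixed points of $C^{(p)}$ forming a single point) has no foundation, and with it falls your key lemma on finite nilpotent subgroups --- whose truth, at least by this route, is not established.

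For comparison, the paper's proof avoids pro-nilpotent solenoids altogether, and its mechanism shows what the pro-nilpotent hypothesis is really for. Since $\hP_K^{(p)}$ is a quotient of $\hP_K^{nil}$, the hypothesis of Theorem~\ref{magnus5} implies, for every prime $p$ and every $K$ in the cofinal system, the membership hypothesis of Theorem~\ref{magnus2}; what is missing in order to apply Theorem~\ref{magnus2} is only that $y^{\nu_K(y)}$ be a generator of $K^{(p)}$ (i.e.\ lie outside $\Phi(K^{(p)})$), which in Corollaries~\ref{magnus3} and~\ref{magnus4} was secured by the absolute simplicity of $y$ via Remark~\ref{cofinal}. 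The paper replaces that input as follows: since $\hP$ is torsion free, for $y\neq 1$ the procyclic group $\overline{\langle y\rangle}$ is infinite, so its $p$-Sylow subgroup $Y_p$ is infinite for \emph{some} prime $p$; one then chooses an open subgroup $U$ of $\hP$ in which a generator $y_p$ of $Y_p$ generates $U^{(p)}$ (possible because $Y_p$ is the intersection of the open subgroups containing it), takes $K_U$ of the cofinal system inside $U$, and checks that the image of $Y_p\cap K_U$ in $K_U^{(p)}$ avoids $\Phi(K_U^{(p)})$, i.e.\ that $\psi_{K_U}^{(p)}(y)^{\nu_{K_U}(y)}$ generates $K_U^{(p)}$. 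Theorem~\ref{magnus2} at this single, $y$-adapted prime, followed by the usual inverse limit argument, then concludes. In short, the way the paper drops absolute simplicity is not to redo the geometry pro-nilpotently --- the extension-closure obstruction above forbids exactly that --- but to use the pro-nilpotent form of the hypothesis to select, for each $y$, one good prime and good levels at which $y$ behaves like a generator; your proposal would need to be restructured along these lines.
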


\begin{proof} Let us assume that $y\neq 1$. Since $y$ has infinite
order a Sylow $p$-subgroup $Y_p$ of $\overline{\langle y\rangle}$
is infinite for some prime $p$. Let $y_p$ be a generator of $Y_p$
and $U$  an open subgroup of $\hP$ such that $y_p$ is a generator
of the maximal pro-$p$ quotient $U^{(p)}$ of $U$. Such a $U$
exists because $Y_p$ is the intersection of all open subgroups
containing it. Let $K_U$ be a subgroup in the cofinal system $\{K\}$
contained in $U$. Then its image $\td{K}_U$ in
$U^{(p)}$ intersects $\overline{\langle y_p\rangle}$ non-trivially
and is such that $\td{K}_U\cap \overline{\langle y_p\rangle}\not\leq \Phi(\td{K}_U)$.
Therefore, since  $\td{K}_U$ is a quotient of $K_U^{(p)}$, the image
of $Y_p\cap K_U$ in $K_U^{(p)}$ is not contained
in $\Phi(K_U^{(p)})$. This means that
$\psi_{K_U}^{(p)}(y)^{\nu_{K_U}(y)}$ is a generator of $K_U^{(p)}$.
Then, the conclusion follows from Theorem~\ref{magnus2} and the usual inverse limit argument.
\end{proof}

Proceeding as in the proof of Theorem~\ref{magnus2}, we can also describe normalizers of 
algebraically simple elements in the relative pro-$p$ completion of a surface group:

\begin{proposition}\label{centrel}Let $\hP^{(p)}_K$ be the relative pro-$p$ completion of a non-abelian surface
group $\Pi$ with respect to some normal finite index subgroup $K$ and let $x$ be an algebraically simple
element of $\hP^{(p)}_K$. Then, for all $n\in\ZZ\ssm\{0\}$, there holds:
$$N_{\hP_K^{(p)}}(\langle x^n\rangle)=N_{\hP_K^{(p)}}(\langle x\rangle)=\langle x\rangle.$$
\end{proposition}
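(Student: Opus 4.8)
The plan is to run the argument of Proposition~\ref{normalizers} in the relative pro-$p$ setting, replacing Lemma~\ref{absolute2} by Lemma~\ref{relative2} and using the nodal Fuchsian quotients constructed in the proof of Theorem~\ref{magnus2}. Since $x$ is absolutely simple and both the normalizer and the conclusion are preserved by $\aut(\hP_K^{(p)})$, I would first reduce to the case where $x$ is a genuine simple element of the (necessarily hyperbolic) surface group $\Pi$. Put $m:=\nu_K(x)$, the least positive integer with $x^m\in K$.

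For each $t\in\N^+$ I would form the quotient $\Xi_t:=\Pi/\langle x^{mp^t}\rangle^\Pi$. Exactly as in the proof of Theorem~\ref{magnus2}, the normal closure $\langle x^{mp^t}\rangle^\Pi$ lies in $K$, the group $\Xi_t$ is a nodal Fuchsian group whose decomposition groups are the conjugates of the cyclic group generated by the image of $x$, and $\Xi_t$ contains a normal torsion free subgroup of $p$-power index inside the image of $K$; hence $\Xi_t$ satisfies the hypotheses of Lemma~\ref{relative2}, and its relative pro-$p$ completion with respect to the image of $K$ is the quotient $\hP_K^{(p)}/\langle x^{mp^t}\rangle^{\hP_K^{(p)}}$. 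Writing $C_t$ for the image of $\overline{\langle x\rangle}$ in this completion, $C_t$ is a finite cyclic decomposition group.

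The decisive point is that $C_t^{(p)}\neq\{1\}$, so that Lemma~\ref{relative2} becomes applicable to this quotient. Here I would use that the finite index subgroup $K$ is itself a surface group, hence residually $p$ with torsion free pro-$p$ completion $K^{(p)}$: thus the image of $x^m$ in $K^{(p)}$ is a nontrivial element of infinite order, generating a copy of $\Z_p$, and consequently its images in the finite $p$-group quotients $\hP_K^{(p)}/\langle x^{mp^t}\rangle^{\hP_K^{(p)}}$ have $p$-power order tending to infinity; in particular $C_t^{(p)}\neq\{1\}$ for all large $t$. Granting this, Lemma~\ref{relative2}(ii) gives $N(C_t)=N(C_t^{(p)})=C_t$ in the quotient. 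Moreover, by parts (iii)--(iv) of the same lemma every nontrivial finite nilpotent subgroup with nontrivial $p$-component lies in a \emph{unique} decomposition group; since $C_t^n$ is such a subgroup (again for $t$ large with respect to the $p$-adic valuation of $n$) and is contained in $C_t$, any element normalizing $C_t^n$ must normalize $C_t$, whence $N(C_t^n)=C_t$ as well.

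Finally I would pass to the inverse limit over $t\to\infty$. As in Proposition~\ref{normalizers}, the chain $\{\langle x^{mp^t}\rangle^{\hP_K^{(p)}}\}_t$ is descending with trivial intersection, so $\varprojlim_t \hP_K^{(p)}/\langle x^{mp^t}\rangle^{\hP_K^{(p)}}$ recovers $\hP_K^{(p)}$ while $\varprojlim_t C_t$ recovers the pro-cyclic group $\langle x\rangle$; the compatible identities $N(C_t)=N(C_t^n)=C_t$ then yield $N_{\hP_K^{(p)}}(\langle x\rangle)=N_{\hP_K^{(p)}}(\langle x^n\rangle)=\langle x\rangle$, the general profinite exponent $n\in\ZZ\ssm\{0\}$ being reduced to the finite quotients by means of the pro-cyclic structure of $\langle x\rangle$, all of whose closed subgroups are characteristic. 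The step I expect to be the main obstacle is precisely the nonvanishing of $C_t^{(p)}$: one must ensure that killing the power $x^{mp^t}$ does not annihilate the $p$-part of the decomposition group, and it is here that the residual $p$-ness and the torsion freeness of the pro-$p$ surface group $K^{(p)}$ are indispensable.
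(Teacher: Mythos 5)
Your proposal is correct and follows essentially the same route as the paper's own proof: reduce to a simple element of $\Pi$, pass to the quotients $\Xi_t=\Pi/\langle x^{mp^t}\rangle^{\Pi}$ whose relative pro-$p$ completions are governed by Lemma~\ref{relative2} (with the decomposition group $C_t$ having non-trivial $p$-component, and with $t$ taken large relative to the $p$-adic valuation of $n$ --- exactly the paper's condition $p^h\nmid n$), and conclude by the inverse limit argument. The only point where you elaborate beyond the paper is your sketch justifying $C_t^{(p)}\neq\{1\}$ via residual $p$-ness of $K$, a fact the paper simply asserts.
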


\begin{proof}We can assume that $x$ is a simple element of $\Pi$. Let then $k>0$ the smallest integer
such that $x^k\in K$. The quotient group $\Xi_h:=\Pi/\langle x^{p^{hk}}\rangle^\Pi$ is a nodal Fuchsian
group which satisfies the hypotheses of Lemma~\ref{relative2} with respect to the normal subgroup
$K_h$, image of $K$ in the quotient group $\Xi_h$. Let $x_h$ be the image of $x$ in $\Xi_h$. Then,
the cyclic group $C_h$ generated by $x_h$ is a decomposition group of $\Xi_h$ with non-trivial
$p$-component for $h>0$.

From Lemma~\ref{relative2}, it follows that in the relative pro-$p$ completion $\Xi_{K_h}^{(p)}$ of
$\Xi_h$ with respect to the subgroup $K_h$, for $h>0$ and $p^h\nmid n$, there holds:
$$N_{\Xi_{K_h}^{(p)}}(C_h)=N_{\Xi_{K_h}^{(p)}}(C_h^n)=C_h.$$
The conclusion of the proposition then follows taking the inverse limit for $h\ra\infty$.
\end{proof}

\section[The complex of profinite curves]{A linearization of the complex of profinite curves.}
\label{linearization}
Let $S_g$ be a closed orientable Riemann surface of genus $g$ and let $\{P_1,\ldots, P_{n}\}$
be a set of distinct points on $S_g$.  The Teichm\"uller modular group $\GG_{g,n}$, for
$2g-2+n>0$, is defined to be the group of isotopy classes of diffeomorphisms or, equivalently,
of homeomorphisms of the surface $S_g$ which preserve the orientation and the
given ordered set $\{P_1,\ldots, P_{n}\}$ of marked points:
$$\GG_{g,n}:=\operatorname{Diff}^+(S_g,n)/\operatorname{Diff}_0(S_g,n)
\cong\hom^+(S_g,n)/\hom_0(S_g,n),$$
where $\operatorname{Diff}_0(S_g,n)$ and $\hom_0(S_g,n)$ denote the
connected components of the identity in the respective topological groups.

Forgetting the last marked point $P_{n+1}$ induces an epimorphism of Teichm\"uller modular groups
$p_{n+1}\co\GG_{g,n+1}\ra\GG_{g,n}$.

Let $S_{g,n}$ be the differentiable surface obtained removing the
points $P_1,\ldots, P_n$ from $S_g$ and let $\Pi_{g,n}:=\pi_1(S_{g,n},P_{n+1})$.
The homomorphism $p_{n+1}$ induces a short exact sequence of Teichm\"uller modular groups,
called \emph{the Birman exact sequence}:
$$1\ra\Pi_{g,n}\sr{i}{\ra}\GG_{g,n+1}\sr{p_{n+1}}{\ra}\GG_{g,n}\ra 1.$$

The monomorphism $i\co\Pi_{g,n}\hookra\GG_{g,n+1}$ sends the isotopy class of a
$P_{n+1}$-pointed oriented closed curve $\gm$ to the isotopy class of the homeomorphism
$i(\gm)$ defined pushing the base point $P_{n+1}$ all along the path $\gm$ in the direction
given by the orientation of $\gm$. It is clear that $i(\gm)$ is isotopic to the identity for
isotopies which are allowed to move the base point $P_{n+1}$ and then that $p_{n+1}(i(\gm))=1$.

There are natural faithful representations, induced by the action of homeomorphisms on the
fundamental group of the Riemann surface $S_{g,n}$:
$$\rho_{g,n}\co\GG_{g,n}\hookra\out(\Pi_{g,n})\hspace{1cm}\mathrm{and}\hspace{1cm}
\rho'_{g,n+1}\co\GG_{g,n+1}\hookra\aut(\Pi_{g,n}).$$

Since the automorphism $\rho'_{g,n+1}(i(\gm))$ is the inner automorphism $\mathrm{inn}\,\gm$,
induced by $\gm$, for all elements $\gm\in\Pi_{g,n}$, the exactness of the Birman sequence then
follows from the exactness of the standard group-theoretical short exact sequence:
$$1\ra\Pi_{g,n}\sr{\mathrm{inn}}{\ra}\aut(\Pi_{g,n})\ra\out(\Pi_{g,n})\ra 1.$$

It also follows that the representations $\rho_{g,n}$ and $\rho'_{g,n+1}$ can be recovered,
algebraically, from the Birman exact sequence and the action by restriction of inner automorphisms
of $\GG_{g,n+1}$ on its normal subgroup $\Pi_{g,n}$.

Let us now switch to the profinite setting. Let, as usual, $\hP_{g,n}$ be the profinite completion
of the fundamental group $\Pi_{g,n}$. Since the profinite group $\hP_{g,n}$ is center-free,
there is also a natural short exact sequence:
$$1\ra\hP_{g,n}\sr{\mathrm{inn}}{\ra}\aut(\hP_{g,n})\ra\out(\hP_{g,n})\ra 1.$$

Let us mention here a fundamental result of Nikolov and Segal \cite{N-S} which asserts that any
finite index subgroup of any topologically finitely generated profinite group $G$ is open.
Since such a profinite group $G$ has also a basis of neighborhoods of the identity consisting of
open characteristic subgroups, it follows that all automorphisms of $G$ are continuous and that
$\mbox{Aut}(G)$ is a profinite group as well.

Let $\hGG_{g,n}$, for $2g-2+n>0$, be the profinite completion of the Teichm\"uller modular group.
From the universal property of the profinite completion, it follows that there are natural representations:
$$\hat{\rho}_{g,n}\co\hGG_{g,n}\ra\out(\hP_{g,n})\hspace{1cm}\mathrm{and}\hspace{1cm}
\hat{\rho}'_{g,n+1}\co\hGG_{g,n+1}\ra\aut(\hP_{g,n}).$$

Let us then recall a few definitions from \cite{boggi}.

\begin{definition}\label{geometric}For $2g-2+n>0$, let the profinite groups $\tGG_{g,n+1}$
and $\cGG_{g,n}$ be, respectively, the image of $\hat{\rho}'_{g,n+1}$ in $\mbox{Aut}(\widehat{\Pi}_{g,n})$ and 
of $\hat{\rho}_{g,n}$ in $\mbox{Out}(\widehat{\Pi}_{g,n})$. For all $n\geq 0$, there is a natural isomorphism 
$\cGG_{g,n+1}\sr{\sim}{\ra}\tGG_{g,n+1}$ (cf.\ Lemma~20 in  \cite{Hoshi}). We then call 
$\cGG_{g,n}$ \emph{the congruence completion of the Teichm\"uller group} 
or, more simply, \emph{the procongruence Teichm\"uller group}.
\end{definition}

One of the most important objects in Teichm\"uller theory is the complex of curves:

\begin{definition}\label{curve-complex} A simple closed curve (s.c.c.) $\gm$ on the Riemann
surface $S_{g,n}$ is \emph{non-peripheral} if it does not bound a disc with less than two punctures.
\emph{A multi-curve} $\sg$ on $S_{g,n}$ is a set of disjoint, non-trivial, non-peripheral
s.c.c.'s on $S_{g,n}$, such that they are two by two non-isotopic.
The complex of curves $C(S_{g,n})$ is the abstract simplicial complex whose simplices are
isotopy classes of multi-curves on $S_{g,n}$.
\end{definition}

It is easy to check that the combinatorial dimension of $C(S_{g,n})$ is
$n-4$ for $g=0$ and $3g-4+n$ for $g\geq 1$.
There is a natural simplicial action of $\GG_{g,n}$ on $C(S_{g,n})$.

In order to construct a profinite version of the complex of curves, we need to reformulate its
definition in more algebraic terms.

Let $\cL_{g,n}=C(S_{g,n})_0$, for $2g-2+n>0$, be the set of isotopy classes of
non-peripheral simple closed curves on $S_{g,n}$. Let $\Pi_{g,n}/\!\sim$ be the set of conjugacy
classes of elements of $\Pi_{g,n}$ and let
$\cP_2(\Pi_{g,n}/\!\sim)$ be the set of unordered pairs of elements of $\Pi_{g,n}/\!\sim$.

For a given $\gm\in\Pi_{g,n}$, let us denote by $\gm^{\pm 1}$ the set $\{\gm,\gm^{-1}\}$
and by $[\gm^{\pm 1}]$ its equivalence class in $\cP_2(\Pi_{g,n}/\!\sim)$. Let us then define the
natural embedding $\iota\co\cL_{g,n}\hookra\cP_2(\Pi_{g,n}/\!\sim)$, choosing, for
an element $\gm\in\cL_{g,n}$, an element $\vec{\gm}_\ast\in\Pi_{g,n}$ whose free homotopy class
contains $\gm$ and letting $\iota(\gm):=[\vec{\gm}_\ast^{\pm 1}]$.

Let $\hP_{g,n}/\!\sim$ be the set of conjugacy classes of elements
of $\hP_{g,n}$ and $\cP_2(\hP_{g,n}/\!\sim)$ the profinite set of
unordered pairs of elements of $\hP_{g,n}/\!\sim$. Since
$\Pi_{g,n}$ is conjugacy separable (cf.\ \cite{stebe})  the set
$\Pi_{g,n}/\!\sim$ embeds in the profinite set $\hP_{g,n}/\!\sim$.
So, let us define the set of \emph{non-peripheral profinite
s.c.c.'s} $\hL_{g,n}$ on $S_{g,n}$ to be the closure of the set
$\iota(\cL_{g,n})$ inside the profinite set
$\cP_2(\hP_{g,n}/\!\sim)$. When it is clear from the context, we
omit the subscripts and denote these sets simply by $\cL$ and $\hL$.

An ordering of the set $\{\alpha,\alpha^{-1}\}$ is preserved by the conjugacy action and
defines \emph{an orientation} for the associated equivalence class $[\alpha^{\pm1}]\in\hL$.

For all $k\geq 0$, there is a natural embedding of the set $C(S_{g,n})_{k-1}$ of isotopy classes of
multi-curves on $S_{g,n}$ of cardinality $k$ into the profinite set $\cP_k(\hL)$
of unordered subsets of $k$ elements of $\hL$. Let us then define the set of \emph{profinite multi-curves}
on $S_{g,n}$ as the union of the closures of the sets $C(S_{g,n})_{k-1}$ inside the profinite sets
$\cP_k(\hL)$, for all $k>0$.

Let us observe that the sets of elements of $\hP_{g,n}$ in the class of a profinite multi-curve on
$S_{g,n}$ are simple in the sense of Definition~\ref{simple}. However, in general, the class in
$\cP_k(\hL)$ of a simple subset of $k$ elements of $\hP_{g,n}$ is not a profinite multi-curve
because it may contain peripheral elements.

A \emph{simplicial profinite complex} is an abstract simplicial complex whose 
set of vertices is endowed with a profinite topology such that the sets of $k$-simplices, 
with the induced topologies, are compact and then profinite, for all $k\geq 0$. For these simplicial 
complexes, the procedure which associates to an abstract simplicial complex and an ordering of
its vertex set a simplicial set produces a simplicial profinite set.

\begin{definition}\label{geopro}(cf.\ \cite{boggi}) Let $L(\hP_{g,n})$, for $2g-2+n>0$,
be the abstract simplicial profinite complex whose simplices are the profinite multi-curves on
$S_{g,n}$. The abstract simplicial profinite complex $L(\hP_{g,n})$ is called
\emph{the complex of profinite curves on $S_{g,n}$}.
\end{definition}

For $2g-2+n>0$, there is a natural continuous action of the
procongruence Teichm\"uller group $\cGG_{g,n}$ on the complex of
profinite curves $L(\hP_{g,n})$. There are finitely many orbits of
$\cGG_{g,n}$ in $L(\hP_{g,n})_k$ each containing an element of
$C(S_{g,n})_k$, for $k\geq 0$, and, by the results of Section~4
\cite{boggi}, these orbits correspond to the possible topological
types of a surface $S_{g,n}\ssm\sg$, for $\sg$ a multi-curve on
$S_{g,n}$.

The main result of \S 4 \cite{boggi} (cf.\ Theorem~4.2) was that, for all $k\geq 0$, the profinite set
$L(\hP_{g,n})_k$ is the $\cGG_{g,n}$-completion of the discrete $\GG_{g,n}$-set $C(S_{g,n})_k$,
i.e.\ there is a natural continuous isomorphism of $\cGG_{g,n}$-sets:
$$L(\hP_{g,n})_k\cong\ilim_{\ld\in\Ld}\,C(S_{g,n})_k\left/\GG^\ld\right.,$$
where $\{\GG^\ld\}_{\ld\in\Ld}$ is a tower of finite index normal
subgroup of $\GG_{g,n}$ which forms a fundamental system of
neighborhoods of the identity for the congruence topology.

The set $\cL$ of isotopy classes of non-peripheral s.c.c.'s on $S_{g,n}$ parametrizes the set of Dehn
twists of $\GG_{g,n}$, which is the standard set of generators for this group. In other words, the
assignment $\gm\mapsto\tau_\gm$, for $\gm\in\cL$, defines an embedding
$d\co\cL\hookra\GG_{g,n}$, for $2g-2+n>0$.

The set $\{\tau_\gm\}_{\gm\in\cL}$ of all Dehn twists of
$\GG_{g,n}$ is closed under conjugation and falls in a finite set
of conjugacy classes which are in bijective correspondence with
the possible topological types of the Riemann surface
$S_{g,n}\ssm\gm$. So, let us define, for the congruence completion
$\cGG_{g,n}$, the set of \emph{profinite Dehn twists} to be the
closure of the image of the set $\{\tau_\gm\}_{\gm\in\cL}$ inside
$\cGG_{g,n}$. This is the same as the union of the conjugacy
classes in $\cGG_{g,n}$ of the images of the Dehn twists of
$\GG_{g,n}$.

There is a natural $\GG_{g,n}$-equivariant map
$d_k\co\cL\ra\cGG_{g,n}$, defined by the assignment
$\gm\mapsto\tau_\gm^k$, where $\GG_{g,n}$ acts by conjugation on
$\cGG_{g,n}$. From the universal property of the
$\cGG_{g,n}$-completion and Theorem~4.2 \cite{boggi}, it then
follows that the map $d_k$ extends to a continuous
$\cGG_{g,n}$-equivariant map
$\hat{d}_k\co\hL\ra\cGG_{g,n}$, whose image is the set
of $k$-th powers of profinite Dehn twists.

In particular, a profinite s.c.c.\ $\gm\in\hL$ determines
a profinite Dehn twist, which we denote by $\tau_\gm$, in the
procongruence Teichm\"uller group $\cGG_{g,n}$. The main result of
\S 5 of \cite{boggi} (cf.\ Theorem~5.1) is that this provides a
parametrization of the set of profinite Dehn twists of the
procongruence Teichm\"uller group:

\begin{theorem}\label{parametrize}For $2g-2+n>0$ and any $k\in\ZZ\ssm{0}$, there is a natural
injective map $\hat{d}_k\co\hL\hookra\cGG_{g,n}$ which assigns to a
profinite s.c.c.\ $\gm\in\hL$ the $k$-th power of the profinite Dehn twist  $\tau_\gm$.
\end{theorem}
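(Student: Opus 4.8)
The continuous $\cGG_{g,n}$-equivariant extension $\hat{d}_k\co\hL\ra\cGG_{g,n}$ is already in hand, so the whole content of the statement is \emph{injectivity}. The plan is to reduce to a statement about the action on $\hP_{g,n}$ and then feed it into the restricted Magnus property. First I would observe that it suffices to prove that $\tau_{\gm_1}^k=\tau_{\gm_2}^k$ implies $\gm_1=\gm_2$ for $\gm_1,\gm_2\in\hL$. Since each profinite Dehn twist topologically generates a procyclic subgroup isomorphic to $\ZZ$ (hence torsion free), and $k\in\ZZ\ssm\{0\}$, a shared $k$-th power forces $\overline{\langle\tau_{\gm_1}\rangle}=\overline{\langle\tau_{\gm_2}\rangle}$ and then $\tau_{\gm_1}=\tau_{\gm_2}$; this reduces the problem to the case $k=1$. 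I would then pass to the faithful representation on $\hP_{g,n}$ afforded by the isomorphism $\cGG_{g,n+1}\cong\tGG_{g,n+1}\subseteq\aut(\hP_{g,n})$, so that each $\tau_{\gm_i}$ becomes an explicit automorphism of $\hP_{g,n}$, and in particular $\tau_{\gm_1}^n=\tau_{\gm_2}^n$ for every $n\in\N^+$.

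The second step is to recover normal closures from the twist. Applying Theorem~\ref{subsurface} to the one-element multi-curve $\{\gm\}$, the group $\hP_{g,n}$ is the profinite fundamental group of the associated graph of groups, and the profinite Dehn twist $\tau_\gm$ acts as the identity on the vertex groups while multiplying the amalgamating (or stable-letter) datum by $\gm$. A direct computation then shows that the displacement set $\{\tau_\gm^n(x)x^{-1}\mid x\in\hP_{g,n}\}$ consists of elements of $\langle\gm^n\rangle^{\hP_{g,n}}$ and normally generates it, so that the closed normal subgroup $\langle\gm^n\rangle^{\hP_{g,n}}$ is canonically attached to the automorphism $\tau_\gm^n$. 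From $\tau_{\gm_1}^n=\tau_{\gm_2}^n$ for all $n$ I would thus obtain $\langle\gm_1^n\rangle^{\hP_{g,n}}=\langle\gm_2^n\rangle^{\hP_{g,n}}$ for every $n\in\N^+$. Since the elements of $\hP_{g,n}$ lying in the class of a profinite s.c.c.\ are simple, hence absolutely simple (Definition~\ref{simple}), Corollary~\ref{restricted} applies and yields a unit $s\in(\ZZ)^\ast$ with $\gm_2$ conjugate to $\gm_1^s$.

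It remains to upgrade $s$ to $\pm1$, which is exactly the point where the ambient geometry of $\hL$ enters. The relation $\gm_2\sim\gm_1^s$ gives $\langle\gm_2\rangle$ conjugate to $\langle\gm_1\rangle$. Tracking the sign in the displacement computation, the equality $\tau_{\gm_1}=\tau_{\gm_2}$ forces $\gm_1$ to be conjugate to $\gm_2^{\pm1}$, whence $\gm_1$ is conjugate to $\gm_1^{\pm s}$; by Proposition~\ref{normalizers} the subgroup $\langle\gm_1\rangle$ is self-normalizing, so any element conjugating $\langle\gm_1\rangle$ to itself already centralizes $\gm_1$, and therefore $\gm_1=\gm_1^{\pm s}$, i.e.\ $s=\pm1$. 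Recalling that a point of $\hL$ is the unordered pair $\{[\gm],[\gm^{-1}]\}$ of conjugacy classes, the conclusion $\gm_2\sim\gm_1^{\pm1}$ says precisely that $\gm_1$ and $\gm_2$ coincide in $\hL$, proving injectivity.

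I expect the main obstacle to be the normal-closure recovery of the second paragraph: making the informal ``$\tau_\gm$ twists by $\gm$'' picture into a rigorous profinite statement that the displacement set normally generates exactly $\langle\gm^n\rangle^{\hP_{g,n}}$, together with the careful sign bookkeeping in the third paragraph needed to pass from ``unit power'' to $\pm1$. The procyclic reduction in the first paragraph also relies on the (known) fact that profinite Dehn twists generate torsion-free procyclic groups.
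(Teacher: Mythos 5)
The decisive gap is the normal-closure recovery in your second paragraph: it fails for separating curves, and this is precisely the difficulty the paper's own proof is built to circumvent. Your computation is fine when $\gm$ is non-separating: the stable letter $t$ of the HNN splitting of $\hP_{g,n}$ along $\gm$ has displacement $\tau_\gm^n(t)t^{-1}$ equal to a conjugate of $\gm^{\pm n}$, so the displacement set does normally generate $\langle\gm^n\rangle^{\hP_{g,n}}$. But if $\gm$ is \emph{separating}, write $\Pi=\Pi_1\ast_{\langle\gm\rangle}\Pi_2$; with the usual basepoint conventions $\tau_\gm^n$ is the identity on $\Pi_1$ and conjugation by $\gm^n$ on $\Pi_2$, so every displacement $\tau_\gm^n(x)x^{-1}$ is a product of conjugates of commutators $[\gm^n,z]$ with $z\in\Pi_2$. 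The closed normal subgroup $N_n(\gm)$ generated by the displacement set therefore dies in every continuous quotient in which the image of $\gm^n$ is central, whereas $\gm^n$ itself survives in suitable such quotients: if $\gm$ separates punctures, already in the abelianization; if $\gm$ is null-homologous, in the $2$-step nilpotent quotient $\Pi/[[\Pi,\Pi],\Pi]$, where $\gm^n$ is central and its class in $[\Pi,\Pi]/[[\Pi,\Pi],\Pi]$ is non-zero (for the closed genus-$2$ surface and $\gm=[a_1,b_1]$, map onto the discrete Heisenberg group by $a_1\mapsto a$, $b_1\mapsto b$, $a_2\mapsto b$, $b_2\mapsto a$: all displacements die, $\gm^n$ does not). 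Since this is detected by finite nilpotent quotients, the strict inclusion $N_n(\gm)\subsetneq\langle\gm^n\rangle^{\hP_{g,n}}$ persists profinitely. Hence $\tau_{\gm_1}^n=\tau_{\gm_2}^n$ gives only $N_n(\gm_1)=N_n(\gm_2)$, not equality of the normal closures, and Corollary~\ref{restricted} cannot be invoked; indeed, for a separating curve the twist is already \emph{trivial} modulo $N_n(\gm)$, so no ``smallest normal subgroup trivializing the twist'' can ever recover $\langle\gm^n\rangle^{\hP_{g,n}}$. (There is the further wrinkle that $\cGG_{g,n}$ acts on $\hP_{g,n}$ only by outer automorphisms, so displacement sets depend on a choice of lift.) The paper's proof---Theorem~\ref{embedding} fed into Theorem~\ref{multi-twists}---avoids exactly this: it passes to the homology of finite coverings $\ol{S}_K$, where preimages of separating curves become homologically non-trivial (this is the role of Lemma~3.10 of \cite{sym} and of the relative Magnus property, Corollary~\ref{magnus4}), and it recovers the linearized datum $\Psi_{K,p}(\gm)$ from the twist as the core of the associated transvection on $H_1(\ol{S}_K,\Q_p)$, per \S 5 of \cite{boggi}.

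A second, independent gap is the opening reduction to $k=1$. In a profinite group, two elements generating torsion-free procyclic subgroups can share a $k$-th power without generating the same subgroup: in $\ZZ\times\Z/k$ the elements $(1,0)$ and $(1,1)$ both generate copies of $\ZZ$ and have equal $k$-th powers. So ``shared $k$-th power forces equal procyclic closures'' is not a general fact, and proving it inside $\cGG_{g,n}$ is essentially the theorem itself. For $k\in\N^+$ the reduction is in any case unnecessary: from $\tau_{\gm_1}^{kn}=\tau_{\gm_2}^{kn}$ for all $n\in\N^+$ one could feed Theorem~\ref{magnus} directly, since it only requires \emph{some} $k_n\in\N^+$ with $\gm_2^{k_n}\in\langle\gm_1^n\rangle^{\hP_{g,n}}$ (take $k_n=kn$)---if the normal closures were available, which by the previous paragraph they are not. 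But the statement allows any $k\in\ZZ\ssm\{0\}$, including zero-divisors of $\ZZ$ (say $k$ supported at a single prime), for which the $k$-th power map is not injective even on a single copy of $\ZZ$; no amount of torsion-freeness of procyclic subgroups can see this, whereas the paper's linearization works prime by prime and can argue at a prime where the corresponding component of $k$ is non-zero.
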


The purpose of this section is to show how the complex of profinite curves $L(\hP_{g,n})$ can be
"linearized" and then extract, as a consequence of this process, both a new proof and a generalization 
of Theorem~5.1 \cite{boggi} to profinite multitwists. This linearization result can also be considered a 
generalization to the profinite case of Theorem~5.1 \cite{scc}.

In order to make this statement more precise, we need to introduce more notations and definitions.
Let $K$ be an open normal subgroup of $\hP_{g,n}$ and let $p_K\co S_K\ra S_{g,n}$ be the
associated normal unramified covering of Riemann surfaces with covering transformation group
$G_K:=\hP_{g,n}/K$. Let then $\ol{S}_K$ be the closed Riemann surface obtained from $S_K$
filling in its punctures and, for a commutative unitary ring of coefficients $A$, let $H_1(\ol{S}_K,A)$
be its first homology group. There is a natural map $\psi_K\co K\ra H_1(\ol{S}_K,A)$.

\begin{definition}\label{image}
For a given $\gm\in\hL$, let us denote by the same letter an element of the profinite group
$\hP_{g,n}$ in the class of the given profinite s.c.c.. Let $\nu_K(\gm)$ be the smallest positive
integer such that $\gm^{\nu_K(\gm)}\in K$. For a profinite multi-curve $\sg\in L(\hP_{g,n})$, let us
also denote by $\sg$ a simple subset of $\hP_{g,n}$ in the class of the given multi-curve. Let then
$V_{K,\sg}$ be the primitive $A$-submodule of $H_1(\ol{S}_K,A)$ generated by the $G_K$-orbit of
the subset $\{\psi_K(\gm^{\nu_K(\gm)})\}_{\gm\in\sg}$.
\end{definition}

For $\sg=\{\gm_1,\ldots,\gm_h\}\subset\hP_{g,n}$ a simple subset, this is the same as the
$A$-submodule generated by the image $\psi_K(\sg_{K,1}^{\hP_{g,n}})$ in the
homology group $H_1(\ol{S}_K,A)$ (cf.\ Theorem~\ref{magnus2}).

Let then $\mathrm{Gr}(H_1(\ol{S}_K,A))$ be the absolute Grassmanian of primitive
$A$-submodules of the homology group $H_1(\ol{S}_K,A)$, that is to say the disjoint union of the
Grassmanians of primitive, $k$-dimensional, $A$-submodules of the homology group
$H_1(\ol{S}_K,A)$, for all $1\leq k\leq\mathrm{rank}\,H_1(\ol{S}_K,A)$.

For $A_p$ equal to the ring of $p$-adic integers $\Z_p$ or the finite field
$\F_p$, the absolute Grassmanian $\mathrm{Gr}(H_1(\ol{S}_K,A_p))$ has a natural structure of
profinite space, while, for $A_p=\Q_p$, it is a locally compact totally disconnected Hausdorff space.
In all cases, for $\sg\in L(\hP_{g,n})$, the assignment $\sg\mapsto V_{K,\sg}$ defines
a natural continuous $\cGG_{g,n}$-equivariant map:
$$\Psi_{K,p}\co L(\hP_{g,n})\lra\mathrm{Gr}(H_1(\ol{S}_K,A_p)).$$

\begin{theorem}\label{embedding}For $p>0$ a prime number, let $A_p=\F_p$, $\Z_p$ or $\Q_p$.
For $2g-2+n>0$, there is a natural continuous $\cGG_{g,n}$-equivariant injective map:
$$\widehat{\Psi}_p:=\prod_{K\unlhd\hP_{g,n}}\Psi_{K,p}\co L(\hP_{g,n})\hookra
\prod_{K\unlhd\hP_{g,n}}\mathrm{Gr}(H_1(\ol{S}_K,A_p)),$$
where $\{K\}$ is a cofinal system of open normal subgroups of
the profinite group $\hP_{g,n}$.
\end{theorem}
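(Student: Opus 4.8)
Continuity of $\widehat{\Psi}_p$ is immediate from the universal property of the product topology, since each factor $\Psi_{K,p}$ is continuous, and $\cGG_{g,n}$-equivariance holds factor by factor; so the entire content is the injectivity of $\widehat{\Psi}_p$. Since a profinite multi-curve is determined by its set of vertices, it suffices to show that two profinite multi-curves $\sg,\sg'\in L(\hP_{g,n})$ with $V_{K,\sg}=V_{K,\sg'}$ for every $K$ in the cofinal system have the same vertices; by symmetry it is enough to prove that each profinite s.c.c.\ $\gm'$ occurring in $\sg'$ already occurs in $\sg$. Recall that the elements of $\hP_{g,n}$ in the class of a profinite multi-curve are simple, hence absolutely simple, so the vertices $\gm'$ of $\sg'$ and $\gm_1,\ldots,\gm_h$ of $\sg$ are absolutely simple; this is exactly what lets us feed them into the relative Magnus property.

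\textbf{Promoting the homological data to the Magnus hypothesis.} Fix a vertex $\gm'$ of $\sg'$ and a representative $y\in\hP_{g,n}$. Since $\gm'\in\sg'$, the class $\psi_K(y^{\nu_K(y)})$ lies in $V_{K,\sg'}=V_{K,\sg}$, and by the identity recorded after Definition~\ref{image} the latter is the $A_p$-submodule spanned by $\psi_K(\sg^{\hP_{g,n}}_{K,1})$. Thus, for every $K$, a power of $y$ has homology class in the span of the classes of $\sg$. The plan is to promote these single-level conditions into the hypothesis of Corollary~\ref{magnus4}: that for a cofinal system of $K$ and every $n=p^t$ some power $\psi^{(p)}_K(y)^{\mu_{K,n}}$ lies in $\psi^{(p)}_K(\sg^{\hP_{g,n}}_{K,n})$ inside the relative pro-$p$ completion $(\hP_{g,n})^{(p)}_K$. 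This rests on the standard fact that a finitely generated pro-$p$ group is recovered from the inverse system of the mod-$p$ homologies of its finite-index subgroups, together with the observation that passing from $K$ to a smaller $K'$ both deepens the Frattini filtration and, through the growth of $\nu_{K'}$, raises the exponent $n$; the case $A_p=\Z_p$ carries the integral information, while $\F_p$ and $\Q_p$ are its reduction and its rationalization and cut out the same primitive submodule. Granting this, Corollary~\ref{magnus4} (or, without the simplicity assumption on $y$, Theorem~\ref{magnus5} in its pro-nilpotent form) yields that $y$ is conjugate to $\gm_i^s$ for some $i\in\{1,\ldots,h\}$ and some unit $s\in\ZZ^\ast$.

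\textbf{The main obstacle.} It remains to upgrade ``$\gm'$ is conjugate to $\gm_i^s$ for a unit $s$'' to the equality $\gm'=\gm_i$ in $\hL$, and this is the delicate point. Raising an element to a unit power $s\in\Z_p^\ast$ merely scales its homology class by a unit and hence leaves the primitive submodule $V_{K,\cdot}$ unchanged, so the linearized invariant $\widehat{\Psi}_p$ is by construction blind to the exponent $s$. Injectivity must therefore rest on the fact that a nontrivial profinite power $\gm_i^s$, with $s\neq\pm1$, is never again a profinite s.c.c.\ lying in $\hL$. I would argue this in two steps. First, if $\gm_i^s$ is conjugate to $\gm_i^{\pm1}$, the conjugating element normalizes $\langle\gm_i\rangle$, so by Proposition~\ref{normalizers} it lies in $\langle\gm_i\rangle$ and commutes with $\gm_i$, forcing $\gm_i^s=\gm_i^{\pm1}$ and hence $s=\pm1$. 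Second, to exclude that $\gm_i^s$ defines a \emph{different} point of $\hL$, I would invoke the inverse-limit description $\hL\cong\ilim_\lambda\cL/\GG^\lambda$ of Theorem~4.2 of \cite{boggi}: an element of $\hL$ is a compatible system of $\GG^\lambda$-orbits of genuine, hence primitive, discrete s.c.c.'s, and at each finite level the discrete linearization of Theorem~5.1 of \cite{scc} recovers this orbit from the homology datum, so a unit power $\gm_i^s$ that lay in $\hL$ would be forced, level by level, to be primitive and to agree with $\gm_i$, giving $s\equiv\pm1$ at every level and thus $s=\pm1$. Gluing this finite-level discrete recovery to the profinite Magnus statement while keeping control of $s$ across the tower is precisely where care is needed, and it is the step I expect to be the main difficulty. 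With it in hand, $\gm'=\gm_i$ in $\hL$, so every vertex of $\sg'$ is a vertex of $\sg$; by symmetry $\sg=\sg'$, and $\widehat{\Psi}_p$ is injective.
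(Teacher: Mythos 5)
Your overall strategy (reduce to vertices, feed them into the relative Magnus property of Corollary~\ref{magnus4}) points in the right direction, but the step you call ``promoting the homological data to the Magnus hypothesis'' is precisely the crux of the paper's proof, and your justification --- a ``standard fact that a finitely generated pro-$p$ group is recovered from the inverse system of the mod-$p$ homologies of its finite-index subgroups'' --- is neither standard nor sufficient. Two concrete problems. First, $V_{K,\sg}$ lives in $H_1(\ol{S}_K,A_p)$, the homology of the \emph{closed} surface, which annihilates all peripheral classes; a purely formal promotion of the kind you describe would therefore apply verbatim to a peripheral loop $y=u_k$ (its classes vanish at every level $K$, hence lie in every $V_{K,\sg}$), and Corollary~\ref{magnus4} would then make $u_k$ conjugate to a unit power of a non-peripheral $\gm_i$, which is absurd. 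This is exactly why the paper replaces $\sg_{K,s}^{\hP_{g,n}}$ by the augmented subgroups $\td{\sg}_{K,s}^{\hP_{g,n}}$, which also contain the powers $u_k^{s\cdot\nu_K(u_k)}$ of the loops around the punctures, and works with the invariant $\td{\Psi}_{K,p}$ valued in $\mathrm{Gr}(H_1(K,\F_p))$ rather than in $\mathrm{Gr}(H_1(\ol{S}_K,\F_p))$; your proposal never confronts this. Second, even with the peripheral correction, one must pass from agreement of images in the \emph{abelian} quotients $H_1(K',\F_p)$, for all levels $K'$, to a containment statement in the \emph{non-abelian} pro-$p$ quotients, which is what the hypothesis of Corollary~\ref{magnus4} demands. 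In the paper this is the content of Lemma~\ref{Frattini} (a Frattini-subgroup argument: from $N_1\neq N_2$ normal in $L^{(p)}$ and invariant under a finite group of outer automorphisms, produce an invariant open $U\supseteq N_1N_2$ with $N_1\Phi(U)\neq N_2\Phi(U)$), combined with the choice of a deeper level $L\subseteq H$ with $\nu_L(x)=s\,\nu_H(x)$ that converts depth of the covering into the exponent $s=p^t$. No substitute for this machinery appears in your proposal, so the central implication is unproved.

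By contrast, the step you single out as ``the main obstacle'' --- upgrading ``$\gm'$ is conjugate to $\gm_i^s$ with $s\in\ZZ^\ast$'' to ``$\gm'=\gm_i$ in $\hL$'' --- is the part that is essentially free: it is the consequence of Theorem~4.2 of \cite{boggi} quoted in the paper, namely that two distinct profinite multi-curves always contain an element not conjugate to any $\ZZ$-power of an element of the other. The paper in fact organizes the whole argument contrapositively so that this fact is used at the outset rather than at the end: given $\sg\neq\xi$, it picks $\gm_i\in\sg$ conjugate to no $\ZZ$-power of an element of $\xi$ (and to no $u_k$), applies the contrapositive of Corollary~\ref{magnus4} to the augmented set to get distinct images of $\td{\sg}_{H,s}^{\hP_{g,n}}$ and $\td{\xi}_{H,s}^{\hP_{g,n}}$ in $H^{(p)}$ for some $H$ and $s=p^t$, and only then descends to a finite homological level via Lemma~\ref{Frattini}. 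This organization dissolves your ``unit power'' worry entirely; what it cannot dispense with --- and what your proof is missing --- is the peripheral augmentation and the Frattini descent.
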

\begin{proof}For $A_p=\Z_p$, the submodule $V_{K,\sg}$ of $H_1(\ol{S}_K,\Z_p)$ is primitive.
Therefore, it is enough to prove the theorem for $A_p=\F_p$.

Let $u_1,\ldots,u_n\in\Pi_{g,n}$ be simple loops around the punctures on the surface $S_{g,n}$
labeled by the points $P_1,\ldots,P_n$, respectively.

For $K$ an open normal subgroup of $\hP_{g,n}$,
a simple set of elements $\sg=\{\gm_1,\ldots,\gm_h\}$ and $s\in\N^+$, let us denote by
$\td{\sg}_{K,s}^{\hP_{g,n}}$ the closed normal subgroup generated by the set of elements:
$$\gm_1^{s\cdot\nu_K(\gm_1)},\ldots,\gm_h^{s\cdot\nu_K(\gm_h)},
u_1^{s\cdot\nu_K(u_1)},\ldots,u_n^{s\cdot\nu_K(u_n)}.$$

For $\sg=\{\gm_1,\ldots,\gm_h\}\in L(\hP_{g,n})$, let $\td{V}_{K,\sg}$ be the subspace of the
$\F_p$-vector space $K_p^{ab}:=H_1(K,\F_p)$ which is the image of the normal subgroup
$\td{\sg}_{K,s}^{\hP_{g,n}}$ of $K$ by the natural epimorphism $K\ra K_p^{ab}$. The assignment
$\gm\mapsto \td{V}_{K,\sg}$ then defines a natural continuous $\cGG_{g,n}$-equivariant map:
$$\td{\Psi}_{K,p}\co L(\hP_{g,n})\lra\mathrm{Gr}(K_p^{ab}).$$

For $\xi=\{\delta_1,\ldots,\delta_h\}\in L(\hP_{g,n})$, let us also denote by $\td{\xi}_{K,s}^{\hP_{g,n}}$
the closed normal subgroup generated by the set of elements:
$$\delta_1^{s\cdot\nu_K(\gm_1)},\ldots,\delta_h^{s\cdot\nu_K(\gm_h)},
u_1^{s\cdot\nu_K(u_1)},\ldots,u_n^{s\cdot\nu_K(u_n)}.$$

Let us show that, if $\sg\neq \xi\in L(\hP_{g,n})$, there is an open normal subgroup $K$
of $\hP_{g,n}$ such that there holds $\td{\Psi}_{K,p}(\sg)\neq\td{\Psi}_{K,p}(\xi)$.
This obviously implies that there holds as well $\Psi_{K,p}(\sg)\neq\Psi_{K,p}(\xi)$, proving
Theorem~\ref{embedding}.

A consequence of Theorem~4.2 \cite{boggi} is also that not every element of $\sg$ is conjugated
to a $\ZZ$-power of an element in $\xi$. Since no element of $\sg$ is conjugated to $u_k$, for
$k=1,\ldots,n$, from Corollary~\ref{magnus4}, it then follows that
there is an open normal subgroup $H$ of $\hP_{g,n}$ and an $s=p^t$, for $t\in\N$, such that
the images of the subgroups $\td{\sg}_{H,s}^{\hP_{g,n}}$ and $\td{\xi}_{H,s}^{\hP_{g,n}}$
in the maximal pro-$p$ quotient $H^{(p)}$ of the profinite group $H$ are distinct. Moreover,
by Remark~\ref{cofinal}, we can assume that all elements $\gm_i^{\nu_K(\gm_i)}$
and $\delta_j^{\nu_K(\delta_j)}$, for $i,j=1,\ldots, h$, are generators in $H$.

Let then $L$ be an open normal subgroup of $\hP_{g,n}$ contained in $H$ and of index
a power of $p$ in $H$ such that there holds $\nu_L(x)=s\nu_H(x)$, for $x=\gm_i,\delta_j$ or $u_k$, for
$i,j=1,\ldots, h$ and $k=1,\ldots,n$. Then, the images of the subgroups
$\td{\sg}_{L,1}^{\hP_{g,n}}=\td{\sg}_{H,s}^{\hP_{g,n}}$ and
$\td{\xi}_{L,1}^{\hP_{g,n}}=\td{\xi}_{H,s}^{\hP_{g,n}}$ in the maximal pro-$p$ quotient $L^{(p)}$
of $L$ are also distinct. We need one more lemma:

\begin{lemma}\label{Frattini} Let $L^{(p)}$ be a pro-$p$ group and $N_1\neq N_2$ normal subgroups of
$L^{(p)}$ invariant for the action of a finite subgroup $G$ of $\out(L^{(p)})$. Then, there exists an open
normal subgroup $U$ of $L^{(p)}$, containing $N_1,N_2$ and invariant for the action of $G$, such that
$N_1\Phi(U)\neq N_2\Phi(U)$, where, for a given group $H$, we denote by $\Phi(H)$ its Frattini subgroup.
\end{lemma}

\begin{proof}Note that $N_1\subsetneq N_1N_2$. Hence, it suffices to prove the existence of
an open normal subgroup $U$ of $L^{(p)}$ containing $N_1N_2$ and invariant for the action of $G$
such that there holds $N_1\Phi(U)\neq N_1N_2\Phi(U)$.

Let $\{V_\upsilon\}_{\upsilon\in\Upsilon}$ be the set of all open normal subgroups of $L^{(p)}$ containing
$N_1N_2$ and invariant for the action of $G$, it then holds
$\bigcap_{\upsilon\in\Upsilon} V_\upsilon=N_1N_2$ and
$\bigcap_{\upsilon\in\Upsilon} \Phi(V_\upsilon)=\Phi(N_1N_2)$.

If $N_1\Phi(V_\upsilon)=N_1N_2\Phi(V_\upsilon)$, for all $\upsilon\in\Upsilon$,
then there holds as well:
$$N_1=N_1\Phi(N_1)=\bigcap_{\upsilon\in\Upsilon} N_1\Phi(V_\upsilon)=
\bigcap_{\upsilon\in\Upsilon}N_1N_2\Phi(V_\upsilon)=N_1N_2\Phi(N_1N_2)=N_1N_2.$$
Therefore, since $N_1\neq N_1N_2$, there holds $N_1\Phi(V_\upsilon)\neq N_1N_2\Phi(V_\upsilon)$,
for some $\upsilon\in\Upsilon$.
\end{proof}

In order to complete the proof, we apply Lemma~\ref{Frattini} with $G=\hP_{g,n}/L$,
$N_1=\psi_K^{(p)}(\td{\sg}_{L,1}^{\hP_{g,n}})$ and $N_2=\psi_K^{(p)}(\td{\xi}_{L,1}^{\hP_{g,n}})$.
Then, let $U$ be an open normal subgroup of $L^{(p)}$ as in the statement of Lemma~\ref{Frattini},
let $U'$ be its inverse image in $\hP_{g,n}$, which is also a normal subgroup, and let $K:=L\cap U'$.
Since $K$ contains both $\td{\sg}_{L,1}^{\hP_{g,n}}$ and $\td{\xi}_{L,1}^{\hP_{g,n}}$,
there holds $\td{\Psi}_{K,p}(\sg)\neq\td{\Psi}_{K,p}(\xi)$.
\end{proof}

\section[Centralizers of profinite multitwists]{Centralizers of profinite multitwists in the procongruence Teichm\"uller group}\label{centralizers}
The results of the previous section have interesting implications for the combinatorial structure 
of the procongruence Teichm\"uller group. We will improve the results both of \cite{boggi} and of \cite{H-M2}.

As a first application of Theorem~\ref{embedding}, it is now possible to give a partial parametrization also 
to profinite multitwists, i.e.\ products of powers of commuting profinite Dehn twists. 
Let us observe that, for $\sg=\{\gm_0,\ldots,\gm_k\}$ a profinite multi-curve on $S_{g,n}$, 
the set $\{\tau_{\gm_0},\ldots,\tau_{\gm_k}\}$ is a set of commuting profinite Dehn twists.

\begin{theorem}\label{multitwists}For $2g-2+n>0$, let $\sg=\{\gm_1,\ldots,\gm_s\}$ and
$\sg'=\{\delta_1,\ldots,\delta_t\}$ be two profinite multi-curves on $S_{g,n}$.
Suppose that, there is an identity in $\cGG_{g,n}$:
$$\tau_{\gm_1}^{h_1}\tau_{\gm_2}^{h_2}\cdot\ldots\cdot\tau_{\gm_s}^{h_s}=
\tau_{\delta_1}^{k_1}\tau_{\delta_2}^{k_2}\cdot\ldots\cdot\tau_{\delta_t}^{k_t},$$
for $h_i\in m_\sg\cdot\N^+$ and $k_j\in m_{\sg'}\cdot\N^+$, with $m_\sg,m_{\sg'}\in\ZZ^\ast$.
Then, there hold:
\begin{enumerate}
\item $t=s$;
\item there is a permutation $\phi\in\Sigma_s$ such that $\delta_i=\gm_{\phi(i)}$ and $k_i=h_{\phi(i)}$,
for $i=1,\ldots,s$.
\end{enumerate}
\end{theorem}

\begin{proof}If $\sg\neq\sg'$, by Theorem~\ref{embedding}, there is an open characteristic
subgroup $K$ of $\hP_{g,n}$ such that there holds
$\Psi_{K,p}(\sg)\neq\Psi_{K,p}(\sg')$ in the $\Q_p$-vector space $H_1(\ol{S}_K,\Q_p)$.

Let $\cGG^K$ be the geometric level associated to $K$, i.e.\ the kernel
of the natural representation $\cGG_{g,n}\ra\out(\hP_{g,n}/K)$. Then (cf.\ \S 2 \cite{boggi}), there is a
natural representation $\rho_{K,(p)}\co\cGG^K\ra Z_{\Sp(H_1(\ol{S}_K,\Q_p))}(G_K)$.

Let $r\in\N^+$ be such that, for every profinite Dehn twist $\tau_\gm\in\cGG_{g,n}$, there holds
$\tau_\gm^r\in\cGG^K$. From the results of \S 5 in \cite{boggi}, it follows that it is possible to
recover the subspaces $\Psi_{K,p}(\sg)$ and
$\Psi_{K,p}(\sg')$ as the \emph{cores} (cf.\ remarks preceding Lemma~5.11 \cite{boggi}) of the
symmetric bilinear forms on $H_1(\ol{S}_K,\Q_p)$ associated to the multi-transvections
$\rho_{K,(p)}(\tau_{\gm_1}^{r\cdot h_1}\ldots\tau_{\gm_s}^{r\cdot h_s})$ and
$\rho_{K,(p)}(\tau_{\delta_1}^{r\cdot k_1}\ldots\tau_{\delta_t}^{r\cdot k_t})$, respectively. Therefore,
the hypotheses of the theorem imply $\sg=\sg'$, but then items $(i)$ and $(ii)$ follow immediately.
\end{proof}

An immediate consequence of Theorem~\ref{multitwists} is a description of centralizers of profinite
multitwists of the procongruence Teichm\"uller group, generalizing Theorems~D and E in \cite{H-M2}
in which this result was proved for maximal multi-curves.

\begin{corollary}\label{centralizers multitwists}\begin{enumerate}
\item For $2g-2+n>0$, let $\sg=\{\gm_1,\ldots,\gm_s\}$ be a profinite multi-curve on $S_{g,n}$ and
$(h_1,\ldots,h_k)\in(m_\sg\cdot\N^+)^k$ a multi-index, with $m_\sg\in\ZZ^\ast$. Then, there holds:
$$Z_{\cGG_{g,n}}(\tau_{\gm_1}^{h_1}\cdot\ldots\cdot\tau_{\gm_k}^{h_k})
= N_{\cGG_{g,n}}(\langle \tau_{\gm_1}^{h_1}\cdot\ldots\cdot\tau_{\gm_k}^{h_k}\rangle)
=N_{\cGG_{g,n}}(\langle \tau_{\gm_1},\ldots,\tau_{\gm_k}\rangle).$$

\item Let us assume that $\sg=\{\gm_1,\ldots,\gm_s\}$ is a multi-curve on $S_{g,n}$
such that there holds $S_{g,n}\ssm\{\gm_1,\ldots,\gm_k\}\cong S_{g_1,n_1}\amalg\ldots\amalg S_{g_h,n_h}$.
Then, the centralizer in the procongruence Teichm\"uller modular group $\cGG_{g,n}$ of the
multitwist $\tau_{\gm_1}^{h_1}\cdot\ldots\cdot\tau_{\gm_k}^{h_k}$ is the closure, inside
$\cGG_{g,n}$, of the stabilizer $\GG_\sg<\GG_{g,n}$. Therefore, it is described by the exact sequences:
$$\begin{array}{c}
1\ra\cGG_{\vec{\sg}}\ra Z_{\cGG_{g,n}}(\tau_{\gm_1}^{h_1}\cdot\ldots\cdot\tau_{\gm_k}^{h_k})
\ra\mathrm{Sym}^{\pm}(\sg),\\
\\
1\ra\bigoplus\limits_{i=1}^k\ZZ\cdot\tau_{\gm_i}\ra\cGG_{\vec{\sg}}
\ra\cGG_{g_1,n_1}\times\dots\times\cGG_{g_h,n_h}\ra 1,
\end{array}$$
where $\mathrm{Sym}^{\pm}(\sg)$ is the group of signed permutations on the set $\sg$.
 \end{enumerate}
\end{corollary}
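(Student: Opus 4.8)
The plan is to reduce both statements to the rigidity theorem for profinite multi-twists (Theorem~\ref{multi-twists}), using the equivariance of the $\cGG_{g,n}$-action on profinite Dehn twists: for every $f\in\cGG_{g,n}$ and every profinite s.c.c.\ $\gm\in\hL$ one has $f\tau_\gm f^{-1}=\tau_{f(\gm)}$, while by Theorem~\ref{parametrize} the assignment $\gm\mapsto\tau_\gm$ is injective. Set $T:=\tau_{\gm_1}^{h_1}\cdots\tau_{\gm_k}^{h_k}$ and $A:=\langle\tau_{\gm_1},\dots,\tau_{\gm_k}\rangle$; the latter is a free abelian profinite group on the $\tau_{\gm_i}$ whose only profinite Dehn twists are the generators themselves.

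For part (i), I would first establish $Z_{\cGG_{g,n}}(T)=N_{\cGG_{g,n}}(\langle T\rangle)$. The inclusion $\subseteq$ is trivial. For $\supseteq$, an element $f$ with $fTf^{-1}=T^s$, $s\in\ZZ^\ast$, satisfies $\tau_{f(\gm_1)}^{h_1}\cdots\tau_{f(\gm_k)}^{h_k}=\tau_{\gm_1}^{sh_1}\cdots\tau_{\gm_k}^{sh_k}$, and Theorem~\ref{multi-twists} then forces $f$ to permute $\{\gm_1,\dots,\gm_k\}$ by some $\phi$ with $h_i=s\,h_{\phi(i)}$. Writing $h_i=m_\sg a_i$ with $a_i\in\N^+$ gives $a_i=s\,a_{\phi(i)}$, so $s$ is a ratio of positive integers; since a rational number belongs to $\ZZ$ only when it is an ordinary integer, both $s$ and $s^{-1}$ are integers, whence $s=1$ and $fTf^{-1}=T$. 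In particular such an $f$ stabilizes the multi-curve $\sg$, which yields $Z_{\cGG_{g,n}}(T)=N_{\cGG_{g,n}}(\langle T\rangle)\subseteq N_{\cGG_{g,n}}(A)$. The reverse inclusion $N_{\cGG_{g,n}}(A)\subseteq Z_{\cGG_{g,n}}(T)$ is the delicate point: an element of $N_{\cGG_{g,n}}(A)$ permutes the $\gm_i$ a priori arbitrarily, and one must show that every permutation realized inside $\cGG_{g,n}$ preserves the exponent function $i\mapsto h_i$, so that it fixes $T$. This is precisely the content that the quotient map appearing in part (ii) lands in the exponent-preserving signed permutations, and I would treat it simultaneously with part (ii).

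For part (ii), I would identify $Z_{\cGG_{g,n}}(T)$ with the closure $\overline{\GG_\sg}$ in $\cGG_{g,n}$ of the discrete stabilizer of $\sg$. One direction is clear, since $\GG_\sg$ centralizes $T$ and $Z_{\cGG_{g,n}}(T)$ is closed; the reverse, that no profinite element outside this closure centralizes $T$, follows by combining part (i) with the congruence-completion description of the $\cGG_{g,n}$-orbits on $L(\hP_{g,n})$ (Theorem~4.2 of \cite{boggi}). The two exact sequences are then the profinite analogues of the classical ones for $\GG_\sg$: the outer sequence records the signed permutation action on the curves of $\sg$, with image the exponent-preserving subgroup of $\mathrm{Sym}^\pm(\sg)$; the inner sequence is the procongruence subsurface exact sequence, whose kernel $\bigoplus_i\ZZ\cdot\tau_{\gm_i}$ is the central twist group and whose quotient is the product $\cGG_{g_1,n_1}\times\cdots\times\cGG_{g_h,n_h}$ attached to the components of $S_{g,n}\ssm\sg$. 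Exactness, together with the identification of the quotient with a product of procongruence groups, is supplied by the graph-of-groups decomposition of Theorem~\ref{subsurface} and by the self-normalizing property of the vertex (subsurface) groups established there.

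The step I expect to be hardest is showing that passage to the profinite setting preserves exactness of the inner sequence and that the procongruence centralizer acquires no elements beyond the closure of the discrete stabilizer — equivalently, ruling out exotic profinite symmetries permuting the $\gm_i$ without preserving their exponents. Theorem~\ref{multi-twists} controls the permutation of the curves and the unit-bookkeeping above controls the exponents, so the residual difficulty is purely the profinite structure theory of the stabilizer, for which Theorem~\ref{subsurface} and the results of \cite{boggi} are the essential inputs.
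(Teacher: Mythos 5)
Your handling of the first equality in (i) is exactly the paper's route: the paper's entire proof consists of the conjugation identity $f\tau_\gm f^{-1}=\tau_{f(\gm)}$, an appeal to Theorem~\ref{multi-twists}, and citations of Corollary~6.4 and Theorem~6.6 of \cite{boggi}; your positivity argument forcing $s=1$ is a correct elaboration of what the paper leaves implicit, and it does give $Z_{\cGG_{g,n}}(T)=N_{\cGG_{g,n}}(\langle T\rangle)\subseteq N_{\cGG_{g,n}}(\langle\tau_{\gm_1},\ldots,\tau_{\gm_k}\rangle)$. The problems lie in the two steps you defer. The intermediate claim you propose for the reverse inclusion --- that every permutation of $\sg$ realized inside $\cGG_{g,n}$ preserves the exponent function $i\mapsto h_i$ --- is not provable: if two curves $\gm_1,\gm_2\in\sg$ have the same topological type (say, two disjoint non-separating curves with connected complement), the \emph{discrete} stabilizer $\GG_\sg$ already contains an element swapping them, and such an element normalizes $\langle\tau_{\gm_1},\ldots,\tau_{\gm_k}\rangle$ while centralizing $T=\tau_{\gm_1}^{h_1}\cdots\tau_{\gm_k}^{h_k}$ only if $h_1=h_2$. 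So this step fails whenever distinct exponents sit on interchangeable curves; and postponing it ``to part (ii)'' is circular, since the first exact sequence of (ii) describes the centralizer, not the normalizer of the twist group. The paper does not argue this way at all: everything it asserts about $N_{\cGG_{g,n}}(\langle\tau_{\gm_1},\ldots,\tau_{\gm_k}\rangle)$ is taken from the structure results of \cite{boggi}, not from an exponent-preservation claim.

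The second and more serious gap is your sourcing of part (ii). The facts that carry it --- that the centralizer is the closure of $\GG_\sg$ in $\cGG_{g,n}$, and the two exact sequences, in particular $1\ra\bigoplus_{i}\ZZ\cdot\tau_{\gm_i}\ra\cGG_{\vec\sg}\ra\cGG_{g_1,n_1}\times\cdots\times\cGG_{g_h,n_h}\ra 1$ --- cannot be extracted from Theorem~\ref{subsurface}. That theorem decomposes the profinite \emph{surface group} $\hP_{g,n}$ as a graph of profinite groups and shows its vertex subgroups are self-normalizing in $\hP_{g,n}$; it says nothing about the procongruence Teichm\"uller group, about $\cGG_{\vec\sg}$ being an extension of $\cGG_{g_1,n_1}\times\cdots\times\cGG_{g_h,n_h}$ by the twist lattice, or about exactness at the right-hand term. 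Those statements are precisely Corollary~6.4 and Theorem~6.6 of \cite{boggi}, which is what the paper cites, and their proofs rest on the stabilizer analysis of \cite{boggi}, not on the graph-of-groups decomposition of $\hP_{g,n}$. Your proposal invokes Theorem~4.2 of \cite{boggi} (orbits) and Theorem~\ref{subsurface}, neither of which yields these facts, so part (ii) is left without a proof: the missing specific citation is not bookkeeping but the core input.
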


\begin{proof}As we already observed, for $f\in\cGG_{g,n}$, there holds the identity:
$$f\cdot(\tau_{\gm_1}^{h_1}\cdot\ldots\cdot\tau_{\gm_k}^{h_k})\cdot f^{-1}=
\tau_{f(\gm_1)}^{h_1}\cdot\ldots\cdot\tau_{f(\gm_k)}^{h_k}.$$
The conclusion then follows from Theorem~\ref{multitwists}, Corollary~6.4 and
Theorem~6.6 in \cite{boggi}.
\end{proof}

\section{Galois actions on hyperbolic curves.}\label{faith}
Let $C$ be a hyperbolic curve defined over a number field $\K$. Let us fix an embedding
$\K\subset\ol{\Q}$ and a $\ol{\Q}$-valued point $\tilde{\xi}\in C$. The structural morphism
$C\ra\Spec(\K)$ induces a short exact sequence of algebraic fundamental groups:
$$1\ra\pi_1(C\times_\K\ol{\Q},\tilde{\xi})\ra\pi_1(C,\tilde{\xi})\ra G_\K\ra 1,$$
where $G_\K$ is the absolute Galois group and the group $\pi_1(C\times_\K\ol{\Q})$ is isomorphic
to the profinite completion of a hyperbolic surface group.
Associated to the above short exact sequence, is the outer Galois representation:
$$\rho_C\co G_\K\ra\out(\pi_1(C\times_\K\ol{\Q},\tilde{\xi})).$$

By Theorem~2.2 \cite{Matsu}, Corollary~6.3 \cite{H-M} and Theorem~7.7 \cite{boggi}, the representation
$\rho_C$ is faithful. In this section, as an application of the restricted Magnus property for
profinite surface groups, we are going to prove some refinements of these results.

\begin{theorem}\label{faithfulness}Let $C$ be a smooth $n$-punctured, genus $g$ curve, defined
over a number field $\K$. For $2g-2+n>0$ and $3g-3+n>0$, the faithful outer Galois
representation $\rho_C$ induces a faithful representation:
$$\omega_{g,n}\co G_\K\hookra\aut(L(\hP_{g,n})).$$
\end{theorem}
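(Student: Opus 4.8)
The plan is to realize $\omega_{g,n}$ as the composition of $\rho_C$ with the natural action of the procongruence Teichm\"uller group on the complex of profinite curves, and then to pin down the kernel of that action. First I would recall that the outer Galois representation $\rho_C$ takes values in the procongruence mapping class group $\cGG_{g,n}$ (this is part of the identification underlying Definition~\ref{geometric} and is one place where the hyperbolicity hypothesis $2g-2+n>0$ is used), and that $\cGG_{g,n}$ acts continuously and simplicially on $L(\hP_{g,n})$ by Theorem~4.2 \cite{boggi}. Composing these two gives a continuous homomorphism $\omega_{g,n}\co G_\K\ra\aut(L(\hP_{g,n}))$, so that well-definedness and continuity are automatic and the entire content of the statement is the \emph{injectivity} of $\omega_{g,n}$.

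Since $\rho_C$ is faithful by the cited results, one has $\ker\omega_{g,n}=\rho_C^{-1}(N)$, where $N$ is the kernel of the action $\cGG_{g,n}\ra\aut(L(\hP_{g,n}))$; hence it suffices to prove $\rho_C(G_\K)\cap N=\{1\}$. To identify $N$, take $f\in N$. Then $f$ fixes every vertex $\gm\in\hL$ of the complex of profinite curves (note that the hypothesis $3g-3+n>0$ guarantees $\hL\neq\emptyset$, so this has content), and from the conjugation–equivariance $f\tau_\gm f^{-1}=\tau_{f(\gm)}$ together with the injectivity of the parametrization $\hat{d}_1\co\hL\hookra\cGG_{g,n}$ of profinite Dehn twists (Theorem~\ref{parametrize}) it follows that $f$ centralizes every profinite Dehn twist $\tau_\gm$. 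As the ordinary Dehn twists generate $\GG_{g,n}$ and $\cGG_{g,n}$ is the closure of the image of $\GG_{g,n}$, the profinite Dehn twists topologically generate $\cGG_{g,n}$; therefore $f$ is central, and in fact $N=Z(\cGG_{g,n})$.

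Outside a finite list of low-complexity cases the hypotheses $2g-2+n>0$ and $3g-3+n>0$ force $Z(\cGG_{g,n})=\{1\}$, and the theorem follows at once. The step I expect to be the main obstacle is precisely the control of the center $N=Z(\cGG_{g,n})$ in the exceptional cases, essentially $(g,n)=(1,1),(1,2),(2,0)$, where $Z(\cGG_{g,n})$ is the order-two subgroup $\langle\iota\rangle$ generated by the class of the hyperelliptic involution; since $\iota$ fixes every \emph{unoriented} profinite s.c.c., it does lie in $N$, so here the $\cGG_{g,n}$-action on $L(\hP_{g,n})$ is genuinely not faithful and one must instead use that $\iota$ avoids the Galois image. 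To rule this out I would argue as follows: a preimage $\sg\in G_\K$ of $\iota$ under $\rho_C$ satisfies $\rho_C(\sg^2)=\iota^2=1$, whence $\sg^2=1$ by faithfulness of $\rho_C$; by Artin--Schreier a nontrivial involution of $G_\K$ is (conjugate to) a complex conjugation, which acts on $H_1(\ol{S}_{g,n},\Z_\ell)\cong T_\ell J$ as a real involution with nonzero $(+1)$-eigenspace, whereas $\iota$ acts there as $-\,\mathrm{id}$, a contradiction.

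Thus $\rho_C(G_\K)\cap Z(\cGG_{g,n})=\{1\}$ in all cases, so $\ker\omega_{g,n}=\{1\}$ and $\omega_{g,n}$ is faithful. I would also note that the linearization provided by Theorem~\ref{embedding} gives a more structural way to separate $\iota$ from a genuine Galois element: the distinction is already visible at the level of the induced maps on the Grassmannians $\mathrm{Gr}(H_1(\ol{S}_K,A_p))$, so the ad hoc homological computation above can be replaced by an argument internal to the maps $\Psi_{K,p}$, keeping the proof of the exceptional cases uniform with the generic ones.
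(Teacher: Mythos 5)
Your proof breaks at its very first step: the claim that $\rho_C$ takes values in the procongruence Teichm\"uller group $\cGG_{g,n}$ is false, and nothing in Definition~\ref{geometric} provides it. The Galois image in $\out(\hP_{g,n})$ only \emph{normalizes} $\cGG_{g,n}$; it is not contained in it. Concretely, every element of $\cGG_{g,n}$ sends each inertia element $u_i$ (a loop around a puncture) to a conjugate of some $u_j$ and preserves the cup-product (Weil) pairing on the first homology of the compactified surface, whereas a Galois element $\sg$ sends $u_i$ to a conjugate of $u_i^{\chi(\sg)}$ and scales the pairing by the cyclotomic character $\chi(\sg)$; since $u_i$ is not conjugate in $\hP_{g,n}$ to $u_i^{\ld}$ for $\ld\neq 1$, an inclusion $\rho_C(\sg)\in\cGG_{g,n}$ would force $\chi(\sg)=1$. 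Hence your identification $\ker\omega_{g,n}=\rho_C^{-1}(N)$ with $N=\ker\bigl(\cGG_{g,n}\ra\aut(L(\hP_{g,n}))\bigr)$ is unavailable: one must instead analyze elements $f$ of the larger arithmetic group generated by $\cGG_{g,n}$ and $\rho_C(G_\K)$ that act trivially on $L(\hP_{g,n})$, and for such $f$ the correct equivariance is $f\tau_\gm f^{-1}=\tau_{f(\gm)}^{\chi(f)}=\tau_\gm^{\chi(f)}$. So instead of centralizing all profinite Dehn twists, $f$ merely normalizes each procyclic group $\langle\tau_\gm\rangle$, twisting by a fixed unit of $\ZZ$; excluding these ``$\chi$-scaling'' elements (and not the computation of $Z(\cGG_{g,n})$) is the genuine content of the theorem. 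Note that the same issue already undermines the well-definedness of $\omega_{g,n}$ in your write-up: that Galois preserves $\hL$ cannot be deduced from a containment that fails. The paper's own proof is a two-sentence reduction: the induced Galois action on $\hL_{g,n}$, hence on $L(\hP_{g,n})$, is natural from the definition of profinite s.c.c.'s, and faithfulness is quoted from Theorem~7.2 and Corollary~7.6 of \cite{boggi}, which are precisely the results handling the cyclotomically twisted elements your argument never sees.

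What is salvageable in your attempt is the analysis internal to $\cGG_{g,n}$: using the equivariance of $\hat{d}_1$ and Theorem~\ref{parametrize}, your argument that the kernel of the $\cGG_{g,n}$-action on $L(\hP_{g,n})$ equals $Z(\cGG_{g,n})$ is correct, your list of exceptional cases $(1,1)$, $(1,2)$, $(2,0)$ for the pure mapping class groups is the right one, and the Artin--Schreier/eigenvalue argument ruling out a Galois involution mapping onto the hyperelliptic class is sound for $g\geq 1$ (modulo the fact, which also needs proof in the profinite setting via the centralizer results of Corollary~\ref{centralizers multitwists} or Theorem~6.6 of \cite{boggi}, that $Z(\cGG_{g,n})$ is no larger than the closure of the discrete center). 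But all of this concerns only elements of $\cGG_{g,n}$, and since $\rho_C(G_\K)\cap\cGG_{g,n}$ is a very small subgroup of the Galois image, it does not yield the theorem.
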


\begin{proof}By the definition of profinite simple closed curves, it is clear that the representation
$\rho_C$ induces a natural representation $G_\K\ra\aut(\hL_{g,n})$, which induces
the natural representation $\omega_{g,n}$. The faithfulness of the representation $\omega_{g,n}$
then follows from Theorem~7.2 and Corollary~7.6 of \cite{boggi}.
\end{proof}

Let $\{C^\ld\}_{\ld\in\Ld}$ be the tower of Galois \'etale connected covering of $C$ associated to
characteristic subgroups of $\hP:=\pi_1(C\times_\K\ol{\Q},\tilde{\xi})$ and let us denote by $G^\ld$ the
covering transformation group of the covering $C^\ld\ra C$. For $\ld\in\Ld$, let us also denote
by $\ol{C}^\ld$ the smooth projective curve obtained from $C$ filling in its punctures.

For a $G$-vector space $V$, let us denote by $\mathrm{Gr}_G(V)$ the absolute Grassmanian of
$G$-invariant subspaces of $V$. The outer Galois representation $\rho_C$ then induces, for every 
$\ld\in\Ld$, a natural representation
$G_\K\ra\aut\,(\mathrm{Gr}_{G^\ld}(H_{\acute{e}t}^1(\ol{C}^\ld,\Q_\ell)))$.

Let $H_{\acute{e}t}^1(\ol{C}^\infty,\Q_\ell):=\dlim_{\ld\in\Ld}H_{\acute{e}t}^1(\ol{C}^\ld,\Q_\ell)$.
This space is endowed with a natural continuous action of the profinite group $\hP$.
Let then $\mathrm{Gr}_{\hP}(H_{\acute{e}t}^1(\ol{C}^\infty,\Q_\ell))$ be the absolute Grassmanian of
$\hP$-invariant subspaces of $H_{\acute{e}t}^1(\ol{C}^\infty,\Q_\ell)$.

From Theorem~\ref{faithfulness} and Theorem~\ref{embedding}, it follows:

\begin{corollary}\label{galois}Let $C$ be a hyperbolic curve defined over a number field $\K$ with
non-trivial moduli space. The associated outer Galois representation
$\rho_C$ then induces a natural faithful representation:
$$G_\K\hookra\aut\,(\mathrm{Gr}_{\hP}(H_{\acute{e}t}^1(\ol{C}^\infty,\Q_\ell))).$$
\end{corollary}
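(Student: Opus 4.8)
The plan is to feed the faithful Galois action on the complex of profinite curves (Theorem~\ref{faithfulness}) through the linearization of Theorem~\ref{embedding}, and then to reinterpret the resulting product of Grassmannians in terms of the single Grassmannian $\mathrm{Gr}_{\hP}(H_{\acute{e}t}^1(\ol C^\infty,\Q_\ell))$ appearing in the statement. First I would apply Theorem~\ref{embedding} with $A_p=\Q_\ell$, obtaining a continuous injective map $\widehat\Psi_\ell\co L(\hP_{g,n})\hookra\prod_{K\unlhd\hP_{g,n}}\mathrm{Gr}(H_1(\ol S_K,\Q_\ell))$. Because each subspace $V_{K,\sg}$ of Definition~\ref{image} is by construction spanned by a $G_K$-orbit, this map already lands in the product $\prod_K\mathrm{Gr}_{G_K}(H_1(\ol S_K,\Q_\ell))$ of Grassmannians of $G_K$-invariant subspaces.

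Next I would set up the comparison dictionary between the topological and the arithmetic pictures. An open normal subgroup $K\unlhd\hP_{g,n}$ corresponds to a finite Galois \'etale cover $\ol C^\ld\to\ol C$ with group $G^\ld=G_K$, and Artin comparison provides a natural isomorphism $H_1(\ol S_K,\Q_\ell)\cong H_{\acute{e}t}^1(\ol C^\ld,\Q_\ell)^\vee$ which is $G_\K$-equivariant: the Galois action on the left is the one induced by $\rho_C$ on topological data, while on the right it is the arithmetic action on \'etale cohomology. Here the required $G_\K$-equivariance follows from the $\cGG_{g,n}$-equivariance of $\widehat\Psi_\ell$ together with the compatibility of $\rho_C$ with the $\cGG_{g,n}$-action already exploited in the proof of Theorem~\ref{faithfulness}, namely that $\rho_C(G_\K)\subseteq\cGG_{g,n}$. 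Dualizing turns a $G_K$-invariant subspace of $H_1(\ol S_K,\Q_\ell)$ into a $G^\ld$-invariant subspace of $H_{\acute{e}t}^1(\ol C^\ld,\Q_\ell)$, and the direct-limit inclusion $H_{\acute{e}t}^1(\ol C^\ld,\Q_\ell)\hookra H_{\acute{e}t}^1(\ol C^\infty,\Q_\ell)$, injective since pullback on $H^1$ of curves is injective, carries it to a $\hP$-invariant subspace of $H_{\acute{e}t}^1(\ol C^\infty,\Q_\ell)$ on which $\hP$ still acts through $G^\ld$. Thus every factor of the product yields a continuous $G_\K$-equivariant map $f_K\co L(\hP_{g,n})\to\mathrm{Gr}_{\hP}(H_{\acute{e}t}^1(\ol C^\infty,\Q_\ell))$ into one and the same Galois-equivariant target.

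The conclusion is then formal. The product $\prod_K f_K$ is injective, being obtained from the injective $\widehat\Psi_\ell$ by applying the injective duality and direct-limit inclusions factorwise. Since every $f_K$ is equivariant for the single Galois action on $\mathrm{Gr}_{\hP}(H_{\acute{e}t}^1(\ol C^\infty,\Q_\ell))$, an element $g\in G_\K$ acting trivially on this Grassmannian satisfies $f_K(g\cdot x)=g\cdot f_K(x)=f_K(x)$ for every level $K$ and every $x$, so $g$ fixes $\widehat\Psi_\ell(x)$ for all $x$ and hence acts trivially on $L(\hP_{g,n})$; by Theorem~\ref{faithfulness} this forces $g=1$. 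Therefore the induced representation $G_\K\to\aut(\mathrm{Gr}_{\hP}(H_{\acute{e}t}^1(\ol C^\infty,\Q_\ell)))$ is faithful, the hypothesis that $C$ has non-trivial moduli space guaranteeing $3g-3+n>0$ and hence the applicability of Theorem~\ref{faithfulness}. The only genuinely delicate step is the second paragraph: establishing the $G_\K$-equivariant Artin comparison and checking that the factorwise passage to the single direct-limit Grassmannian is injective, so that no collapsing occurs that could destroy faithfulness.
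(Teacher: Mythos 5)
Your overall route is exactly the paper's: Corollary~\ref{galois} is presented there as an immediate consequence of Theorem~\ref{faithfulness} and Theorem~\ref{embedding}, and your proof fills in precisely that deduction --- linearize $L(\hP_{g,n})$ via $\widehat\Psi_\ell$, transport each factor into $\mathrm{Gr}_{\hP}(H_{\acute{e}t}^1(\ol{C}^\infty,\Q_\ell))$ by comparison and duality (both injective on subspaces, since pullback along finite covers of curves is injective on $H^1$ with $\Q_\ell$-coefficients), and then run triviality on the single Grassmanian back through the injectivity of $\widehat\Psi_\ell$ to triviality on $L(\hP_{g,n})$, where Theorem~\ref{faithfulness} concludes.

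There is, however, one step whose justification as written is wrong: the claimed inclusion $\rho_C(G_\K)\subseteq\cGG_{g,n}$ is false in general. A Galois element $\sigma$ sends the conjugacy class of a loop $u_i$ around a puncture to the conjugacy class of $u_i^{\chi(\sigma)}$, where $\chi$ is the cyclotomic character, whereas every element of $\cGG_{g,n}$ sends $u_i$ to a conjugate of $u_i$ itself; for $n\geq 2$ these classes are already distinguished in the abelianization of $\hP_{g,n}$, and for $n\leq 1$ one sees the same obstruction from the fact that Galois rescales the cup-product pairing on $H_{\acute{e}t}^1(\ol{C}\times_\K\QQ,\Q_\ell)$ by the cyclotomic character while $\cGG_{g,n}$ preserves it. So $\rho_C(\sigma)\notin\cGG_{g,n}$ whenever $\chi(\sigma)\neq 1$, i.e.\ for most of $G_\K$; the correct statement is only that $\rho_C(G_\K)$ normalizes $\cGG_{g,n}$ and acts on $\hL_{g,n}$ and $L(\hP_{g,n})$, which is the content of Theorem~\ref{faithfulness}. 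The flaw is repairable without changing your architecture: the equivariance of $\Psi_{K,\ell}$ you need does not require the Galois image to lie in $\cGG_{g,n}$, because the construction of Definition~\ref{image} is natural under \emph{every} automorphism of $\hP_{g,n}$ preserving the (characteristic) subgroup $K$; the cyclotomic twist replaces the generator $\psi_K(\gm^{\nu_K(\gm)})$ by a $\ZZ^\ast$-multiple of the image of the corresponding generator of $\sigma(\sg)$, and a unit multiple spans the same $\Z_\ell$- or $\Q_\ell$-submodule, so $\sigma(V_{K,\sg})=V_{K,\sigma(\sg)}$ still holds. With that substitution your argument goes through and coincides with the paper's intended proof.
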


\vspace{1cm}

\noindent Marco Boggi,\\ Departamento de Matem\'atica, UFMG, \\
Av. Ant\^onio Carlos, 6627 - Caixa Postal 702 \\ 
CEP 31270-901 - Belo Horizonte - MG, Brasil.
\\
E--mail:\,\,\, marco.boggi@gmail.com

\vspace{1cm}

\noindent Pavel Zalesskii,\\ Departamento de Matem\'atica, Universidade de Bras\'{\i}lia, \\
70910-900, Bras\'{\i}lia-DF, Brasil.
\\
E--mail:\,\,\, pz@mat.unb.br

\end{document}